\newtheorem{theorem}{Theorem}[section]
\newtheorem{lemma}{Lemma}
\newtheorem{corollary}{Corollary}
\newtheorem{proposition}{Proposition}
\theoremstyle{conjecture}
\newtheorem*{conjecture}{Conjecture}
\theoremstyle{definition}
\newtheorem*{notation}{Notation}
\theoremstyle{remark}
\newtheorem*{remark}{Remark}
\theoremstyle{remarks}
\newtheorem*{remarks}{Remarks}
\theoremstyle{example}
\newtheorem*{example}{Example}
\numberwithin{equation}{section}
\begin{document}
\title{Loops in $SU(2)$ and Factorization}

\author{Doug Pickrell}
\email{pickrell@math.arizona.edu}

\begin{abstract}We discuss a refinement of triangular factorization
for the loop group of $SU(2)$.
\end{abstract}
\maketitle

\setcounter{section}{-1}

\section{Introduction}\label{Introduction}

This paper is a sequel to \cite{P}.  The main purpose of the paper
is to prove functional analytic generalizations of Theorems
\ref{SU(2)theorem1} and \ref{U(2)theorem} below.

Let $L_{fin}SU(2)$ ($L_{fin}SL(2,\mathbb C)$) denote the group
consisting of functions $S^1 \to SU(2)$ ($SL(2,\mathbb C)$,
respectively) having finite Fourier series, with pointwise
multiplication. For example, for $\zeta \in \mathbb C$ and $n\in
\mathbb Z$, the function
$$S^1 \to SU(2):z \to
a(\zeta) \left(\begin{matrix} 1&
\zeta z^{-n}\\
-\bar{\zeta}z^n&1\end{matrix} \right),$$ where $a(\zeta)=(1+\vert
\zeta \vert ^2)^{-1/2}$, is in $L_{fin}SU(2)$. It is known that
$L_{fin}SU(2)$ is dense in $C^{\infty}(S^1,SU(2))$ (Proposition
3.5.3 of \cite{PS}). Also, if $f(z)=\sum f_n z^n$, let $f^*=\sum
\bar f_n z^{-n}$. If $f \in H^0(\Delta)$, then $f^* \in
H^0(\Delta^*)$, where $\Delta$ is the open unit disk, and
$\Delta^*$ is the open unit disk at $\infty$.

\begin{theorem}\label{SU(2)theorem1} Suppose that $k_1 \in L_{fin}SU(2)$. The
following are equivalent:

($a_1$) $k_1$ is of the form
$$k_1(z)=\left(\begin{matrix} a(z)&b(z)\\
-b^*&a^*\end{matrix} \right),\quad z\in S^1,$$ where $a$ and $b$
are polynomials in $z$, and $a(0)>0$.

($b_1$) $k_1$ has a factorization of the form
$$k_1(z)=a(\eta_n)\left(\begin{matrix} 1&-\bar{\eta}_nz^n\\
\eta_nz^{-n}&1\end{matrix} \right)..a(\eta_0)\left(\begin{matrix}
1&
-\bar{\eta}_0\\
\eta_0&1\end{matrix} \right),$$ for some $\eta_j \in \mathbb C$.

($c_1$) $k_1$ has triangular factorization of the form
$$\left(\begin{matrix} 1&0\\
\sum_{j=0}^n \bar y_jz^{-j}&1\end{matrix} \right)\left(\begin{matrix} a_1&0\\
0&a_1^{-1}\end{matrix} \right)\left(\begin{matrix} \alpha_1 (z)&\beta_1 (z)\\
\gamma_1 (z)&\delta_1 (z)\end{matrix} \right),$$ where $a_1>0$ and
the third factor is a polynomial in $z$ which is unipotent upper
triangular at $z=0$.

Similarly, the following are equivalent:

($a_2$) $k_2$ is of the form
$$k_2(z)=\left(\begin{matrix} d^{*}&-c^{*}\\
c(z)&d(z)\end{matrix} \right),\quad z\in S^1,$$ where $c$ and $d$
are polynomials in $z$, $c(0)=0$, and $d(0)>0$.

($b_2$) $k_2$ has a factorization of the form
$$k_2(z)=a(\zeta_n)\left(\begin{matrix} 1&\zeta_nz^{-n}\\
-\bar{\zeta}_nz^n&1\end{matrix}
\right)..a(\zeta_1)\left(\begin{matrix} 1&
\zeta_1z^{-1}\\
-\bar{\zeta}_1z&1\end{matrix} \right),$$ for some $\zeta_j \in
\mathbb C$.

($c_2$) $k_2$ has triangular factorization of the form
$$\left(\begin{matrix} 1&\sum_{j=1}^n \bar x_jz^{-j}\\
0&1\end{matrix} \right)\left(\begin{matrix} a_2&0\\
0&a_2^{-1}\end{matrix} \right)\left(\begin{matrix} \alpha_2 (z)&\beta_2 (z)\\
\gamma_2 (z)&\delta_2 (z)\end{matrix} \right), $$ where $a_2>0$
and the third factor is a polynomial in $z$ which is unipotent
upper triangular at $z=0$.

\end{theorem}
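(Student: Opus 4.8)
The plan is to prove each group of equivalences as a cycle; for the first group, $(c_1)\Rightarrow(a_1)\Rightarrow(b_1)\Rightarrow(c_1)$, and for the second group the argument is entirely analogous, with the roles of $\Delta$ and $\Delta^{*}$ interchanged (concretely, conjugation by $\left(\begin{smallmatrix}0&1\\1&0\end{smallmatrix}\right)$ intertwines the two setups, the condition $c(0)=0$ corresponding to the vanishing of the degree-zero parameter). Throughout write $k_1=\left(\begin{smallmatrix}a&b\\-b^{*}&a^{*}\end{smallmatrix}\right)$ and $\ell_j(\eta)=a(\eta)\left(\begin{smallmatrix}1&-\bar\eta z^{j}\\\eta z^{-j}&1\end{smallmatrix}\right)$, and use freely that $a(\eta)^{2}(1+|\eta|^{2})=1$, that $\det k_1=aa^{*}+bb^{*}=1$, and that two $SU(2)$-valued loops with the same first row on $S^{1}$ coincide. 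The implication $(c_1)\Rightarrow(a_1)$ is then immediate: expanding the factorization gives $a=a_1\alpha_1$ and $b=a_1\beta_1$, polynomials in $z$, with $a(0)=a_1\alpha_1(0)=a_1>0$; in particular $a_1=a(0)$, a normalization I reuse below.

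For $(a_1)\Rightarrow(b_1)$ I would induct on $n:=\max(\deg a,\deg b)$, peeling off the leftmost elementary factor. For $n=0$ this is the elementary factorization of a constant matrix in $SU(2)$ with positive $(1,1)$-entry. For $n\ge1$, set $\eta_n:=-\overline{b_n}/a(0)$, where $b_n$ is the leading coefficient of $b$, and form $\ell_n(\eta_n)^{-1}k_1=a(\eta_n)\left(\begin{smallmatrix}1&\bar\eta_n z^{n}\\-\eta_n z^{-n}&1\end{smallmatrix}\right)k_1$. Its first-row entries are $a(\eta_n)(a-\bar\eta_n z^{n}b^{*})$ and $a(\eta_n)(b+\bar\eta_n z^{n}a^{*})$; their $z^{n}$-coefficients are $a_n-\bar\eta_n\overline{b_0}$ and $b_n+\bar\eta_n\overline{a_0}$, and both vanish for this choice of $\eta_n$ once one knows the relation $a_n\overline{a_0}+b_n\overline{b_0}=0$ — which is just the vanishing of the $z^{n}$-coefficient of $aa^{*}+bb^{*}=1$ (valid since $n\ge1$). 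Thus $\ell_n(\eta_n)^{-1}k_1$ is again of the form $(a_1)$: it is $SU(2)$-valued, its first-row entries are polynomials of degree $\le n-1$, and its $(1,1)$-value at $z=0$ is $a(\eta_n)\big(a(0)^{2}+|b_n|^{2}\big)/a(0)>0$. So the inductive hypothesis applies, inserting trivial factors $\ell_j(0)=I$ whenever the degree drops by more than one.

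The substance is $(b_1)\Rightarrow(c_1)$, which I would prove by induction on $n$, assembling the triangular factorization one elementary factor at a time. The base case is the identity $\ell_0(\eta_0)=\left(\begin{smallmatrix}1&0\\\eta_0&1\end{smallmatrix}\right)\left(\begin{smallmatrix}a(\eta_0)&0\\0&a(\eta_0)^{-1}\end{smallmatrix}\right)\left(\begin{smallmatrix}1&-\bar\eta_0\\0&1\end{smallmatrix}\right)$. For the inductive step, suppose $g=\ell_{m-1}(\eta_{m-1})\cdots\ell_0(\eta_0)$ has triangular factorization $\left(\begin{smallmatrix}1&0\\Y&1\end{smallmatrix}\right)\left(\begin{smallmatrix}a_1&0\\0&a_1^{-1}\end{smallmatrix}\right)u$ with $Y$ a polynomial in $z^{-1}$ of degree $\le m-1$, $a_1>0$, and $u$ a polynomial in $z$ that is unipotent upper triangular at $0$. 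Write $\ell_m(\eta)g=a(\eta)\,M\left(\begin{smallmatrix}a_1&0\\0&a_1^{-1}\end{smallmatrix}\right)u$ with $M=\left(\begin{smallmatrix}1&-\bar\eta z^{m}\\\eta z^{-m}&1\end{smallmatrix}\right)\left(\begin{smallmatrix}1&0\\Y&1\end{smallmatrix}\right)$, and first triangularly factor $a(\eta)M=\left(\begin{smallmatrix}1&0\\Y''&1\end{smallmatrix}\right)\left(\begin{smallmatrix}a(\eta)&0\\0&a(\eta)^{-1}\end{smallmatrix}\right)v$ by taking $Y'':=P_{\le0}\big[(\eta z^{-m}+Y)(1-\bar\eta z^{m}Y)^{-1}\big]$, where $P_{\le0}$ retains the powers $z^{0},z^{-1},z^{-2},\dots$; here $1-\bar\eta z^{m}Y$ is a polynomial in $z$ equal to $1$ at $z=0$, hence invertible as a power series, and the bracketed product has only finitely many negative powers, so $Y''$ is a polynomial in $z^{-1}$ of degree $\le m$ whose $z^{-m}$-coefficient is $\eta$. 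One then checks directly that $v$ is a polynomial in $z$, unipotent upper triangular at $0$, and of determinant $1$, the point being that $\det M=1+|\eta|^{2}=a(\eta)^{-2}$ together with the value $\eta$ just computed force the diagonal factor to be $\operatorname{diag}(a(\eta),a(\eta)^{-1})$ with $a(\eta)>0$. Finally, commuting $v$ past $\operatorname{diag}(a_1,a_1^{-1})$ (absorbing the conjugate $\operatorname{diag}(a_1^{-1},a_1)\,v\,\operatorname{diag}(a_1,a_1^{-1})$ into the third factor along with $u$) and merging the two diagonal factors into $\operatorname{diag}(a(\eta)a_1,(a(\eta)a_1)^{-1})$ yields a triangular factorization of $\ell_m(\eta)g$ of exactly the asserted form, with the $z^{-1}$-degree of the lower factor now $\le m$; this closes the induction.

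The main obstacle is this last induction: one must verify that the explicit $Y''$ above really does clear every nonpositive power from the lower-left entry after the diagonal twist, that the resulting third factor is a genuine polynomial in $z$ (not merely a rational loop), and that the accumulated diagonal entry stays real and positive. Each of these reduces to a finite manipulation of Laurent polynomials together with the identities $a(\eta)^{2}(1+|\eta|^{2})=1$ and $aa^{*}+bb^{*}=1$. One could instead deduce $(a_1)\Rightarrow(c_1)$ from general Birkhoff factorization once membership in the big cell is checked, but the inductive route is self-contained; it is also convenient (though not strictly necessary) to record that the triangular factorization is unique, which holds because $\det u=1$ makes $u$ invertible over $\mathbb C[z]$.
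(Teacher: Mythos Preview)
Your proof is correct and takes a genuinely different route from the paper's.

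The paper reduces the first set of conditions to the second via the outer involution $\sigma$ of (\ref{outer}) (your ``conjugation by $\left(\begin{smallmatrix}0&1\\1&0\end{smallmatrix}\right)$'' is not quite the right intertwiner, since it does not produce the shift that makes $c(0)=0$ correspond to the absence of a degree-zero factor; but this is cosmetic, as your inductive arguments go through verbatim for the second group). For the cycle itself the paper argues $(b_2)\Rightarrow(a_2)\Rightarrow(c_2)\Rightarrow(b_2)$: the step $(a_2)\Rightarrow(c_2)$ is operator-theoretic, showing that the block Toeplitz operator $A(k_2)$ is invertible and then constructing $x^*$ as the solution of a closed-range equation on $H^-$; the step $(c_2)\Rightarrow(b_2)$ appeals to the combinatorial structure of the map $\zeta\mapsto x^*$ (worked out in the Appendix) to see that it can be inverted. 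Your argument, by contrast, is entirely elementary and inductive: you peel off the top elementary factor in $(a_1)\Rightarrow(b_1)$ using the single coefficient identity $a_n\overline{a_0}+b_n\overline{b_0}=0$ coming from $aa^*+bb^*=1$, and you build the triangular factorization in $(b_1)\Rightarrow(c_1)$ by explicitly refactoring after left multiplication by each $\ell_m(\eta)$.

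What each approach buys: your proof is self-contained for $L_{fin}SU(2)$ and avoids any functional analysis or the Appendix formulas, at the cost of being tied to the polynomial setting (the inductive peeling and the power-series inversion of $1-\bar\eta z^mY$ have no obvious analogue for infinite products or non-polynomial loops). The paper's argument is less direct here but is designed so that the key step $(a_2)\Rightarrow(c_2)$ survives unchanged when $k_2$ is merely measurable with the appropriate analyticity, which is exactly what is needed for the $C^s$ and $W^{1/2}$ generalizations in Theorems~\ref{SU(2)theorem1smooth} and~\ref{SU(2)theorem1W}.
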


\begin{remark} The two sets of conditions are equivalent; they are intertwined by
the outer involution $\sigma$ of $LSL(2,\mathbb C)$ given by
\begin{equation}\label{outer}\sigma(\left(\begin{matrix}a&b\\c&d\end{matrix}\right))=
\left(\begin{matrix}d&cz^{-1}\\bz&a\end{matrix}\right).\end{equation}
\end{remark}

This Theorem basically follows from results in \cite{P}, but it is
possible to give a direct argument (not involving Lie theory). We
will present this, and functional analytic generalizations, in
Section \ref{SU(2)case}.

The terminology regarding triangular factorization in the
following theorem is reviewed in Section
\ref{triangularfactorization}.

\begin{theorem}\label{U(2)theorem}
(a) If $\{\eta_i\}$ and $\{\zeta_j\}$ are rapidly decreasing
sequences of complex numbers, then the limits
$$k_1(z)=\lim_{n\to\infty}a(\eta_n)\left(\begin{matrix} 1&-\bar{\eta}_nz^n\\
\eta_nz^{-n}&1\end{matrix} \right)..a(\eta_0)\left(\begin{matrix}
1&
-\bar{\eta}_0\\
\eta_0&1\end{matrix} \right)$$ and
$$k_2(z)=\lim_{n\to\infty}a(\zeta_n)\left(\begin{matrix} 1&\zeta_nz^{-n}\\
-\bar{\zeta}_nz^n&1\end{matrix}
\right)..a(\zeta_1)\left(\begin{matrix} 1&
\zeta_1z^{-1}\\
-\bar{\zeta}_1z&1\end{matrix} \right),$$ exist in
$C^{\infty}(S^1,SU(2))$.

(b) Suppose $g\in C^{\infty}(S^1,SU(2))$. The following are
equivalent:

(i) $g$ has a triangular factorization $g=lmau$, where $l$ and $u$
have $C^{\infty}$ boundary values.

(ii) $g$ has a factorization of the form
$$g(z)=k_1(z)^*\left(\begin{matrix} e^{\chi}&0\\
0&e^{-\chi}\end{matrix}\right)k_2(z),$$ where $\chi \in
C^{\infty}(S^1,i\mathbb R)$, and $k_1$ and $k_2$ are as in (a).

(iii) The Toeplitz operator $A(g)$ and the shifted Toeplitz
operator $A_1(g)$ are invertible.

\end{theorem}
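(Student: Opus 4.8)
The plan is to handle part~(a) by a Banach-algebra estimate, and part~(b) by the cycle $(ii)\Rightarrow(i)\Rightarrow(iii)\Rightarrow(ii)$, the implication $(iii)\Rightarrow(ii)$ being where the real content lies. For part~(a), the $n$-th factor differs from the identity by a $2\times 2$ matrix whose entries are $O(|\eta_{n}|)$ linear combinations of $1$ and $z^{\pm n}$; hence its norm in $H^{s}(S^{1},M_{2}(\mathbb C))$ is at most $C_{s}\,|\eta_{n}|\,(1+n)^{s}$ for every $s\ge 0$. Rapid decrease of $\{\eta_{n}\}$ gives $\sum_{n}|\eta_{n}|(1+n)^{s}<\infty$ for all $s$. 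For $s>1/2$ the algebra $H^{s}(S^{1},M_{2}(\mathbb C))$ is a Banach algebra, so the standard estimate for products of elements near the identity shows that the partial products form a Cauchy sequence in every $H^{s}$; since $C^{\infty}(S^{1})=\bigcap_{s}H^{s}(S^{1})$, they converge there, and the limit lies pointwise in $SU(2)$ because each partial product does and $SU(2)$ is closed. The argument for $k_{2}$ is identical, the extra shifts $z^{-j}$ costing only the same factors $(1+j)^{s}$.

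For $(ii)\Rightarrow(i)$: by Theorem~\ref{SU(2)theorem1}, each finite partial product approximating $k_{1}$ has a triangular factorization of the rigid shape in $(c_{1})$, and similarly for $k_{2}$ via $(c_{2})$; and the diagonal factor $\mathrm{diag}(e^{\chi},e^{-\chi})$ has one too, obtained by splitting $\chi=\chi_{-}+\chi_{0}+\chi_{+}$ into strictly negative, constant (imaginary), and strictly positive Fourier parts and using that scalars commute. One checks that the factorizations of the partial products converge in $C^{\infty}$ as the products do --- e.g.\ by recovering the factors from the Toeplitz operators $A(k_{1}^{(n)})$, which are invertible by Theorem~\ref{SU(2)theorem1} and converge in norm to $A(k_{1})$ with eventually uniformly bounded inverses --- so that $k_{1}$, $k_{1}^{*}$, and $k_{2}$ inherit triangular factorizations with $C^{\infty}$ boundary values. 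The special shapes produced by $(c_{1})$ and $(c_{2})$ (a lower unipotent loop polynomial in $z^{-1}$, times a positive constant diagonal, times a $z$-polynomial that is unipotent upper triangular at $0$, and its $\sigma$-image) are exactly compatible, so the three factorizations of $k_{1}^{*}$, $\mathrm{diag}(e^{\chi},e^{-\chi})$, and $k_{2}$ compose to a triangular factorization $g=lmau$ with $l,u\in C^{\infty}$.

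For $(i)\Rightarrow(iii)$: this is the loop-group form of the classical equivalence between Wiener--Hopf factorization and invertibility of a Toeplitz operator. A factorization $g=lmau$ with $m=I$ (automatic, since $g\in C^{\infty}(S^{1},SU(2))$ is homotopically trivial) produces an explicit bounded inverse of $A(g)$ from $A(l)$, $A(u)$ and the triangular structure, so $A(g)$ is invertible. Applying the outer involution $\sigma$ of \eqref{outer}, $\sigma(g)$ likewise has such a factorization (the Remark following Theorem~\ref{SU(2)theorem1}); since $A_{1}(g)$ is, up to conjugation by a shift, the Toeplitz operator of $\sigma(g)$ on the appropriate Hardy space, it is invertible as well.

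Finally, $(iii)\Rightarrow(ii)$ is the main step and the chief obstacle. Invertibility of $A(g)$ puts $g$ in the big cell and hence gives a triangular factorization $g=lmau$ with $m=I$; invertibility of $A_{1}(g)$ does the same in the $\sigma$-twisted picture, and the two together force the factorization to be compatible with a decomposition $g=k_{1}^{*}\,\mathrm{diag}(e^{\chi},e^{-\chi})\,k_{2}$ in which $\chi\in C^{\infty}(S^{1},i\mathbb R)$ is extracted by solving the abelian (scalar) Riemann--Hilbert problem attached to the diagonal parts of $l$ and $u$, and $k_{1}$, $k_{2}$ are $SU(2)$-valued loops of the two types appearing in Theorem~\ref{U(2)theorem}(a). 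The delicate point is to show that the parameters $\{\eta_{j}\}$, $\{\zeta_{j}\}$ of these loops --- read off recursively by peeling off one elementary factor at a time exactly as in the proofs of $(a_{1})\Rightarrow(b_{1})$ and $(a_{2})\Rightarrow(b_{2})$ --- are rapidly decreasing. This has to be deduced from the smoothness of $g$, equivalently from quantitative off-diagonal decay of the matrix entries of $A(g)^{-1}$ and $A_{1}(g)^{-1}$; it is precisely the functional-analytic refinement that the finite statement Theorem~\ref{SU(2)theorem1} does not address, and I expect it, together with the convergence bookkeeping in $(ii)\Rightarrow(i)$, to account for the bulk of the work.
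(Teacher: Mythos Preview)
Your overall architecture (cycle $(ii)\Rightarrow(i)\Rightarrow(iii)\Rightarrow(ii)$, with part~(a) handled by a Banach-algebra tail estimate) is reasonable, and your treatment of part~(a) and of $(i)\Leftrightarrow(iii)$ is essentially fine, apart from one slip: it is \emph{not} true that $m=I$ for $g\in C^{\infty}(S^{1},SU(2))$. The factor $m$ is the diagonal $\mathrm{diag}(e^{\chi_{0}},e^{-\chi_{0}})$ with $\chi_{0}\in i\mathbb R$, and it is nontrivial in general; homotopy triviality of $g$ does not force it to be $1$. This does not break the implication $(i)\Rightarrow(iii)$, but your statement should be corrected.

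The real divergence from the paper is in the hard direction, which in the paper is organized as $(i)\Rightarrow(ii)$ (your $(iii)\Rightarrow(ii)$ passes through $(i)$ anyway). Here your sketch misidentifies the mechanism. You propose to extract $\chi$ from ``the diagonal parts of $l$ and $u$'', but that is not where $\chi$ lives: the paper reads $\mathrm{Re}(\chi)$ off the \emph{column norms}
\[
|l_{11}|^{2}+|l_{21}|^{2}=a_{0}^{-2}\bigl(|u_{21}|^{2}+|u_{22}|^{2}\bigr)\quad\text{on }S^{1},
\]
an identity which is itself a consequence of $g^{*}g=1$, i.e.\ $l^{*}l=(au)^{-*}(au)^{-1}$. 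From this one solves
\[
\mathrm{Re}(\chi)=\log a_{1}^{-1}+\tfrac12\log\bigl(|l_{11}|^{2}+|l_{21}|^{2}\bigr)^{-1},\qquad a_{1},a_{2}\ \text{via the }S^{1}\text{-averages},
\]
and then $\mathrm{Im}(\chi)$ by the Hilbert transform. The entries of $k_{1},k_{2}$ are then \emph{explicit}: $a=a_{1}e^{\chi}l_{11}^{*}$, $b=a_{1}e^{\chi}l_{21}^{*}$, $c=a_{2}^{-1}e^{\chi}u_{21}$, $d=a_{2}^{-1}e^{\chi}u_{22}$. What closes the argument is not a limiting process but a rigidity statement (the paper's Proposition following Theorem~\ref{maintheorem}): a triangular factorization is completely determined by $m,a,l_{11},l_{21},u_{21},u_{22}$, so once the candidate $k_{1}^{*}\lambda k_{2}$ matches $g$ in those six coordinates, it equals $g$. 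Your proposal lacks any analogue of this determinacy step, and the ``diagonal scalar Riemann--Hilbert'' picture would not produce the right $\chi$.

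For $(ii)\Rightarrow(i)$ the paper also proceeds differently: rather than taking limits of triangular factorizations of partial products, it writes down closed formulas (equations~(\ref{lmatrix})--(\ref{umatrix})) for $l(g)$ and $u(g)$ directly in terms of $\chi$, $x$, $y$, $\alpha_{i},\beta_{i},\gamma_{i},\delta_{i}$. Your limiting argument could be made to work, but it is more laborious and the convergence of the individual factors is exactly what you would have to justify; the explicit formulas sidestep this entirely.

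Finally, you correctly flag the rapid decay of $\{\eta_{j}\},\{\zeta_{j}\}$ as the subtle residual point. Note, however, that the paper does \emph{not} attack this by ``peeling off one elementary factor at a time'' and estimating; the $C^{s}$ statement (Theorem~\ref{maintheorem}) is phrased purely via conditions $(a_{i})$ and $(c_{i})$ of Theorem~\ref{SU(2)theorem1smooth}, and the link to the product parameters goes through the determinant identities of Theorem~\ref{det1} and the $W^{1/2}$ analysis. So the route you anticipate for this last step is not the one actually taken.
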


\begin{remarks} (a) Suppose that $g \in L_{fin}SU(2)$. The $l$ and $u$
factors in (i) are also in $L_{fin}SL(2,\mathbb C)$, but they are
essentially never unitary on $S^1$. On the other hand the factors
$k_j$ in (ii) are unitary, but in general they are not in
$L_{fin}SU(2)$ [If $k_1,k_2\in L_{fin}SU(2)$, then $\chi$ must be
constant. Since $L_{fin}SU(2)$ is dense in
$C^{\infty}(S^1,SU(2))$, the parameterization in (ii) implies that
generically $g$ will correspond to nonconstant $\chi$.].

(b) There is a generalization of this Theorem with $U(2)$ in place
of $SU(2)$, where one restricts to loops in the identity
component. We will restrict our attention to $SU(2)$, to simplify
the exposition.
\end{remarks}

The outline of the paper is the following. Section
\ref{triangularfactorization} is a review of standard facts about
triangular factorization.

In Sections \ref{SU(2)case} and \ref{SU(2)caseII}, we prove
Theorems \ref{SU(2)theorem1} and \ref{U(2)theorem}, respectively.
In these two sections, the main point is to extend the
equivalences above to other function spaces, especially the
critical Sobolev space $W^{1/2,L^2}$; see Theorems
\ref{SU(2)theorem1W} and \ref{maintheoremW}.

It seems possible that there are $L^2$ generalizations of these
theorems. This is briefly discussed in Section \ref{Summary}. In
the Appendix we discuss the combinatorial relation between $x^*$
and $\zeta$ in Theorem \ref{SU(2)theorem1}. This relation is
central to the $L^2$ question, and applications. Unfortunately
this relation remains mysterious to me.

The generalization of the algebraic aspects of this paper from
$SU(2)$ to general simply connected compact groups is known
(\cite{P},\cite{PP}), but considerably more complicated. For
$SU(2)$ it suffices to consider one representation, the defining
representation, which greatly simplifies everything.

\begin{notation} Sobolev spaces will be denoted by $W^s$, and will
always be understood in the $L^2$ sense. The space of sequences
satisfying $\sum n\vert \zeta_n\vert^2<\infty$ will be denoted by
$w^{1/2}$. We will write $Meas(S^1,SU(2))$ (rather than
$L^{\infty}(S^1,SU(2))$) for the group of (equivalence classes of)
measurable maps. This group is usually equipped with the topology
of convergence in measure, but this will not play a role in this
paper.
\end{notation}

We will use \cite{Pe} as a general reference for Hankel and
Toeplitz operators.

\section{Triangular factorization for $LSL(2,\mathbb
C)$}\label{triangularfactorization}

Suppose that $g\in L^1(S^1,SL(2,{\mathbb C}))$. A triangular
factorization of $g$ is a factorization of the form
\begin{equation}\label{factorization}g=l(g)m(g)a(g)u(g),\end{equation}
where
\[l=\left(\begin{array}{cc}
l_{11}&l_{12}\\
l_{21}&l_{22}\end{array} \right)\in H^0(\Delta^{*},SL(2,{\mathbb
C})),\quad l(\infty )=\left(\begin{array}{cc}
1&0\\
l_{21}(\infty )&1\end{array} \right),\] $l$ has a $L^2$ radial
limit, $m=\left(\begin{array}{cc}
m_0&0\\
0&m_0^{-1}\end{array} \right)$, $m_0\in S^1$,
$a(g)=\left(\begin{array}{cc}
a_0&0\\
0&a_0^{-1}\end{array} \right)$, $a_0>0$,
\[u=\left(\begin{array}{cc}
u_{11}&u_{12}\\
u_{21}&u_{22}\end{array} \right)\in H^0(\Delta ,SL(2,{\mathbb
C})),\quad u(0)=\left(\begin{array}{cc}
1&u_{12}(0)\\
0&1\end{array} \right),\] and $u$ has a $L^2$ radial limit. Note
that (\ref{factorization}) is an equality of measurable functions
on $S^1$. A Birkhoff (or Wiener-Hopf, or Riemann-Hilbert)
factorization is a factorization of the form $g=g_-g_0g_+$, where
$g_-\in H^0(\Delta^*,\infty;SL(2,\mathbb C),1)$, $g_0\in
SL(2,\mathbb C)$, $g_+\in H^0(\Delta,0;SL(2,\mathbb C),1)$, and
$g_{\pm}$ have $L^2$ radial limits on $S^1$. Clearly $g$ has a
triangular factorization if and only if $g$ has a Birkhoff
factorization and $g_0$ has a triangular factorization, in the
usual sense of matrices.

\begin{proposition}Birkhoff and triangular factorizations are
unique. \end{proposition}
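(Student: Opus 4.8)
The plan is to reduce the statement to the uniqueness of Birkhoff factorization, since the passage already observes that a triangular factorization is a Birkhoff factorization followed by a triangular factorization of the constant middle term $g_0 \in SL(2,\mathbb C)$, and the latter is the classical Gauss/$LU$-type decomposition of an invertible matrix, whose uniqueness (given the normalizations $l(\infty)$ unipotent lower triangular, $a_0>0$, $m_0 \in S^1$, $u(0)$ unipotent upper triangular) is elementary linear algebra. So the real content is: if $g = g_- g_0 g_+ = \tilde g_- \tilde g_0 \tilde g_+$ are two Birkhoff factorizations with the stated normalizations and $L^2$ radial limits, then $g_- = \tilde g_-$, $g_0 = \tilde g_0$, $g_+ = \tilde g_+$.

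First I would form $\tilde g_-^{-1} g_- = \tilde g_0 \tilde g_+ g_+^{-1} g_0^{-1}$. The left-hand side is, a priori, a matrix-valued function; I would argue that each entry is holomorphic on $\Delta^*$ and, by the normalization at $\infty$, that $\tilde g_-^{-1} g_-$ tends to the identity at $\infty$. The right-hand side is holomorphic on $\Delta$. The subtlety is that these are $H^0$ functions with only $L^2$ boundary values, not continuous functions, so "the two sides agree on $S^1$, hence patch to an entire function" needs the standard Liouville-type argument for Hardy-class functions: a function in $H^1(\Delta)$ that agrees a.e.\ on $S^1$ with a function holomorphic on $\Delta^*$ vanishing at $\infty$ must be constant, and here the constant is forced by the behavior at $\infty$ to be the identity. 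I would get the required integrability of the product of two $H^0$ factors from the Cauchy–Schwarz inequality (each factor has $L^2$ radial limits, so entrywise products lie in $L^1$), which is exactly why the $L^2$ radial-limit hypothesis is in the definition. Concluding $\tilde g_-^{-1} g_- = I$ gives $g_- = \tilde g_-$; then $\tilde g_0 \tilde g_+ = g_0 g_+$, and comparing values at $z=0$ (using $g_+(0) = \tilde g_+(0) = I$) forces $g_0 = \tilde g_0$ and hence $g_+ = \tilde g_+$.

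With Birkhoff uniqueness in hand, I would finish the triangular case: write the middle matrix uniquely as $g_0 = m a u_0 = \tilde m \tilde a \tilde u_0$ with $m,\tilde m$ diagonal unitary, $a,\tilde a$ diagonal positive, $u_0,\tilde u_0$ unipotent upper triangular — this is forced because the $(1,1)$ entry of $g_0$ must be nonzero for such a decomposition to exist, and then the factors are read off in order — absorb $u_0$ into $g_+$ and conclude. I expect the main obstacle to be the first step: making rigorous the claim that two $H^0$ functions (on $\Delta$ and on $\Delta^*$) that agree a.e.\ on the circle define a single holomorphic function on the sphere, when one only has $L^2$ (not continuous, not even $H^1$ a priori) boundary regularity. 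The clean way around it is to observe that each side is a quotient/product of $H^2$ functions, hence lies in the Nevanlinna class (or in $H^1$ after the Cauchy–Schwarz estimate), invoke the fact that an $H^1$ function vanishing a.e.\ on an arc of $S^1$ — or here, matching an anti-holomorphic-side function — is identically zero, and deduce the Fourier coefficients of the difference all vanish. Everything else is bookkeeping with the normalizations.
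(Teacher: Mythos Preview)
Your proposal is correct and is essentially the paper's own argument: form $F=\tilde g_-^{-1}g_-$ on $\Delta^*$ and $F=\tilde g_0\tilde g_+(g_0g_+)^{-1}$ on $\Delta$, use Cauchy--Schwarz on the $L^2$ radial limits to get $F\in L^1(S^1)$, conclude the singularities along $S^1$ are removable, and then Liouville plus the normalization at $\infty$ forces $F=1$. The paper states this in two sentences and does not separately treat the triangular case, since (as you observe) it reduces immediately to Birkhoff plus the elementary Gauss decomposition of $g_0$.
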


\begin{proof} If $g_-g_0g_+=h_-h_0h_+$ are two Birkhoff factorizations,
then the function $F$ equal to
$h_-^{-1}g_-$ for $\vert z\vert \ge 1$ and $(h_0h_+)^{-1}g_0g_+$
for $\vert z\vert \le 1$ is holomorphic on $\mathbb C\setminus
S^1$ and integrable on $S^1$. Integrability implies that the
singularities along $S^1$ are removable. Therefore $F$ is
constant, and the normalization conditions force $F=1$. This
implies uniqueness.
\end{proof}

\begin{remark} In the definition of Birkhoff factorization, if
the $L^2$ condition is replaced by the weaker condition that
$g_{\pm}$ have pointwise radial limits $a.e.$ on $S^1$, then
factorization is not unique. For example
$$\left(\begin{matrix}1&0\\0&1\end{matrix}\right)
=\left(\begin{matrix}\frac{z+1}{z-1}&0\\0&\frac{z-1}{z+1}\end{matrix}\right)
\left(\begin{matrix}-1&0\\0&-1\end{matrix}\right)
\left(\begin{matrix}-\frac{z-1}{z+1}&0\\0&-\frac{z+1}{z-1}\end{matrix}\right)$$
is a factorization in this weaker sense. At least for the purposes
of this paper, $L^2$ appears to be the natural regularity
condition in the definitions of factorization.
\end{remark}

As in \cite{PS}, consider the polarized Hilbert space
\[{\mathcal H}:=L^2(S^1,C^2)={\mathcal H}^{+}\oplus {\mathcal H}^{-},\]
where ${\mathcal H}^{+}=P_{+}{\mathcal H}$ consists of
$L^2$-boundary values of functions holomorphic in $\Delta$. If
$g\in L^{\infty}(S^1,SL(2,\mathbb C))$, we write the bounded
multiplication operator defined by $g$ on $\mathcal H$ as
\[M_g=\left(\begin{array}{cc}
A(g)&B(g)\\
C(g)&D(g)\end{array} \right)\] where $A(g)=P_{+}M_gP_{+}$ is the
(block) Toeplitz operator associated to $g$ and so on. If $g$ has
the Fourier expansion $g=\sum g_nz^n$,
$g_n=\left(\begin{matrix}a_n&b_n\\c_n&d_n\end{matrix}\right)$,
then relative to the basis for $\mathcal H$:
\begin{equation}\label{basis} ..
\epsilon_1z,\epsilon_2z,\epsilon_1,\epsilon_2,\epsilon_1z^{-1},\epsilon_2z^{-1},..\end{equation}
the matrix of $M_g$ is block periodic of the form
$$\begin{array}{ccccccccc}&.&.&.&.&.&.&.&\\..&a_0&b_0&a_1&b_1&\vert&a_2&b_2&..
\\..&c_0&d_0&c_1&d_1&\vert&c_{2}&d_{2}&..\\
..&a_{-1}&b_{-1}&a_0&b_0&\vert&a_1&b_1&.. \\
..&c_{-1}&d_{-1}&c_0&d_0&\vert&c_1&d_1&..\\-&-&-&-&-&-&-&-&-\\
..&a_{-2}&b_{-2}&a_{-1}&b_{-1}&\vert&a_0&b_0&..\\
..&c_{-2}&d_{-2}&c_{-1}&d_{-1}&\vert&c_0&d_0&..\\&.&.&.&.&.&.&.&
\end{array}$$
From this matrix form, it is clear that, up to equivalence, $M_g$
has just two types of ``principal minors", the matrix representing
$A(g)$, and the matrix representing the shifted Toeplitz operator
$A_1(g)$, the compression of $M_g$ to the subspace spanned by
$\{\epsilon_iz^j:i=1,2,j>0\}\cup\{\epsilon_1\}$. Relative to the
basis (\ref{basis}), the involution $\sigma$ defined by
(\ref{outer}) is equivalent to conjugation by the shift operator,
i.e. the matrix of $M_{\sigma(g)}$ is obtained from the matrix for
$M_g$ by shifting one unit along the diagonal (in either
direction: the result is the same, because $M_g$ commutes with
$M_z$, the square of the shift operator). Consequently the shifted
Toeplitz operator is equivalent to the operator $A(\sigma(g))$.

\begin{theorem}\label{factorization} Suppose that $g\in L^{\infty}(S^1,SL(2,\mathbb C))$.

(a) If $A(g)$ is invertible, then $g$ has a Birkhoff
factorization, where
\begin{equation}\label{factorformula}(g_0g_+)^{-1}=[A(g)^{-1}\left(\begin{matrix}1\\0\end{matrix}\right),
A(g)^{-1}\left(\begin{matrix}0\\1\end{matrix}\right)].\end{equation}

(b) If $A(g)$ and $A_1(g)$ are invertible, then $g$ has a
triangular factorization.

\end{theorem}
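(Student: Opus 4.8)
The plan is to build the Birkhoff factorization of $g$ directly from the Toeplitz operator $A(g)$ and then reduce the triangular factorization to the matrix case. For part (a), the key observation is that invertibility of $A(g)$ lets us solve the Riemann--Hilbert problem componentwise. Writing $h=(g_0g_+)^{-1}$, equation (\ref{factorformula}) \emph{defines} the two columns of $h$ as $A(g)^{-1}$ applied to the constant vectors $\binom{1}{0}$ and $\binom{0}{1}$; these lie in $\mathcal H^+$, so $h\in H^0(\Delta,SL(2,\mathbb C))$ provided we check $\det h$ is a nonvanishing holomorphic function (it is constant by the uniqueness-type argument below, hence we can normalize). The real content is to verify that $gh^{-1}$ extends holomorphically across $S^1$ to $\Delta^*$ with the right behavior at $\infty$. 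I would argue as follows: by construction $P_+(gh e_i)=e_i$ for $i=1,2$, i.e. $g h$ differs from a constant only by something in $\mathcal H^-$; equivalently $M_g h - (\text{const})\in \mathcal H^-\otimes\mathbb C^2$, which says precisely that $(g_0g_+)g$ has boundary values in $H^0(\Delta^*)$. Unraveling, $g_- := g(g_0g_+) = g h^{-1}$ lies in $H^0(\Delta^*,SL(2,\mathbb C))$, and the normalization $h(0)=$ (lower triangular)$^{-1}$ built into (\ref{factorformula}) (after absorbing $g_0$) forces $g_-(\infty)$ to be unipotent lower triangular. The $L^2$ radial limits are automatic since $h, h^{-1}\in\mathcal H^+$ are genuine $L^2$ functions and $g\in L^\infty$.

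For part (b), given part (a) we already have $g=g_-g_0g_+$, so by the remark preceding Proposition~\ref{...} (``$g$ has a triangular factorization iff $g$ has a Birkhoff factorization and $g_0$ has a triangular factorization as a matrix''), it suffices to show that the constant matrix $g_0\in SL(2,\mathbb C)$ admits a triangular (i.e. $LDU$, or Gauss) factorization, which for a $2\times 2$ matrix is exactly the condition that its $(1,1)$ entry is nonzero. So the task is to extract $g_0$ from the operator data and show $(g_0)_{11}\neq 0$ is equivalent to (or follows from) invertibility of the shifted Toeplitz operator $A_1(g)$. The idea: $A_1(g)$ is, up to the shift equivalence described in the text, the Toeplitz operator $A(\sigma(g))$ of the $\sigma$-rotated loop. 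Applying part (a) to $\sigma(g)$ produces a Birkhoff factorization $\sigma(g)=\tilde g_-\tilde g_0\tilde g_+$, and by uniqueness of Birkhoff factorization together with the explicit form (\ref{outer}) of $\sigma$, the matrix $\tilde g_0$ is a fixed $SL(2,\mathbb C)$-conjugate (or simple transform) of $g_0$ — concretely $\tilde g_0$ records the ``other principal minor,'' which is the anti-diagonal block of $g_0$. Hence invertibility of both $A(g)$ and $A_1(g)$ gives Birkhoff factorizations of both $g$ and $\sigma(g)$, and comparing the two constant middle terms via uniqueness shows that $g_0$ has both a nonzero $(1,1)$ entry and a nonzero anti-diagonal entry; the former is all we need for $g_0=lmau$ at the matrix level, and we assemble $g=(g_-l)(m)(a)(u g_+)$, checking that $g_-l\in H^0(\Delta^*)$ is still unipotent lower triangular at $\infty$ and $ug_+\in H^0(\Delta)$ is still unipotent upper triangular at $0$.

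The main obstacle I anticipate is the bookkeeping in part (b): making precise how $A_1(g)$ invertible translates into a statement about $g_0$. One must be careful that ``$A_1(g)$ equivalent to $A(\sigma(g))$'' is an equivalence of operators on isomorphic (but not identical) Hilbert spaces, so the resulting factorization of $\sigma(g)$ has to be pushed back through $\sigma^{-1}$, and $\sigma$ does not act by a constant conjugation — it involves the $z^{\pm 1}$ twist in (\ref{outer}), which shuffles the Birkhoff type. The clean way to handle this is to note $\sigma(g_-g_0g_+)=\sigma(g_-)\sigma(g_0)\sigma(g_+)$ would be the factorization \emph{if} $\sigma$ preserved the Birkhoff normalization, but it doesn't: $\sigma$ of a unipotent-lower-triangular constant at $\infty$ picks up a $z$, which must be moved into the $g_-$ or $g_+$ factor. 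Tracking exactly which entry of $g_0$ controls whether this move is possible is the crux, and it is precisely there that the hypothesis on $A_1(g)$ enters. Once that is pinned down, everything else is the routine verification that products of $H^0(\Delta^*)$ (resp. $H^0(\Delta)$) functions with the triangular constant matrices stay in the right Hardy class with the right normalization, and that $L^\infty\cdot L^2\subset L^2$ keeps the radial-limit hypotheses intact.
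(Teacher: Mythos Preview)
Your treatment of part~(a) is essentially the paper's argument, with a couple of bookkeeping slips: since $h=(g_0g_+)^{-1}$, one has $g_-=gh$, not $g h^{-1}$; and ``$(g_0g_+)g$ has boundary values in $H^0(\Delta^*)$'' should read $g(g_0g_+)^{-1}=gh$. The $\det h=1$ step is exactly the removable--singularity argument you gesture at; the paper writes it out: $\det M\in L^1(S^1)$ is holomorphic on $\Delta$, and on $S^1$ it equals a function holomorphic on $\Delta^*$ with value $1$ at $\infty$ (because $gM=1+O(z^{-1})$ and $\det g=1$), hence extends to all of $\hat{\mathbb C}$ and is identically $1$. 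No normalization is needed.

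For part~(b) your approach diverges from the paper's and has a real gap. The paper does not pass through $\sigma$ at all. Having $g=g_-g_0g_+$ from part~(a), it simply writes out the matrix of $M_{g_0g_+}$ (block upper triangular with copies of $g_0$ on the diagonal) and of $M_{g_-}$ (unipotent lower triangular), and observes that $A_1(g)=A_1(g_-)A_1(g_0g_+)$ with $A_1(g_-)$ unipotent lower triangular, hence invertible; the compression $A_1(g_0g_+)$ is then block upper triangular with diagonal blocks $g_0,g_0,\ldots$ and a final scalar block $(g_0)_{11}$, so it is invertible iff $(g_0)_{11}\ne 0$. That is the whole argument.

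Your route via $A_1(g)\simeq A(\sigma(g))$ is plausible in spirit, but the claim it rests on is false: the middle term $\tilde g_0$ of the Birkhoff factorization of $\sigma(g)$ is \emph{not} a fixed $SL(2,\mathbb C)$-transform of $g_0$; it depends on the full loops $g_\pm$. Concretely, if $\alpha=(g_0)_{11}\ne 0$ one finds $(\tilde g_0)_{11}=1/\alpha$, but the off-diagonal entries of $\tilde g_0$ involve the first Taylor/Laurent coefficients of $g_\pm$ (they enter through $\sigma(g_\pm)(0)$ and $\sigma(g_\pm)(\infty)$, which are only unipotent triangular, not the identity). So ``comparing the two constant middle terms via uniqueness'' does not by itself yield $(g_0)_{11}\ne 0$. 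To close the gap you would have to show that when $\alpha=0$ the loop $\sigma(g_-)\sigma(g_0)\sigma(g_+)$ has nontrivial Birkhoff type, which amounts to the same block-triangular matrix computation the paper carries out directly.
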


\begin{proof} For part (a), let $M$ denote the $2\times 2$ matrix
valued loop on the right hand side of (\ref{factorformula}). The
columns of this matrix are in $\mathcal H^+$. We must check that
$det(M)=1$ on $\Delta$. Because the entries of $M$ are in
$L^2(S^1)$, $det(M)\in L^1(S^1)$. Because $det(g)=1$ on $S^1$, and
$gM=1+O(z^{-1})$, $det(M)$ is holomorphic on $\Delta$, and on
$S^1$ equals a function which is  holomorphic in $\Delta^*$ and
equal to 1 at $\infty$. Consequently $det(M)$ has a holomorphic
extension to all of $\hat{\mathbb C}$, and hence must be
identically $1$. We can now take $g_0g_+=M^{-1}$. This will have
$L^2$ entries, because $M$ is unimodular.

For part (b), suppose that $g$ has Birkhoff factorization
$g=g_-g_0g_+$, and let
$g_0=\left(\begin{matrix}\alpha&\beta\\\gamma&\delta\end{matrix}\right)$.
The matrix representing $M_{g_0g_+}$ has the form
$$\begin{array}{ccccccccc}&.&.&.&.&.&.&.&\\..&\alpha&\beta&*&*&\vert&*&*&..
\\..&\gamma&\delta&*&*&\vert&*&*&..\\
..&0&0&\alpha&\beta&\vert&*&*&.. \\
..&0&0&\gamma&\delta&\vert&*&*&..\\-&-&-&-&-&-&-&-&-\\
..&0&0&0&0&\vert&\alpha&\beta&..\\
..&0&0&0&0&\vert&\gamma&\delta&..\\&.&.&.&.&.&.&.&
\end{array}$$
The matrix representing $M_{g_-}$ is unipotent and lower
triangular. Consequently $A_1(g)=A_1(g_-)A_1(g_0g_+)$, $A_1(g_-)$
is unipotent lower triangular, and $A_1(g)$ is invertible iff
$A_1(g_0g_+)$ is invertible iff $\alpha=(g_0)_{11}\ne 0$. This
implies Part (b).

\end{proof}

In Theorem \ref{factorization} we are assuming that $g$ is
bounded. It is not generally true that the factors $g_{\pm}$ are
bounded. Recall (see \cite{CG}) that a Banach $*$-algebra $\mathbb
A \subset L^{\infty}(S^1)$ is said to be decomposing if
$$\mathbb A = \mathbb A_+ \oplus \mathbb A_-,$$ i.e. $P_+:\mathbb A
\to \mathbb A_+$ is continuous. For example $C^s(S^1)$ is
decomposing, provided $s>0$ and nonintegral (see page 60 of
\cite{CG}), and $W^s$ is a decomposing algebra, provided $s>1/2$
(Note: $W^{1/2}$ is not an algebra).

\begin{corollary}\label{decomposing} Suppose that $g \in L^{\infty}(S^1,SL(2,\mathbb
C))$ belongs to a decomposing algebra $\mathbb A$ and has a
Birkhoff factorization. Then the factors $g_{\pm}$ belong to
$\mathbb A$.
\end{corollary}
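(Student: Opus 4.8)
This is essentially the standard theorem on factorization in decomposing algebras (\cite{CG}); the plan is to reduce it to a single invertibility statement for a Toeplitz operator and then settle that by Fredholm theory. Since $\det g\equiv1$, $g^{-1}=\mathrm{adj}(g)\in M_2(\mathbb{A})$, so it suffices to prove $(g_0g_+)^{-1}\in M_2(\mathbb{A})$: then $g_0g_+=\mathrm{adj}\bigl((g_0g_+)^{-1}\bigr)\in M_2(\mathbb{A})$, hence $g_+=g_0^{-1}(g_0g_+)\in M_2(\mathbb{A})$ and $g_-=g(g_0g_+)^{-1}\in M_2(\mathbb{A})$, and since these factors are holomorphic in $\Delta$, resp.\ $\Delta^{*}$, they lie in the corresponding summand of $\mathbb{A}$. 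The presence of a Birkhoff factorization makes $A(g)$ invertible on $\mathcal{H}^{+}$ (the converse half of Theorem \ref{factorization}(a); concretely, $A(g)=A(g_-)A(g_0)A(g_+)$ by the multiplicativity of $A(\cdot)$ when the left factor is anti-holomorphic or the right factor is holomorphic, and each of the three factors is invertible). By (\ref{factorformula}) the columns of $(g_0g_+)^{-1}$ are the images under $A(g)^{-1}$ of the two constant $\mathbb{C}^{2}$-valued functions, so the whole point is to show that $A(g)^{-1}$ carries the analytic part $\mathbb{A}_+^{\oplus2}$ of $\mathbb{A}^{\oplus2}$ into itself.

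Since $\mathbb{A}$ is decomposing, multiplication by $g$ preserves $M_2(\mathbb{A})$ and $P_+$ is bounded, so $T_g:=P_+M_g$ is a bounded operator on $\mathbb{A}_+^{\oplus2}$; as it is the restriction of $A(g)$, it is injective. Using $g^{-1}\in M_2(\mathbb{A})$ one gets on $\mathbb{A}_+^{\oplus2}$ the identities $T_gT_{g^{-1}}=I-P_+M_gP_-M_{g^{-1}}P_+$ and $T_{g^{-1}}T_g=I-P_+M_{g^{-1}}P_-M_gP_+$, whose error terms are products of Hankel-type operators $P_{\pm}M_{\phi}P_{\mp}$ with $\phi\in M_2(\mathbb{A})$. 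For the decomposing algebras relevant here ($C^{s}$ with $0<s\notin\mathbb{Z}$, $W^{s}$ with $s>1/2$, and more generally whenever the Fourier coefficients of elements of $\mathbb{A}$ decay fast enough) these Hankel operators are compact on $\mathbb{A}_+^{\oplus2}$, being norm limits of their finite-rank Fourier truncations. Hence $T_g$ is Fredholm on $\mathbb{A}_+^{\oplus2}$, with the same index as $A(g)$ on $\mathcal{H}^{+}$, namely $0$.

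An injective Fredholm operator of index $0$ is invertible, so $T_g$ is invertible on $\mathbb{A}_+^{\oplus2}$, and by injectivity of $A(g)$ its inverse is the restriction of $A(g)^{-1}$; thus $A(g)^{-1}$ preserves $\mathbb{A}_+^{\oplus2}$, $(g_0g_+)^{-1}\in M_2(\mathbb{A})$, and the first paragraph finishes the proof. (For a completely general decomposing algebra, where compactness of the Hankel operators need not hold, the statement is still true but the argument is more delicate; see \cite{CG}.) The main obstacle is exactly this step — promoting injectivity of $T_g$ to invertibility, i.e.\ showing that division by $g$ stays inside $\mathbb{A}$; the remaining bookkeeping is formal. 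An alternative would be to construct a Birkhoff factorization with $\mathbb{A}$-valued factors directly and invoke uniqueness of Birkhoff factorization (the Proposition above), but this reduces once more to the invertibility of $T_g$ on $\mathbb{A}_+^{\oplus2}$.
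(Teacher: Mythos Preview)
Your overall route---use formula (\ref{factorformula}) to express $(g_0g_+)^{-1}$ via $A(g)^{-1}$, then show $A(g)^{-1}$ preserves the analytic part of $\mathbb{A}$---is exactly the paper's, only spelled out in far greater detail (the paper's proof is a single sentence invoking continuity of $P_+$ and (\ref{factorformula})).

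There is, however, a genuine gap in your justification that $A(g)$ is invertible on $\mathcal{H}^+$. You write $A(g)=A(g_-)A(g_0)A(g_+)$ and claim each factor is invertible, but the hypothesis ``$g$ has a Birkhoff factorization'' only gives $g_{\pm}$ with $L^2$ radial limits, not $L^{\infty}$; so $M_{g_{\pm}}$ need not be bounded on $\mathcal{H}$ and the operator identity you write does not make sense. Indeed, Remark~\ref{perspective}(a) of the paper explicitly warns that a Birkhoff factorization of a bounded $g$ does \emph{not} in general imply invertibility of $A(g)$; there is no ``converse half'' of Theorem~\ref{factorization}(a) to cite.

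The repair is easy for the algebras you actually treat. Since your Hankel operators $P_{\pm}M_{g^{\pm1}}P_{\mp}$ are compact on $\mathcal{H}$ (this is precisely the hypothesis $\mathbb{A}\subset QC$), Theorem~\ref{hartmann} gives that $A(g)$ is Fredholm of index $0$ on $\mathcal{H}^+$; combined with the injectivity supplied by Remark~\ref{perspective}(a), $A(g)$ is invertible. With that in place, the rest of your argument (injectivity of $T_g$, Fredholm index $0$ on $\mathbb{A}_+^{\oplus2}$, hence invertibility, hence $T_g^{-1}$ is the restriction of $A(g)^{-1}$) goes through. Alternatively, you can bypass invertibility on $\mathcal{H}^+$ entirely: once $T_g$ is invertible on $\mathbb{A}_+^{\oplus2}$, set $M=[T_g^{-1}\epsilon_1,\,T_g^{-1}\epsilon_2]\in M_2(\mathbb{A}_+)$, rerun the determinant argument of Theorem~\ref{factorization}(a) to get a Birkhoff factorization with $\mathbb{A}$-valued factors, and invoke uniqueness.
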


This follows from the continuity of $P_+$ on $\mathbb A$ and the
formula in (a) of Theorem \ref{factorization}.

\begin{theorem}\label{hartmann}If $g \in L^{\infty}(S^1,SL(2,\mathbb
C))$, then $B(g)$ and $C(g)$ are compact operators if and only if
$g\in VMO$, the space of functions with vanishing mean
oscillation. If $g\in QC:=L^{\infty}\cap VMO$, then $A(g)$ and
$D(g)$ are Fredholm of index $0$.
\end{theorem}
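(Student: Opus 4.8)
The plan is to reduce both assertions to classical scalar theorems about Hankel and Toeplitz operators, applied entrywise to the $2\times 2$ symbol $g$.

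First I would observe that $B(g)$ and $C(g)$ are block Hankel operators. Writing $g=(g_{ij})$, the operator $C(g)=P_-M_gP_+$ is the $2\times 2$ array of scalar Hankel operators with symbols $g_{ij}$, so by Hartman's theorem (\cite{Pe}) it is compact if and only if every $g_{ij}\in H^\infty+C(S^1)$. For $B(g)=P_+M_gP_-$ I would use the identity $B(g)^*=P_-M_{g^*}P_+=C(g^*)$, where $g^*(z)$ denotes the pointwise conjugate transpose; hence $B(g)$ is compact if and only if every $\overline{g_{ij}}\in H^\infty+C$, i.e.\ every $g_{ij}\in\overline{H^\infty+C}$. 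Thus $B(g)$ and $C(g)$ are simultaneously compact precisely when every entry of $g$ lies in $(H^\infty+C)\cap\overline{H^\infty+C}$, which by Sarason's theorem is $QC=L^\infty\cap VMO$. Since membership of a matrix-valued function in $VMO$ means each entry lies in $VMO$, this is the first assertion.

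Now suppose $g\in QC$. Because $g$ takes values in $SL(2,\mathbb C)$, the entries of $g^{-1}$ are (signed) entries of $g$, so $g^{-1}\in QC$ and $M_g$ is invertible with $M_g^{-1}=M_{g^{-1}}$. By the first part $B(g)$ and $C(g)$ are compact, so modulo the ideal of compact operators $M_g\equiv A(g)\oplus D(g)$; an invertible operator that is block diagonal modulo compacts has both diagonal blocks Fredholm, so $A(g)$ and $D(g)$ are Fredholm and $\operatorname{ind}A(g)+\operatorname{ind}D(g)=\operatorname{ind}M_g=0$. To see that the indices vanish separately, I would invoke the index formula for block Toeplitz operators with $QC$ symbol, $\operatorname{ind}A(g)=-\operatorname{ind}(\det g)$ (\cite{Pe}); here $\det g\equiv 1$, so $\operatorname{ind}A(g)=0$. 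For $D(g)=P_-M_g|_{\mathcal H^-}$ I would conjugate by the unitary involution $V$ on $\mathcal H$ given by $V(z^k)=z^{-k-1}$ (acting diagonally on $\mathbb C^2$), which interchanges $\mathcal H^+$ and $\mathcal H^-$ and satisfies $VM_gV=M_{\check g}$ with $\check g(z)=g(z^{-1})$; then $D(g)$ is unitarily equivalent to $A(\check g)$, and $\det\check g\equiv 1$ gives $\operatorname{ind}D(g)=0$ as well.

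The compactness half is entirely routine once Hartman's and Sarason's theorems are in hand. The one step that genuinely needs care is the index computation: one must either cite the $QC$-symbol Toeplitz index theorem in a form valid for $2\times 2$ matrix symbols, or else replace it by a homotopy argument — connecting $g$ to a constant loop inside $QC(S^1,SL(2,\mathbb C))$ and using that $g\mapsto A(g)$ is norm continuous while the Fredholm index is locally constant — and establishing that the relevant path-connectedness is itself somewhat delicate.
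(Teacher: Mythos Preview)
The paper does not supply its own proof of this theorem; immediately after the statement it simply attributes the first assertion to Hartman and the second to Douglas, with page references to Peller's book \cite{Pe}. Your proposal is a correct unpacking of what those citations contain: the entrywise reduction of $B(g)$ and $C(g)$ to scalar Hankel operators, Hartman's compactness criterion, and Sarason's identification $(H^\infty+C)\cap\overline{(H^\infty+C)}=L^\infty\cap VMO$ together yield the first assertion, and the Fredholm argument via $A(g)A(g^{-1})=I-B(g)C(g^{-1})=I+\text{compact}$ is the standard route to the second.

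Your own caveat about the index-zero step is appropriate and honest. The relation $A(g)A(g^{-1})\equiv I$ modulo compacts gives only $\operatorname{ind}A(g)+\operatorname{ind}A(g^{-1})=0$, so one genuinely needs either the matrix $QC$ index formula $\operatorname{ind}A(g)=-\operatorname{ind}\dot A(\det g)$ (which for $\det g\equiv 1$ gives zero) or a homotopy through Fredholm symbols; both ultimately rest on Douglas-type results rather than being elementary, which is entirely consistent with the paper's decision to cite rather than prove. Your reduction of $D(g)$ to $A(\check g)$ via the flip $z^k\mapsto z^{-k-1}$ is a clean way to handle the remaining block.
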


The first statement is due to Hartmann, and the second to Douglas
(see pages 27 and 108 of \cite{Pe}, respectively).

\begin{remarks}\label{perspective} (a) In the context of Theorem \ref{factorization},
if $g$ has a Birkhoff factorization, then $A(g)$ is $1-1$: for if
$h\in \mathcal H_+$, then there is a Hardy decomposition of (not
necessarily $L^2$) $\mathbb C^2$ valued functions
$$ g_-^{-1}(M_g h)_+=g_0g_+h-g_-^{-1}(M_g h)_-;$$ thus if $A(g)h=0$, then
$h=0$. A Birkhoff factorization for bounded $g$ does not imply
$A(g)$ is invertible (see Theorem 5.1, page 109 of \cite{Pe}).

(b) For $g\in QC(S^1,SL(2,\mathbb C))$, the converse in (a) (and
also (b)) of Theorem \ref{factorization} holds, because the
Fredholm index of $A(g)$ vanishes. Moreover there is a notion of
generalized triangular factorization for all $g$ (see \cite{CG}
and chapter 8 of \cite{PS}).

(c) Theorem \ref{hartmann} implies that the Toeplitz operator
defines a holomorphic map
$$ QC(S^1,SL(2,\mathbb C))\to Fred(\mathcal H_+):g\to A(g).$$
There is a determinant line bundle $Det \to Fred(\mathcal H_+)$
with canonical section, $A \to det(A)$, which is nonvanishing
precisely when $A$ is invertible. In the notation of \cite{P},
$\sigma_0=det(A(\tilde g))$ is the pullback of the canonical
section, and $\sigma_1=det(A(\sigma(\tilde g)))$, viewed as
holomorphic functions of $\tilde g$ in the universal $\mathbb C^*$
extension of $QC(S^1,SL(2,\mathbb C))$. If $g$ has a triangular
factorization, then
\begin{equation}\label{diagonal}m(g)a(g)=
\left(\begin{matrix}\sigma_1/\sigma_0&0\\0&\sigma_0/\sigma_1\end{matrix}\right),\end{equation}
as the matrix manipulations above suggest (see (1.5)-(1.6) of
\cite{P}).

\end{remarks}

\section{Proof of Theorem \ref{SU(2)theorem1}, and Generalizations
to Other Function Spaces}\label{SU(2)case}

In the course of proving Theorem \ref{SU(2)theorem1}, we will also
prove the following

\begin{theorem}\label{SU(2)theorem1smooth} Suppose that $k_1 \in C^s(S^1,SU(2))$,
where $s>0$ and nonintegral. The following are equivalent:

($a_1$) $k_1$ is of the form
$$k_1(z)=\left(\begin{matrix} a(z)&b(z)\\
-b^*&a^*\end{matrix} \right),\quad z\in S^1,$$ where $a,b\in
H^0(\Delta)$ have $C^s$ boundary values, $a(0)>0$, and $a$ and $b$
do not simultaneously vanish at a point in $\Delta$.

($c_1$) $k_1$ has triangular factorization of the form
$$\left(\begin{matrix} 1&0\\
\sum_{j=0}^ny^*_jz^{-j}&1\end{matrix} \right)\left(\begin{matrix} a_1&0\\
0&a_1^{-1}\end{matrix} \right)\left(\begin{matrix} \alpha_1 (z)&\beta_1 (z)\\
\gamma_1 (z)&\delta_1 (z)\end{matrix} \right),$$ where the factors
have $C^s$ boundary values.

Similarly, the following are equivalent:

($a_2$) $k_2$ is of the form
$$k_2(z)=\left(\begin{matrix} d^{*}&-c^{*}\\
c(z)&d(z)\end{matrix} \right),\quad z\in S^1,$$ where $c,d\in
H^0(\Delta)$ have $C^s$ boundary values, $c(0)=0$, $d(0)>0$, and
$c$ and $d$ do not simultaneously vanish at a point in $\Delta$.

($c_2$) $k_2$ has triangular factorization of the form
$$\left(\begin{matrix} 1&\sum_{j=1}^nx^*_jz^{-j}\\
0&1\end{matrix} \right)\left(\begin{matrix} a_2&0\\
0&a_2^{-1}\end{matrix} \right)\left(\begin{matrix} \alpha_2 (z)&\beta_2 (z)\\
\gamma_2 (z)&\delta_2 (z)\end{matrix} \right), $$ where the
factors have $C^s$ boundary values.

\end{theorem}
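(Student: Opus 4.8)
The plan is to prove the two chains of equivalences in parallel, using the outer involution $\sigma$ of \eqref{outer} to deduce the $k_2$-statements from the $k_1$-statements (as in the Remark following Theorem \ref{SU(2)theorem1}), so I would concentrate on showing $(a_1)\Leftrightarrow(c_1)$ for $k_1\in C^s(S^1,SU(2))$. The direction $(c_1)\Rightarrow(a_1)$ is the softer one: given the triangular factorization $k_1=l\,m\,a\,u$ with $l$ unipotent lower triangular in $H^0(\Delta^*)$, $m=1$ (since $k_1\in SU(2)$ forces the middle factor to be positive, hence trivial up to the $a$-factor), and $u$ polynomial, I would multiply out and use the constraint $k_1\in SU(2)$, i.e. $k_1^*=k_1^{-1}$ on $S^1$, to read off that $k_1$ has the claimed form $\left(\begin{smallmatrix}a&b\\-b^*&a^*\end{smallmatrix}\right)$; the entries $a,b$ are then visibly boundary values of $H^0(\Delta)$ functions with $C^s$ regularity (using Corollary \ref{decomposing}, since $C^s$ is a decomposing algebra for $s>0$ nonintegral), and the non-simultaneous-vanishing of $a,b$ in $\Delta$ is exactly the statement that $A(k_1)$ — equivalently the scalar Toeplitz-type object built from $a$ — is invertible, which is what makes the Birkhoff/triangular factorization exist.

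For the substantive direction $(a_1)\Rightarrow(c_1)$, I would argue as follows. Assume $k_1=\left(\begin{smallmatrix}a&b\\-b^*&a^*\end{smallmatrix}\right)$ with $a,b\in H^0(\Delta)$, $C^s$ boundary values, $a(0)>0$, and $a,b$ with no common zero in $\Delta$. The key is to show $A(k_1)$ is invertible, so that Theorem \ref{factorization}(a) produces a Birkhoff factorization and then Theorem \ref{factorization}(b) upgrades it to a triangular factorization once one checks $(g_0)_{11}\neq 0$. Because $k_1\in SU(2)\subset QC$, Theorem \ref{hartmann} tells us $A(k_1)$ is Fredholm of index $0$, so invertibility reduces to injectivity; and by Remark \ref{perspective}(a) a Birkhoff factorization already forces $A(k_1)$ to be $1$–$1$, so the real content is producing the Birkhoff factorization directly from the non-vanishing hypothesis. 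Here is where the $2\times 2$ structure pays off: the condition that $a$ and $b$ have no common zero in $\Delta$, together with $a^*a+b^*b=1$ on $S^1$, lets me solve the Riemann–Hilbert problem explicitly — one seeks $g_+\in H^0(\Delta)$ with $k_1 g_+^{-1}\in H^0(\Delta^*)$ and the normalizations, and the ansatz for the first column of $(g_0g_+)^{-1}$ is $A(k_1)^{-1}\left(\begin{smallmatrix}1\\0\end{smallmatrix}\right)$ as in \eqref{factorformula}. Concretely I expect $a$ to be an outer function times a finite Blaschke-type product accounting for its zeros, and the factorization to be assembled from scalar Wiener–Hopf factorizations of $a$ and of the relevant scalar symbols; the $C^s$ regularity of all factors then follows from Corollary \ref{decomposing}. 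Finally, the middle factor being trivial ($m=1$) and $a_1>0$ come from the $SU(2)$ constraint via \eqref{diagonal}: $m(k_1)a(k_1)=\mathrm{diag}(\sigma_1/\sigma_0,\sigma_0/\sigma_1)$, and unitarity forces this to be a positive diagonal matrix, hence $m=1$.

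The main obstacle, I expect, is the regularity bookkeeping at the critical and sub-critical range — ensuring that when $a$ has zeros in $\Delta$ the explicit factors one writes down genuinely land in $C^s$ and not merely in $L^2$ or $L^\infty$. Corollary \ref{decomposing} handles this cleanly once a Birkhoff factorization is known to exist with bounded (indeed $C^s$-algebra-valued) symbol, so the crux is really to get existence of the factorization purely from "$a,b$ have no common zero in $\Delta$" — i.e. to show this non-vanishing condition is equivalent to invertibility of $A(k_1)$. I would prove that equivalence by a dimension/cokernel count: a nonzero element of $\ker A(k_1)$ or $\ker A(k_1)^*$ would, after the partial factorization, produce a bounded holomorphic solution of a homogeneous Riemann–Hilbert problem forcing a common zero of $a$ and $b$, contradicting the hypothesis; conversely a common zero at $p\in\Delta$ visibly kills $A(k_1)$ by exhibiting an explicit element of the kernel (roughly, evaluation-type functionals supported at $p$). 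Modulo that equivalence, assembling $(c_1)$ — and then transporting everything to $(c_2)$ via $\sigma$ — is routine.
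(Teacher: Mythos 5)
Your overall strategy for $(a_1)\Rightarrow(c_1)$ --- prove $A(k_1)$ (and $A_1(k_1)$) invertible, then invoke Theorem \ref{factorization} and Corollary \ref{decomposing} --- is a legitimate alternative to the paper's argument, and your key step is the same as the paper's: a nonzero kernel element of the Toeplitz operator yields a holomorphic solution of a homogeneous problem, which the no-common-zero hypothesis forces to vanish. Combined with Theorem \ref{hartmann} (index zero on $QC\supset C^s$), injectivity does give invertibility. The paper, however, does not route the construction through Fredholm theory: it reads off $a_2=d(0)^{-1}$, $\gamma_2=a_2c$, $\delta_2=a_2d$ directly from the second row of $(c_2)$, and then produces the off-diagonal entry $x^*$ of the outer factor by showing that the operator $x^*\mapsto((cx^*)_-,(dx^*)_-)$ (a restriction of $D(k_2^*)$) is injective with closed range and that the datum $((d^*)_-,-c^*)$ is orthogonal to $\ker T^*$. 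That costs more work for $C^s$, but it applies to merely measurable $k_2$ and is reused verbatim in the $W^{1/2}$ theorem, which your Fredholm route would not support. Your excursion into outer functions and Blaschke products is unnecessary on either route.

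Two genuine gaps remain. First, Theorem \ref{factorization} only delivers $g=lmau$ with $l\in H^0(\Delta^*,SL(2,\mathbb C))$ unipotent lower triangular \emph{at $\infty$}, whereas $(c_1)$ demands the much stronger statement $l=\left(\begin{smallmatrix}1&0\\y^*&1\end{smallmatrix}\right)$ identically. You never derive this, and it is exactly where the special form in $(a_1)$ enters beyond invertibility of Toeplitz operators: since the first row $(a,b)$ of $k_1$ lies in $H^0(\Delta)$, the first row of $l=k_1u^{-1}(ma)^{-1}$ lies in $H^0(\Delta)\cap H^0(\Delta^*)$, hence is constant, hence equals $(1,0)$ by the normalization at $\infty$, and unimodularity then gives $l_{22}=1$. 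Second, your claim that unitarity forces the $m$-factor to be trivial is false: replacing $k_1$ by $\mathrm{diag}(i,-i)\,k_1$ preserves unitarity and the matrix shape $\left(\begin{smallmatrix}a&b\\-b^*&a^*\end{smallmatrix}\right)$ but multiplies $m_0$ by $i$. What kills $m$ is the normalization $a(0)>0$ (resp.\ $d(0)>0$ in the $k_2$ case, which the paper uses via $\delta_2(0)=1$ to force $a_2=d(0)^{-1}>0$). Both gaps are fillable, but they are precisely the points at which the hypotheses of $(a_1)$ other than non-vanishing are actually used.
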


\begin{remarks} (a) When $k_2\in L_{fin}SU(2)$, the determinant condition
$c^*c+dd^*=1$ can be interpreted as an equality of finite Laurent
expansions in $\mathbb C^*$. Together with $d(0)>0$, this implies
that $c$ and $d$ do not simultaneously vanish. Thus the added
hypotheses in $(a_i)$ of Theorem \ref{SU(2)theorem1smooth} are
superfluous in the finite case.

(b) The kind of example we have to avoid in the $C^{\infty}$ case
is
$$k_2=\left(\begin{matrix}d^*&0\\0&d\end{matrix}\right),\quad d=\frac{z-r}{rz-1}$$ where
$0<r<1$.

(c) The factorizations in $(b_i)$ of Theorem \ref{SU(2)theorem1}
are akin to nonabelian Fourier expansions. Consequently it is
highly unlikely that one can characterize the coefficients for
$C^s$ loops. For this purpose we consider a Sobolev completion at
the end of this section.

\end{remarks}

\begin{proof} As we remarked in the Introduction, the two sets of
conditions are intertwined by the outer involution $\sigma$. Also
it is evident that $(c_2)\implies(a_2)$: by multiplying the
matrices in $(c_2)$, we see that $c=a_2^{-1}\gamma_2$ and
$d=a_2^{-1}\delta_2$, and these cannot simultaneously vanish at a
point in $\Delta$. We will now prove, in reference to Theorem
\ref{SU(2)theorem1}, that
$(b_2)\implies(a_2)\implies(c_2)\implies(b_2)$. The second step
will also complete the proof of Theorem \ref{SU(2)theorem1smooth}.

It is straightforward to calculate that a loop as in ($b_2$) has
the matrix form in ($a_2$):

\begin{proposition}\label{gammadelta}The product in ($b_2$) equals
$$\left( \prod a(\zeta_i) \right) \left(\begin{matrix} \delta_2^*&-\gamma_2^*\\
\gamma_2&\delta_2\end{matrix} \right),$$ where
$$\gamma_2(z)=\sum_{n=1}^{\infty}\gamma_{2,n}z^n,$$
$$\gamma_{2,n}=\sum (-\bar{\zeta}_{i_1})\zeta_{j_1}...(-\bar{\zeta}_{
i_r})\zeta_{j_r}(-\bar{\zeta}_{i_{r+1}}),$$ the sum over
multiindices satisfying
$$0<i_1<j_1<..<j_r<i_{r+1},\quad\sum i_{*}-\sum j_{*}=n,$$
and
$$\delta_2(z)=1+\sum_{n=1}^{\infty}\delta_{2,n}z^n,$$
$$\delta_{2,n}=\sum\zeta_{i_1}(-\bar{\zeta}_{j_1})...\zeta_{i_r}(
-\bar{\zeta}_{j_r}),$$ the sum over multiindices satisfying
$$0<i_1<j_1<..<j_r,\quad\sum (j_{*}-i_{*})=n.$$
\end{proposition}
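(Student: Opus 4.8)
The plan is to prove Proposition \ref{gammadelta} by induction on the number $n$ of factors in the product appearing in ($b_2$), tracking how the lower-left and lower-right entries transform under multiplication by one more elementary factor on the left. First I would set $P_n = a(\zeta_n)\left(\begin{smallmatrix} 1 & \zeta_n z^{-n} \\ -\bar\zeta_n z^n & 1\end{smallmatrix}\right) \cdots a(\zeta_1)\left(\begin{smallmatrix} 1 & \zeta_1 z^{-1} \\ -\bar\zeta_1 z & 1 \end{smallmatrix}\right)$ and observe that, since each elementary factor has the $SU(2)$ shape $\left(\begin{smallmatrix} p^* & -q^* \\ q & p\end{smallmatrix}\right)$ with $p = a(\zeta_j)$, $q = -a(\zeta_j)\bar\zeta_j z^j$, the product $P_n$ automatically keeps that shape; so it suffices to record the evolution of the bottom row. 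Writing $P_n = \left(\prod_{i=1}^n a(\zeta_i)\right)\left(\begin{smallmatrix} \delta_2^{(n)*} & -\gamma_2^{(n)*} \\ \gamma_2^{(n)} & \delta_2^{(n)}\end{smallmatrix}\right)$, left-multiplication by the $(n{+}1)$-st factor gives the recursion
\[
\gamma_2^{(n+1)} = -\bar\zeta_{n+1} z^{n+1}\,\delta_2^{(n)} + \gamma_2^{(n)}, \qquad
\delta_2^{(n+1)} = \zeta_{n+1} z^{-(n+1)}\,\gamma_2^{(n)} + \delta_2^{(n)},
\]
where I have divided out the scalar $a(\zeta_{n+1})$ and used that the new factor's bottom row is $(-\bar\zeta_{n+1}z^{n+1}, 1)$ — here one must also note that the off-diagonal entry $\zeta_{n+1}z^{-(n+1)}$ of the new factor acts on the \emph{top} row of $P_n$, i.e. on $\gamma_2^{(n)*}$ and $-\delta_2^{(n)*}$, but $(\gamma_2^{(n)*})^* = \gamma_2^{(n)}$, which is why the same two functions reappear. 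The base case $n=0$ is $\gamma_2^{(0)} = 0$, $\delta_2^{(0)} = 1$.

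Next I would unwind the recursion into the claimed closed form. Expanding $\delta_2^{(n)}$ as an alternating telescoping sum of the $\gamma$-contributions, each monomial in $\delta_{2,m}$ arises from a unique chain of alternating substitutions $\delta \to \gamma \to \delta \to \cdots \to \delta$; bookkeeping the exponents shows that a term $\zeta_{i_1}(-\bar\zeta_{j_1})\cdots\zeta_{i_r}(-\bar\zeta_{j_r})$ appears exactly when it is produced by first applying the factor of index $i_1$ (contributing $\zeta_{i_1} z^{-i_1}$ acting on a $\gamma$-term), then the factor of index $j_1 > i_1$ (contributing $-\bar\zeta_{j_1} z^{j_1}$), and so on, the strict inequalities $0 < i_1 < j_1 < \cdots < j_r$ being forced because each recursion step at stage $k$ can only use the index $k$ exactly once and can only build on data assembled from strictly smaller indices. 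The net power of $z$ in such a term is $\sum(j_* - i_*)$, which must equal $m$; this gives the stated formula for $\delta_{2,n}$. The same reasoning applied to $\gamma_2^{(n)}$, now with chains ending in a $\gamma$-step (hence one extra $-\bar\zeta$ factor and a leading index $i_1 \geq 1$), yields $\gamma_{2,n} = \sum (-\bar\zeta_{i_1})\zeta_{j_1}\cdots(-\bar\zeta_{i_r})\zeta_{j_r}(-\bar\zeta_{i_{r+1}})$ over $0 < i_1 < j_1 < \cdots < j_r < i_{r+1}$ with $\sum i_* - \sum j_* = n$; in particular each $\gamma_2^{(n)}$ is a polynomial in $z$ with only positive powers and no constant term, while each $\delta_2^{(n)}$ has constant term $1$ and only nonnegative powers, matching the asserted ranges of summation.

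The main obstacle is the combinatorial bookkeeping in the second step: one has to verify that the naive expansion of the recursion produces \emph{exactly} the monomials indexed by strictly increasing multiindices of the stated parity, with no collisions and no omissions, and that the exponent constraint $\sum i_* - \sum j_* = n$ (respectively for $\gamma$) is automatically satisfied. The cleanest way to make this rigorous is to check directly that the \emph{proposed} closed-form expressions satisfy the recursion and the base case: substituting the candidate $\gamma_2^{(n)}, \delta_2^{(n)}$ into $\gamma_2^{(n+1)} = -\bar\zeta_{n+1}z^{n+1}\delta_2^{(n)} + \gamma_2^{(n)}$ amounts to observing that every admissible multiindex for $\gamma^{(n+1)}_{2}$ either has top index $i_{r+1} \leq n$ (these are the terms already present in $\gamma_2^{(n)}$) or has $i_{r+1} = n+1$ (these are $-\bar\zeta_{n+1}$ times an admissible multiindex for $\delta_2^{(n)}$), and symmetrically for $\delta$; uniqueness of Birkhoff/triangular factorization is not needed here, only the elementary matrix algebra. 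Once the recursion is confirmed, an induction on $n$ finishes the proof, and letting $n \to \infty$ in the rapidly-decreasing case of Theorem \ref{U(2)theorem} justifies writing the series with upper limit $\infty$.
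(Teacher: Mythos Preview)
Your inductive strategy is exactly what the paper has in mind (the paper simply writes ``This is a straightforward induction, which we omit''), so the overall plan is right. However, the recursion you wrote down is incorrect, and the error is not merely cosmetic: it gives the wrong answer already at $N=2$.

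When you left-multiply $P_N$ (with scalar removed) by $\left(\begin{smallmatrix}1&\zeta_{N+1}z^{-(N+1)}\\-\bar\zeta_{N+1}z^{N+1}&1\end{smallmatrix}\right)$, the bottom row of the new factor hits the \emph{columns} of $P_N$, whose top entries are $\delta_2^{(N)*}$ and $-\gamma_2^{(N)*}$. Hence the correct recursion is
\[
\gamma_2^{(N+1)}=-\bar\zeta_{N+1}z^{N+1}\,\delta_2^{(N)*}+\gamma_2^{(N)},\qquad
\delta_2^{(N+1)}=\bar\zeta_{N+1}z^{N+1}\,\gamma_2^{(N)*}+\delta_2^{(N)},
\]
with the stars. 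Your version (no stars, and $\zeta_{N+1}z^{-(N+1)}$ in the $\delta$-equation) gives $\delta_2^{(2)}=1-\zeta_2\bar\zeta_1 z^{-1}$, whereas the proposition requires $\delta_2^{(2)}=1-\zeta_1\bar\zeta_2 z$. Your parenthetical remark that ``$(\gamma_2^{(n)*})^*=\gamma_2^{(n)}$, which is why the same two functions reappear'' does not rescue this: the star genuinely remains in the recursion.

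Once the stars are restored, your verification-by-substitution idea works, but the bookkeeping changes: a $\gamma_2^{(N+1)}$-monomial with top index $i_{r+1}=N+1$ is $-\bar\zeta_{N+1}z^{N+1}$ times a term of $\delta_2^{(N)*}$, not of $\delta_2^{(N)}$. Concretely, stripping off $(-\bar\zeta_{N+1})$ from $(-\bar\zeta_{i_1})\zeta_{j_1}\cdots(-\bar\zeta_{i_r})\zeta_{j_r}(-\bar\zeta_{N+1})$ leaves $(-\bar\zeta_{i_1})\zeta_{j_1}\cdots(-\bar\zeta_{i_r})\zeta_{j_r}$, which is the complex conjugate of a $\delta_2^{(N)}$-monomial (roles of barred and unbarred swapped), matching the coefficient of $z^{-m}$ in $\delta_2^{(N)*}$ with $m=\sum j-\sum i$; the net power $N+1-m$ then agrees with $\sum i-\sum j$ for the enlarged multiindex. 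The $\delta$-case is symmetric. With this correction the induction closes and your proposal is complete.
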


This is a straightforward induction, which we omit.

Now suppose that we are given a loop $k_2$ satisfying the
conditions in ($a_2$), with one exception: for later convenience,
we initially assume that $k_2$ is merely measureable. Suppose that
$$A(k_2)f=P_+(\left(\begin{matrix}d^*&-c^*\\c&d\end{matrix}\right)
\left(\begin{matrix}f_1\\f_2\end{matrix}\right))
=\left(\begin{matrix}0\\0\end{matrix}\right).$$ Then
$cf_1+df_2=0\in H^0(\Delta)$, and hence by the independence of $c$
and $d$ around $S^1$, $(f_1,f_2)=\lambda(d,-c)$. Because $c$ and
$d$ do not simultaneously vanish, this implies that $\lambda$ is
holomorphic in $\Delta$. We also have $(d^*\lambda
d-c^*\lambda(-c))_+=\lambda_+=0$. Thus $\lambda=0$. Thus the
Toeplitz operator is invertible [Note: conversely, if $c$ and $d$
have a common zero $z_0\in \Delta$, then the Toeplitz operator is
not invertible: take $\lambda=1/(z-z_0)$]. The same argument shows
that $A_1(k_2)$, and also $D(k_2)$, are invertible.

We must now show that this loop has a triangular factorization as
in ($c_2$), i.e. we must solve for $a_2$, $x^*$, and so on, in
\begin{equation}\label{solve}k_2(z)=\left(\begin{matrix} d^{*}&-c^{*}\\
c(z)&d(z)\end{matrix} \right)=\left(\begin{matrix} 1&\sum_{j=1}^n \bar x_jz^{-j}\\
0&1\end{matrix} \right)\left(\begin{matrix} a_2&0\\
0&a_2^{-1}\end{matrix} \right)\left(\begin{matrix} \alpha_2 (z)&\beta_2 (z)\\
\gamma_2 (z)&\delta_2 (z)\end{matrix} \right).\end{equation} The
form of the second row implies that we must have $a_2=d(0)^{-1}$ ,
and
\begin{equation}\label{eqn0}\gamma_2=a_2 c, \quad \text{and} \quad \delta_2=a_2 d.\end{equation}
because $\delta_2(0)=1$. This does define $a_2>0$, $\gamma_2$ and
$\delta_2$ in a way which is consistent with $(c_2)$, because
$c(0)=0$ and $d(0)>0$.

Using (\ref{eqn0}), the first row in (\ref{solve}) is equivalent
to
\begin{equation}\label{eqn1}d^{*}=\alpha_2+x^{*}c,\quad and\quad
-c^{*}=\beta_2+x^{*}d\end{equation} In the finite case, by
considering the second equation as an equality in $\mathbb C^*$,
we can immediately obtain that $x^*=-(c^*/d)_-$. The $C^s$ case is
more involved.

Consider the Hardy space polarization
$$H:=L^2(S^1,d\theta )=H^{+}\oplus H^{-},$$
and the operator
$$T:H^{-}\to H^{-}\oplus H^{-}:x^{*}\to (((cx^{*})_{-},(dx^{*})_{
-}).$$ The operator $T$ is the restriction of $D(k_2)^*=D(k_2^*)$
to the subspace $\{(x^*,0)\in \mathcal H^-\}$, consequently it is
injective with closed image.

The adjoint of $T$ is given by
$$T^{*}:H^{-}\oplus H^{-}\to H^{-}:(f^{*},g^{*})\to c^{*}f^{*}+d^{
*}g^{*}.$$ If $(f^{*},g^{*})\in ker(T^{*})$, then
$c^{*}f^{*}+d^{*}g^{*}$ vanishes in the closure of $\Delta^{*}$,
and because $\vert c\vert^2+\vert d\vert^ 2=1$ around $S^1$,
$(f^{*},g^{*})=\lambda^{*}(d^{*},-c^{*})$, where $\lambda^{*}$ is
holomorphic in $ \Delta^{*}$ and vanishes at $\infty$ because
$d^{*}(\infty )=d(0)>0$. We now claim that $(d^{*}_{-},-c^{*})\in
ker(T^{*})^{\perp}$: $$\int ((d^{*}_{-})f+(-c^{*})g)d\theta
=\int\lambda (d^{*}d+c^{*} c)d\theta =\int\lambda d\theta =0,$$
because $\lambda (0)=0$. Because $T$ has closed image, there
exists $x^{*}\in H^{-}$ such that
\begin{equation}\label{eqn2}d^{*}_{-}=(x^{*}c)_{-},\quad and\quad
-c^{*}=(x^{*}d)_{-}.\end{equation} We can now solve for $\alpha_2$
and $\beta_2$ in (\ref{eqn1}). This shows that $k_2$ in $(a_2)$
has a triangular factorization as in $(c_2)$. When $k_2\in C^s$,
by Corollary \ref{decomposing}, the factors are $C^s$. This
completes the proof of Theorem \ref{SU(2)theorem1smooth}.

We have now shown that $(b_2) \implies (a_2) \implies (c_2)$. To
prove that $(c_2)$ implies $(b_2)$, one method is to explicitly
solve for $x^*$ in terms of the $\zeta$ variables, then show that
this relation can be inverted. The formula for $x$ in terms of
$\zeta$ is discussed in the Appendix. For our present purposes we
only need to know that
$$x^{*}=\sum_{j=1}^{\infty}x_1^{*}(\zeta_j,..)z^{-j},$$
where
$$x_1^{*}(\zeta_1,..)=\zeta_1\prod_{k=2}^{\infty}
(1+\vert\zeta_k\vert^2)+\zeta_2\prod_{k=3}^{\infty}
(1+\vert\zeta_k\vert^2)s_2(\zeta_2,\zeta_3,..)+\zeta_3\prod_{k=4}^{\infty}
(1+\vert\zeta_k\vert^2)s_3(\zeta_3,\zeta_4,..)+.. $$ (in the
current context, these are finite sums). This structure implies
that we can solve for the $\zeta_j$ in terms of the $x_i$, and in
fact
$$\zeta_n(x_1,x_2,..)=\zeta_1(x_n,x_{n+1},..).$$ (Note: the
equivalence of $(b_2)$ and $(c_2)$ is implied by Theorem 5 of
\cite{P}, which uses Lie theory; here we are emphasizing the
elementary nature of the correspondence). This completes the proof
of Theorem \ref{SU(2)theorem1}.
\end{proof}

It is obvious that for $k_2$ in Theorem \ref{SU(2)theorem1}, there
is a factorization $a_2=\prod a(\zeta_j)^{-1}$. By considering the
Kac-Moody central extension of $LSU(2)$, one can obtain a
refinement of this factorization (recall (\ref{diagonal}), which
suggests the existence of this refinement).

\begin{theorem}\label{det1} For $k_i$ as in Theorem \ref{SU(2)theorem1},
$det(A^*A(k_1))$ equals
$$\lim_{N\to{\infty}}det(A_N(k_1))=det(1-C^*C(k_1))
=det(1+\dot B^*\dot B(y))^{-1}=\prod_{n\ge 1}(1+\vert \eta_n
\vert^2)^{-n}$$ and $det(A^*A(k_2))$ equals
$$\lim_{N\to{\infty}}det(A_N(k_2))=det(1-C^*C(k_2)) =det(1+\dot
B^*\dot B(x))^{-1}=\prod_{n\ge 1}(1+\vert \zeta_n\vert^2)^{-n},$$
where $A_N$ denotes the finite dimensional compression of $A$ to
the span of $\{\epsilon_i z^k:0\le k\le N\}$, and in the third
expressions, $x$ and $y$ are viewed as multiplication operators on
$H=L^2(S^1)$, with Hardy space polarization.
\end{theorem}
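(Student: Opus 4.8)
The plan is to prove the chain of equalities in Theorem \ref{det1} from left to right, exploiting the factorization $k_2 = l m a u$ established in Theorem \ref{SU(2)theorem1}, the fact that $k_2$ is $SU(2)$-valued so that $M_{k_2}$ is unitary, and the explicit form $k_2 = a(\zeta_n)(\cdots)\cdots a(\zeta_1)(\cdots)$ from $(b_2)$. First I would treat the finite-dimensional truncations $A_N(k_2)$. By the block-periodic matrix description of $M_{k_2}$ in Section \ref{triangularfactorization}, and because in the finite ($L_{fin}$) case $c,d$ are polynomials, the Toeplitz operator $A(k_2)$ differs from the identity by a finite-rank operator, so $A_N(k_2)$ stabilizes for large $N$ and $\det A_N(k_2) \to \det A(k_2)$ makes literal sense; since $M_{k_2}$ is unitary we get the matrix identity $A^*A + C^*C = 1$ on $\mathcal H^+$, whence $\det(A^*A(k_2)) = \det(1 - C^*C(k_2))$, the relevant determinants being of $1+\text{trace-class}$ type because $C(k_2)$ is Hankel-type and finite rank here. (One must check $\lim_N \det A_N = \det A$ is consistent with $\det A^*A = |\det A|^2$; here $\det A(k_2)$ is real positive because $a_2 = d(0)^{-1}>0$ and the diagonal-factor formula \eqref{diagonal} identifies $\det A(k_2)$ with a positive quantity.)

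Next I would identify $C(k_2)$ with a Hankel operator built from $x$. From $(c_2)$, $k_2 = l \, (\text{diag}) \, u$ with $l = \left(\begin{smallmatrix}1 & \sum \bar x_j z^{-j}\\ 0 & 1\end{smallmatrix}\right)$, and since $u \in H^0(\Delta)$ has $C(u) = 0$ while $m a$ is constant, the off-diagonal block $C(k_2)$ factors through the Hankel operator associated to the symbol $x^* = \sum \bar x_j z^{-j}$, i.e. $C(k_2) = (\text{isometry})\circ H_{x^*}\circ(\text{isometry})$ where $H_{x^*}$, viewed via the Hardy polarization of $H = L^2(S^1)$, is the "dotted" Hankel operator $\dot B(x)$ in the paper's notation. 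This gives $\det(1 - C^*C(k_2)) = \det(1 - \dot B(x)^*\dot B(x))$, and then the standard Hankel identity $(1 - \dot B^*\dot B)^{-1} = 1 + \dot B^* \dot B$-type manipulation — really the relation coming from $u = $ (lower triangular part) forcing $1 + \dot B^*\dot B(x)$ to be the inverse of $A^*A$ restricted appropriately — yields $\det(1 - C^*C(k_2)) = \det(1 + \dot B^*\dot B(x))^{-1}$. I would be careful to justify trace-class-ness of $\dot B^*\dot B(x)$ in the $C^s$ (or $L_{fin}$) case, where it is immediate, deferring the Sobolev $W^{1/2}$ subtleties to the later generalization.

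Finally, for the product formula $\prod_{n\ge 1}(1 + |\zeta_n|^2)^{-n}$, I would use the factorization from $(b_2)$ together with multiplicativity of $\det A^*A$ under the specific triangular structure. Writing $k_2 = g_n \cdots g_1$ with $g_j = a(\zeta_j)\left(\begin{smallmatrix}1 & \zeta_j z^{-j}\\ -\bar\zeta_j z^j & 1\end{smallmatrix}\right)$, each elementary factor $g_j$ is a "root subgroup" element for which a direct computation gives $\det A^*A(g_j) = (1+|\zeta_j|^2)^{-j}$ (the exponent $j$ coming from the $z^{\pm j}$, which shifts by $j$ units and so contributes $j$ copies of the rank-one defect). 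The multiplicativity $\det A^*A(gh) = \det A^*A(g)\det A^*A(h)$ does not hold for arbitrary $g,h$, but it does hold here because of the nested triangular shape: when one multiplies $g_{j+1}$ onto $g_j\cdots g_1$, the Toeplitz operator of the product has a block-triangular relation to the factors (this is exactly the mechanism in the proof of Theorem \ref{factorization}(b), and it is the content of the identity $\sigma_0(gh) = \sigma_0(g)\sigma_0(h)$ for the canonical section of the determinant line under the relevant ordering). I expect the main obstacle to be precisely this last multiplicativity step — making rigorous that the determinant-line section $\sigma_0 = \det A(\tilde g)$ is multiplicative along this particular sequence of root-subgroup elements, and that passing to the limit $n \to \infty$ (for rapidly decreasing, or $w^{1/2}$, sequences $\{\zeta_j\}$) is justified by trace-norm convergence of $C^*C(k_2)$, so that the infinite product $\prod_n (1+|\zeta_n|^2)^{-n}$ converges and equals the limit of the finite determinants. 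The analogous statements for $k_1$ follow by applying the outer involution $\sigma$, under which $A(k_1) \leftrightarrow A_1(k_1) = A(\sigma(k_1))$ and the roles of $\{\eta_n\}$ and $\{\zeta_n\}$ are exchanged.
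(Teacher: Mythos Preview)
Your unitarity step ($A^*A+C^*C=1$, hence $\det(A^*A)=\det(1-C^*C)$) matches the paper's one-line justification. For the rest the paper does not give a self-contained argument: it cites Widom \cite{W} for $\lim_N\det A_N=\det(A^*A)$ and Theorem~5 of \cite{P} for the last two equalities. Your attempt to fill this in has two genuine gaps.

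For the equality $\det(1-C^*C(k_2))=\det(1+\dot B^*\dot B(x))^{-1}$, your claim that $C(k_2)$ agrees with the scalar Hankel operator of $x^*$ up to isometries is wrong. From $k_2=l\,(ma)\,u$ with $l=\left(\begin{smallmatrix}1&x^*\\0&1\end{smallmatrix}\right)$ one has $C(k_2)=C(l)\,A(mau)$, and $A(mau)$ is invertible but not unitary, so $C^*C(k_2)\ne C(l)^*C(l)$; and the ``standard identity'' $(1-\dot B^*\dot B)^{-1}=1+\dot B^*\dot B$ is false in general. The correct mechanism, which the paper later records as (\ref{append1})--(\ref{6.73}), goes through $Z:=C(k_2)A(k_2)^{-1}$: unitarity gives $AA^*=(1+Z^*Z)^{-1}$, and the triangular factorization yields $Z(k_2)=C(l)A(l)^{-1}=C(l)$ (the factor $A(mau)$ cancels, and $A(l)^{-1}$ acts trivially on the range of $C(l)$ because $l$ is strictly upper triangular as a $2\times 2$ matrix). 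Only then does $\det(A^*A)=\det(1+\dot B^*\dot B(x))^{-1}$ follow.

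Separately, your handling of $\lim_N\det A_N(k_2)$ is off: $A(k_2)$ is \emph{not} a finite-rank perturbation of the identity (only $B(k_2)$ and $C(k_2)$ are finite rank in the $L_{fin}$ case), and the truncated determinants do not stabilize; their convergence to $\det(A^*A)$ is a genuine Szeg\H{o}--Widom limit theorem, which is why the paper simply cites \cite{W}. Your route to the product formula via multiplicativity along the elementary factors $g_j$, with the correct computation $\det A^*A(g_j)=(1+|\zeta_j|^2)^{-j}$, is reasonable, but the multiplicativity $\det A^*A(g_n\cdots g_1)=\prod_j\det A^*A(g_j)$ is precisely the nontrivial content of Theorem~5 of \cite{P}, and you have not supplied an independent argument for it.
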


The first equalities are special cases of Theorem 6.1 of \cite{W};
these are included for perspective: they demonstrate that finite
dimensional approximations detect the magnitude of $detA$, not its
phase. The second equalities follow from the unitarity of the
$M_{k_i}$; they explain why the determinants are well-defined,
since $C(k_i)$ is Hilbert-Schmidt if and only if $k_i\in W^{1/2}$
(this follows immediately from the matrix expression for $M_{k_i}$
in Section \ref{triangularfactorization}). The last two equalities
follow from Theorem 5 of \cite{P}.

\begin{lemma}\label{weaklimit}Suppose that $\zeta=(\zeta_n)\in l^2$. As in Theorem
\ref{SU(2)theorem1}, let
$$k_2^{(N)}=\left(\begin{matrix} d^{(N)*}&-c^{(N)*}\\
c^{(N)}&d^{(N)}\end{matrix} \right):=
\left(\prod_{n=1}^{N}a(\zeta_n)\right)\left(\begin{matrix} 1&\zeta_Nz^{-N}\\
-\bar{\zeta}_Nz^N&1\end{matrix} \right)..\left(\begin{matrix} 1&\zeta_1z^{-1}\\
-\bar{\zeta}_1z&1\end{matrix} \right).$$ Then $c^{(N)}$ and
$d^{(N)}$ converge uniformly on compact subsets of $\Delta$ to
holomorphic functions $c=c(\zeta)$ and $d=d(\zeta)$, respectively,
as $N\to\infty$. The functions $c$ and $d$ have radial limits at
a.e. point of $S^1$, $c$ and $d$ are uniquely determined by these
radial limits,
$$k_2(\zeta):=\left(\begin{matrix} d(\zeta)^*&-c(\zeta)^*\\
c(\zeta)&d(\zeta)\end{matrix} \right)\in Meas(S^1,GL(2,\mathbb
C)),$$ and $det(k_2) \le 1$ on $S^1$.
\end{lemma}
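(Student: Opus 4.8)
The plan is to prove convergence of $c^{(N)}$ and $d^{(N)}$ on compact subsets of $\Delta$ via the explicit combinatorial formulas from Proposition \ref{gammadelta}, then extract radial limits and the determinant bound. First I would write, using Proposition \ref{gammadelta} (adapted to the finite product $k_2^{(N)}$), the power series
$$c^{(N)}(z)=\sum_{n\ge 1}\gamma_{2,n}^{(N)}z^n,\qquad d^{(N)}(z)=1+\sum_{n\ge 1}\delta_{2,n}^{(N)}z^n,$$
where the coefficients are sums of products of the $\zeta_i,-\bar\zeta_j$ over strictly increasing multiindices bounded by $N$. The key estimate is that for a strictly increasing multiindex, $\sum_* |\zeta_{i_1}\cdots| \le (\sum_k |\zeta_k|^2)^{(\text{length})/2}$-type bounds do not directly work, so instead I would compare with the majorant: replace each $\zeta_k$ by $|\zeta_k|$ and note that the full (infinite) sum defining $\delta_{2,n}$ is dominated by the coefficient of $z^n$ in $\prod_{k\ge 1}(1+|\zeta_k| z^k)^{\pm 1}$-style products, which converge absolutely for $|z|<1$ whenever $\zeta\in\ell^2$ (indeed whenever $\sum|\zeta_k| r^k<\infty$, true for $r<1$ by Cauchy--Schwarz since $\sum r^{2k}<\infty$). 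Since the partial-product coefficients $\gamma_{2,n}^{(N)},\delta_{2,n}^{(N)}$ are monotone-stabilizing truncations of the absolutely convergent infinite sums, $c^{(N)}\to c$ and $d^{(N)}\to d$ uniformly on $\{|z|\le r\}$ for each $r<1$, and the limits are holomorphic on $\Delta$.

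Next I would handle the boundary behavior. Each $k_2^{(N)}$ is unitary on $S^1$, hence $|c^{(N)}|^2+|d^{(N)}|^2=1$ on $S^1$, so $\|c^{(N)}\|_{H^2}^2+\|d^{(N)}\|_{H^2}^2 \le \|c^{(N)}\|_{L^2(S^1)}^2+\|d^{(N)}\|_{L^2(S^1)}^2=2$ with the $H^2$ norms computed from the Taylor coefficients. Actually more precisely each of $c^{(N)}, d^{(N)}$ lies in $H^0(\Delta)$ with $\sup_N\|c^{(N)}\|_{H^2},\|d^{(N)}\|_{H^2}\le\sqrt2$. Since the Taylor coefficients converge, Fatou's lemma (or weak-$*$ compactness of the unit ball of $H^2$) gives $c,d\in H^2(\Delta)=H^0(\Delta)$; being in $H^2$, they have radial limits a.e.\ on $S^1$ and are uniquely determined by those radial limits (a nonzero $H^2$ function has radial limit zero only on a null set). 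This yields the measurable matrix $k_2(\zeta)$ with $L^2$ entries; membership in $GL(2,\mathbb C)$ a.e.\ follows once we know $\det k_2\ne 0$ a.e., which is subsumed in the final claim. For the determinant bound: $\det k_2^{(N)}=|c^{(N)}|^2+|d^{(N)}|^2=1$ on $S^1$; but the claim is about $\det k_2 = |c|^2+|d|^2$, the boundary values of the \emph{limit}. Here is the subtle point — one does not get $|c|^2+|d|^2=1$, only $\le 1$, because passing to radial limits only preserves the holomorphic structure, not the pointwise unitarity. I would argue: $\det k_2 = d^*d + c^*c$, and $d^*d+c^*c$ extends holomorphically into $\Delta$ (as $d\cdot d^* + \dots$ — actually $d(z)\overline{d(1/\bar z)}$ is the relevant object), so by the maximum principle applied to the holomorphic function whose boundary values are $\ge 0$... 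Let me instead use: $|c^{(N)}(z)|^2+|d^{(N)}(z)|^2\le 1$ for $z\in\Delta$ by the maximum modulus principle applied to the subharmonic function $|c^{(N)}|^2+|d^{(N)}|^2$ (boundary value $1$), hence the same for $|c|^2+|d|^2$ on $\Delta$ by the locally uniform convergence, and then taking radial limits (which exist a.e.) gives $|c|^2+|d|^2\le 1$ a.e.\ on $S^1$, i.e.\ $\det(k_2)\le 1$.

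The main obstacle I anticipate is the uniqueness-from-radial-limits and the strictness of the determinant bound: one must be careful that the limit functions $c,d$ are genuinely the $H^2$ (Smirnov-class) functions with the stated boundary values and are not contaminated by a singular inner factor that would be invisible on compacta of $\Delta$ — the uniform $H^2$ bound is exactly what rules this out, so the real work is assembling the combinatorial majorant estimate cleanly and invoking the right $H^2$-compactness statement. A secondary nuisance is justifying that the infinite sums in Proposition \ref{gammadelta} actually define the radial-limit functions of $c,d$ rather than merely the interior functions; this is immediate once interior uniform convergence plus the uniform $H^2$ bound are in hand, since convergence in $H^2$ then follows from dominated convergence on the coefficient side ($\sum_n|\delta_{2,n}^{(N)}-\delta_{2,n}|^2\to 0$ because each term converges and all are dominated by the majorant coefficients, which are square-summable as $\zeta\in\ell^2$ — this last square-summability of the majorant is the one point genuinely needing $\zeta\in\ell^2$ rather than merely $\zeta$ giving a convergent infinite product).
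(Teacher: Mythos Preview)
Your approach is broadly sound, and in one place it is cleaner than the paper's: your subharmonicity argument for $\det(k_2)\le 1$ is simpler than the paper's test-function/dominated-convergence maneuver. The paper instead bounds $|c^{(N)}|,|d^{(N)}|\le 1$ on $\Delta$ (maximum modulus, using unitarity on $S^1$), invokes Arzela--Ascoli to get subsequential limits, and then uses the explicit coefficient formulas from Proposition~\ref{gammadelta} only to force \emph{uniqueness} of the limit. You try instead to obtain uniform convergence on compacts directly from a majorant. Two points need attention.

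First, the proposed majorant ``$\prod_{k}(1+|\zeta_k|z^k)^{\pm 1}$-style product'' does not obviously dominate the sums for $\delta_{2,n}$: the multiindices $i_1<j_1<\cdots$ in Proposition~\ref{gammadelta} are unbounded even for fixed $n$ (e.g.\ $\delta_{2,1}=-\sum_{k\ge 1}\zeta_k\bar\zeta_{k+1}$), so there is no simple product form, and the crude partition-type estimate one naturally writes down only yields convergence for $|z|<(1+|\zeta|_{l^2}^2)^{-1}$, not for all $|z|<1$. The fix is already in your hands: the uniform bound $|c^{(N)}|,|d^{(N)}|\le 1$ on $\Delta$ (which is stronger than the $H^2$ bound you invoke, and follows from the same unitarity) gives a normal family, and your coefficient-convergence argument then pins down the unique limit. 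This is exactly the paper's route, and you should use it rather than the majorant.

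Second, you say that a.e.\ membership of $k_2$ in $GL(2,\mathbb C)$ ``is subsumed in the final claim'' $\det(k_2)\le 1$; it is not. You need $\det(k_2)>0$ a.e.\ separately. The paper gets this by observing $d(0)=\prod_{j}a(\zeta_j)>0$ (the product converges since $\zeta\in l^2$), so $d\not\equiv 0$ in $H^{\infty}(\Delta)$, hence its radial boundary values are nonzero a.e., and therefore $\det(k_2)=|c|^2+|d|^2\ge |d|^2>0$ a.e.\ on $S^1$.
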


A crucial lingering issue is the unitarity of $k_2$. In the course
of proving Theorem \ref{SU(2)theorem1W}, we will prove that $k_2$
is unitary on $S^1$ when $\zeta \in w^{1/2}$. Conjecturally this
is true for $\zeta \in l^2$ (see Section \ref{Summary}).

\begin{proof} Because $d^{(N)}d^{(N)*}+c^{(N)}c^{(N)*}=1,$ both $(c^{(N)})$ and
$(d^{(N)})$ are sequences of holomorphic functions on $\Delta$
which are bounded by $1$. By the Arzela-Ascoli Theorem, there
exist subsequences which converge uniformly to holomorphic
functions on $\Delta$, which will also be bounded by $1$.

We claim these limits are unique. As in Proposition
\ref{gammadelta}, write $k^{(N)}$ as
$$\left(\prod_{n=1}^{N}a(\zeta_n)\right)\left(\begin{matrix} \delta_2^{(N)*}&-\gamma_2^{(N)*}\\
\gamma_2^{(N)}&\delta_2^{(N)}\end{matrix} \right).$$ The
$\prod_{n=1}^{\infty}a(\zeta_n)$ converges, because $\zeta\in
l^2$. Proposition \ref{gammadelta} gives explicit expressions for
the coefficients of $\gamma_2^{(N)}$ and $\delta_2^{(N)}$. Very
crude estimates show that these expressions have well-defined
limits as $N\to \infty$. To see this, consider the formula for the
$n$th coefficient of $\delta_2$, and let $\mathcal P(n)$ denote
the set of partitions of $n$ (i.e. decreasing sequences $n_1\ge
n_2 \ge ..\ge n_l>0$, where $\sum n_j=n$ is the magnitude and
$l=l({n_j})$ is the length of the partition). Then
\begin{equation}\label{coefficient}\vert \delta_{2,n}\vert
\le\sum\vert \zeta_{i_1}\vert \vert \bar{\zeta}_{j_1}\vert
...\vert \zeta_{i_r}\vert \vert \bar{\zeta}_{j_r}\vert
,\end{equation} where the sum is over multiindices satisfying
$$0<i_1<j_1<..<j_r,\quad\sum (j_{*}-i_{*})=n.$$
If $n_k=j_k-i_k$, then $\sum n_k=n$, but this sequence is not
necessarily decreasing. However if we eliminate the constraints
$i_1<..<i_r$, then we can permute the indices ($1\le k\le r$) for
the $i_k$ and $n_k$. We can crudely estimate that
(\ref{coefficient}) is
$$\le \sum_{(n_i)\in \mathcal P (n)} \sum_{i_1,..,i_l>0}
\vert\zeta_{i_1}\vert
\vert\zeta_{i_1+n_1}\vert..\vert\zeta_{i_l}\vert
\vert\zeta_{i_l+n_l}\vert= \sum_{(n_i)\in \mathcal P (n)}
\prod_{s=1}^l \sum_{i_s>0}\vert\zeta_{i_s}\vert
\vert\zeta_{i_s+n_s}\vert$$
$$\le\sum_{\mathcal P(n)}\vert \zeta \vert_{l^2}^{2l((n_i))}$$
This shows that the Taylor coefficients of any limiting function
for the $\delta^{(N)}$ will be given by the formulas in
Proposition \ref{gammadelta}. The same considerations apply to the
$\gamma^{(N)}$. Thus the sequences $(\gamma^{(N)})$ and
$(\delta^{(N)})$ converge uniformly on compact sets of $\Delta$ to
unique limiting functions. This proves our claim about uniqueness
of the limits $c$ and $d$.

Because $c$ and $d$ are bounded by $1$ on $\Delta$, $c$ and $d$
have radial limits at a.e. point of $S^1$, and these boundary
values uniquely determine $c$ and $d$.

Finally we consider $det(k_2)$ on $S^1$. Since $c$ and $d$ are
holomorphic in $\Delta$, and $d(0)=\prod a(\zeta_j)\ne 0$,
$det(k_2)=\vert d\vert^2+\vert c\vert^2$ is nonzero a.e. on $S^1$.
Thus $k_2$ is invertible a.e. on $S^1$. Clearly $\vert
d\vert^2+\vert c\vert^2\le 2$ on the closure of $\Delta$, since
$\vert d\vert$ and $\vert c\vert$ are bounded by $1$. This also
holds for $d^{(N)}$ and $c^{(N)}$. If $\rho\in L^1(S^1,d\theta)$
is positive, then
$$\int_{S^1}(\vert d\vert^2+\vert c\vert^2)\rho d\theta
=\lim_{r\uparrow 1}\int_{S^1}(\vert d\vert^2+\vert
c\vert^2)(re^{i\theta})\rho(e^{i\theta}) d\theta,$$ (by dominated
convergence)
$$=\lim_{r\uparrow 1}\lim_{N\to\infty}\int_{S^1}(\vert
d^{(N)}\vert^2+\vert
c^{(N)}\vert^2)(re^{i\theta})\rho(e^{i\theta})
d\theta\le\lim_{N\to\infty}\limsup_{r\uparrow 1}\int_{S^1}(\vert
d^{(N)}\vert^2+\vert
c^{(N)}\vert^2)(re^{i\theta})\rho(e^{i\theta}) d\theta$$
$$=\lim_{N\to\infty}\int_{S^1}(\vert d^{(N)}\vert^2+\vert
c^{(N)}\vert^2)(e^{i\theta})\rho(e^{i\theta})
d\theta=\int_{S^1}\rho(e^{i\theta}) d\theta$$ Since $\rho$ is a
general positive integrable function, this implies that $\vert
d\vert^2+\vert c\vert^2\le 1$ on $S^1$.

This completes the proof.
\end{proof}

\begin{remark}To show that $k_2$ has values in $SU(2)$, it would suffice to show
\begin{equation}\label{goal}\frac1{2\pi}\int_{S^1}(\vert d\vert^2+\vert
c\vert^2)d\theta=1.\end{equation} This would follow immediately
(by dominated convergence) if we knew that $c^{(N)}$ ($d^{(N)}$)
converged to $c$ ($d$, respectively) on $S^1$. But we have not
shown this. Since $d(0)=\prod a(\zeta_j)$, it is clear that
(\ref{goal}) is bounded below by $\prod a(\zeta_j)^2$.
\end{remark}

\begin{theorem}\label{SU(2)theorem1W} Suppose that $k_1\in Meas(S^1,SU(2))$.
The following are equivalent:

($a_1$) $k_1$ is of the form
$$k_1(z)=\left(\begin{matrix} a(z)&b(z)\\
-b^*&a^*\end{matrix} \right),\quad z\in S^1,$$ where $a,b\in
H^0(\Delta)$ have $W^{1/2}$ boundary values, $a(0)>0$, and $a$ and
$b$ do not simultaneously vanish at a point in $\Delta$.

($b_1$) $k_1$ has a factorization of the form
$$k_1(z)=\lim_{n\to\infty}a(\eta_n)\left(\begin{matrix} 1&-\bar{\eta}_nz^n\\
\eta_nz^{-n}&1\end{matrix} \right)..a(\eta_0)\left(\begin{matrix}
1&
-\bar{\eta}_0\\
\eta_0&1\end{matrix} \right),$$ where $\eta\in w^{1/2}$, and the
limit is understood as in Lemma \ref{weaklimit}.

($c_1$) $k_1$ has triangular factorization of the form
$$\left(\begin{matrix} 1&0\\
\sum_{j=0}^ny^*_jz^{-j}&1\end{matrix} \right)\left(\begin{matrix} a_1&0\\
0&a_1^{-1}\end{matrix} \right)\left(\begin{matrix} \alpha_1 (z)&\beta_1 (z)\\
\gamma_1 (z)&\delta_1 (z)\end{matrix} \right),$$ where $y$ has
$W^{1/2}$ boundary values.

Moreover this defines a bijective correspondence between $\eta\in
w^{1/2}$ and $(y_n)\in w^{1/2}$.

Similarly, the following are equivalent:

($a_2$) $k_2$ is of the form
$$k_2(z)=\left(\begin{matrix} d^{*}&-c^{*}\\
c(z)&d(z)\end{matrix} \right),\quad z\in S^1,$$ where $c,d\in
H^0(\Delta)$ have $W^{1/2}$ boundary values, $c(0)=0$, $d(0)>0$,
and $c$ and $d$ do not simultaneously vanish at a point in
$\Delta$.

($b_2$) $k_2$ has a factorization of the form
$$k_2(z)=\lim_{n\to\infty}a(\zeta_n)\left(\begin{matrix} 1&\zeta_nz^{-n}\\
-\bar{\zeta}_nz^n&1\end{matrix}
\right)..a(\zeta_1)\left(\begin{matrix} 1&
\zeta_1z^{-1}\\
-\bar{\zeta}_1z&1\end{matrix} \right),$$ where $\zeta\in w^{1/2}$,
and the limit is understood as in Lemma \ref{weaklimit}.

($c_2$) $k_2$ has triangular factorization of the form
$$\left(\begin{matrix} 1&\sum_{j=1}^nx^*_jz^{-j}\\
0&1\end{matrix} \right)\left(\begin{matrix} a_2&0\\
0&a_2^{-1}\end{matrix} \right)\left(\begin{matrix} \alpha_2 (z)&\beta_2 (z)\\
\gamma_2 (z)&\delta_2 (z)\end{matrix} \right), $$ where $x$ has
$W^{1/2}$ boundary values.

Moreover this defines a bijective correspondence between $\zeta\in
w^{1/2}$ and $(x_n)\in w^{1/2}$.

\end{theorem}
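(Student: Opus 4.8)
The plan is to follow the same cyclic strategy used for the $C^s$ case, namely to prove $(b_2)\implies(a_2)\implies(c_2)\implies(b_2)$, upgrading each implication from the decaying/smooth setting to the critical Sobolev space $W^{1/2}$, and then to extract the claimed bijection between $\zeta\in w^{1/2}$ and $(x_n)\in w^{1/2}$. The implications for $k_1$ then follow by applying the outer involution $\sigma$ of \eqref{outer}, exactly as in Theorem \ref{SU(2)theorem1}, since $\sigma$ interchanges the two families and preserves $W^{1/2}$ (it only shifts Fourier coefficients).

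First I would handle $(b_2)\implies(a_2)$. By Lemma \ref{weaklimit}, the limit $k_2(\zeta)$ exists as a measurable $GL(2,\mathbb C)$-valued loop with $\det\le 1$, and its entries $c,d$ are the radial boundary values of functions holomorphic and bounded by $1$ on $\Delta$. The two new things to prove are: (1) $c,d$ have $W^{1/2}$ boundary values when $\zeta\in w^{1/2}$, and (2) $k_2$ is actually unitary, i.e. $\det k_2=1$ a.e., which by the remark after Lemma \ref{weaklimit} reduces to the equality \eqref{goal}. For (1), the natural route is to control the Hilbert--Schmidt norm of $C(k_2)$: by the matrix description in Section \ref{triangularfactorization}, $k_2\in W^{1/2}$ iff $C(k_2)$ is Hilbert--Schmidt, and $C(k_2)$ is built from the Fourier coefficients of $c$. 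Using Proposition \ref{gammadelta} together with the crude partition estimate \eqref{coefficient} already carried out in the proof of Lemma \ref{weaklimit}, I would show $\sum_n n|c_n|^2<\infty$ is dominated by a convergent series in $\|\zeta\|_{l^2}$ and the weighted norm $\|\zeta\|_{w^{1/2}}^2=\sum n|\zeta_n|^2$; the leading term is the linear-in-$\zeta$ part (which contributes exactly $\|\zeta\|_{w^{1/2}}^2$ up to the convergent product $\prod(1+|\zeta_k|^2)$), and the higher multilinear terms are lower order. For (2), once $c,d\in W^{1/2}\subset VMO$, Theorem \ref{hartmann} gives that $B(k_2)$, $C(k_2)$ are compact and $A(k_2)$, $D(k_2)$ are Fredholm of index $0$; combined with the injectivity argument from the proof of Theorem \ref{SU(2)theorem1smooth} (which only used that $c,d$ do not simultaneously vanish in $\Delta$ and $d(0)>0$, both inherited from the $N\to\infty$ limit), this makes $A(k_2)$ and $A_1(k_2)$ invertible. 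Invertibility of $A(k_2)$ for the measurable loop $k_2$ with $\det k_2\le1$, via the determinant-line/Fredholm bookkeeping of Theorem \ref{det1} and Remarks \ref{perspective}, forces $\det k_2=1$; alternatively, the cleanest argument is that $\det(A^*A(k_2))=\lim_N\det(A_N(k_2))$ equals $\prod(1+|\zeta_n|^2)^{-n}>0$, so $A(k_2)$ is invertible, and since $M_{k_2}$ is a contraction on $\mathcal H$ with invertible Toeplitz operator it must be unitary. Thus $k_2\in W^{1/2}(S^1,SU(2))$ and $(a_2)$ holds.

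Next, $(a_2)\implies(c_2)$. Given $k_2$ as in $(a_2)$ with $c,d\in W^{1/2}$, I would rerun verbatim the Hardy-space operator argument from the proof of Theorem \ref{SU(2)theorem1smooth}: set $a_2=d(0)^{-1}$, $\gamma_2=a_2c$, $\delta_2=a_2d$, and solve \eqref{eqn2} for $x^*\in H^-$ using that the operator $T\colon H^-\to H^-\oplus H^-$, $x^*\mapsto((cx^*)_-,(dx^*)_-)$, is injective with closed image (injectivity and closed range follow from $|c|^2+|d|^2=1$ and the non-vanishing hypothesis exactly as before, and these facts do not require any extra regularity). This produces $x^*$, hence $\alpha_2,\beta_2$ from \eqref{eqn1}, giving a triangular factorization. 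The regularity claim $x\in W^{1/2}$ is the delicate point, since $W^{1/2}$ is \emph{not} a decomposing algebra, so Corollary \ref{decomposing} does not apply directly. Here I would argue through Hilbert--Schmidt norms of Hankel operators instead: $x\in W^{1/2}$ iff the Hankel operator with symbol $x$ (equivalently $x^*$) is Hilbert--Schmidt, and from \eqref{eqn2} $x^*$ is obtained by inverting $T$ on the finite-codimension complement of its cokernel; I would show $T$ has closed range \emph{as a map between the appropriate $W^{1/2}$-type Hilbert spaces} — or, more robustly, use \eqref{eqn0}–\eqref{eqn1} together with the fact that $\alpha_2=d^*-x^*c$ and $\beta_2=-c^*-x^*d$ lie in $H^0(\Delta)$ to express $x^*$ via Hankel-type operators with $W^{1/2}$ symbols $c,d$, and bound its $w^{1/2}$-norm. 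The identity $det(A^*A(k_2))=det(1+\dot B^*\dot B(x))^{-1}$ from Theorem \ref{det1} pins down $\sum n|x_n|^2$ in terms of a Fredholm determinant that is finite precisely when $x\in w^{1/2}$; since we already know the left side equals $\prod(1+|\zeta_n|^2)^{-n}<\infty$ whenever $\zeta\in w^{1/2}$, this is exactly where finiteness of $\|x\|_{w^{1/2}}$ comes from. This also hands us one direction of the bijection.

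Finally, $(c_2)\implies(b_2)$ and the bijection. As in the proof of Theorem \ref{SU(2)theorem1}, the relation between $x^*$ and $\zeta$ is triangular in the sense displayed there: $x_1^*(\zeta_1,\zeta_2,\dots)=\zeta_1\prod_{k\ge2}(1+|\zeta_k|^2)+(\text{terms with a factor }\zeta_j,\,j\ge2)$ and $\zeta_n(x_1,x_2,\dots)=\zeta_1(x_n,x_{n+1},\dots)$. I would read off from this that the map $\zeta\mapsto x$ is of the form (invertible diagonal/product factor)$\,\times\,$(formal power series with no constant term and controlled multilinear tails), so it can be inverted by a fixed-point/implicit-function argument in $w^{1/2}$: the linearization at $\zeta=0$ is the identity on $w^{1/2}$ (the linear part is $x_n\approx\zeta_n$), and the partition estimate \eqref{coefficient} shows the nonlinear remainder is locally Lipschitz on $w^{1/2}$ with small constant near $0$, while away from $0$ one bootstraps using that both $\|x\|_{w^{1/2}}$ and $\|\zeta\|_{w^{1/2}}$ are simultaneously finite (Theorem \ref{det1}) and each controls the other through the determinant identity. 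This establishes that every $x\in w^{1/2}$ arises from a unique $\zeta\in w^{1/2}$, giving $(c_2)\implies(b_2)$ and the asserted bijection; transporting through $\sigma$ gives the $k_1$, $\eta\leftrightarrow y$ statement. The main obstacle I anticipate is precisely the $W^{1/2}$ regularity of the factors — proving $x\in w^{1/2}$ (and symmetrically $y\in w^{1/2}$) and that the correspondence $\zeta\leftrightarrow x$ restricts to a \emph{bijection} of $w^{1/2}$, since at the critical exponent one loses the Banach-algebra/decomposing-algebra tools and must instead run everything through Hilbert--Schmidt (Hankel/Toeplitz) norms and the Fredholm-determinant identity of Theorem \ref{det1}. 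The unitarity of $k_2$ for $\zeta\in w^{1/2}$, flagged as a ``crucial lingering issue'' after Lemma \ref{weaklimit}, is the other genuinely new input, but it follows cleanly once $k_2\in VMO$ and $\det(A^*A(k_2))>0$ are in hand.
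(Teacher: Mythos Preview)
Your cyclic strategy matches the paper's in spirit, but two of the three implications have genuine gaps, and the paper proceeds quite differently at those points.

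For $(b_2)\Rightarrow(a_2)$: the direct estimate of $\sum_n n|c_n|^2$ from the partition bound \eqref{coefficient} does not work. That bound controls $|\delta_{2,n}|$ by $\sum_{\mathcal P(n)}|\zeta|_{l^2}^{2l}$, which grows like the partition function in $n$ and gives no $w^{1/2}$ control; the ``higher multilinear terms are lower order'' claim is exactly what is hard, and \eqref{coefficient} is too crude for it. Your unitarity argument also presupposes that $c,d$ do not simultaneously vanish in $\Delta$ (needed for injectivity of $A(k_2)$), but this is not delivered by Lemma \ref{weaklimit} and is part of what must be proved. The paper avoids both issues by going through $x$ first: using Proposition \ref{positivity} of the Appendix, the coefficients of $x(\zeta)$ are nonnegative when $\zeta\ge 0$, so $\mathrm{tr}(\dot B(x^{(N)})\dot B(x^{(N)})^*)$ increases monotonically and is bounded by Theorem \ref{det1}; this gives $(x_n)\in w^{1/2}$ for $\zeta\ge 0$, and then for general $\zeta$ by domination $|x(\zeta)|\le x(|\zeta|)$. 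With $x\in w^{1/2}$ in hand, the paper reconstructs $k_2$ from the explicit formulas (\ref{xtok1})--(\ref{xtok3}); unitarity then drops out from an analytic-continuation argument for $\alpha_2\delta_2-\beta_2\gamma_2$, and only afterwards do (\ref{append1})--(\ref{6.73}) give $k_2\in W^{1/2}$ and the non-simultaneous-vanishing of $c,d$. The positivity/monotonicity input from Proposition \ref{positivity} is the key idea you are missing.

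For $(c_2)\Rightarrow(b_2)$: an implicit-function argument in $w^{1/2}$ is only local (your linearization is the identity at $\zeta=0$, but you give no mechanism to globalize). The paper's argument is a compactness argument: truncate $x$ to $x^{(N)}$, use Theorem \ref{det1} in the form (\ref{det3eqn}) to bound $\{\zeta^{(N)}\}$ in $w^{1/2}$, then exploit compactness of the embedding $w^{1/2}\hookrightarrow l^2$ to extract a limit in $l^2$, identify it with $k_2$ via Lemma \ref{weaklimit} (uniqueness), and finally upgrade to $\zeta\in w^{1/2}$ because (\ref{det3eqn}) converges. Your $(a_2)\Rightarrow(c_2)$ is closer to the paper, but note that the paper does not invoke the determinant identity of Theorem \ref{det1} here; it uses the operator identities (\ref{append1}), (\ref{append2}), (\ref{6.73}) directly: $k_2\in W^{1/2}$ makes $A(k_2)A(k_2)^*-1$ trace class, hence $C((k_2)_-)$ is Hilbert--Schmidt, and the explicit matrix (\ref{6.73}) translates this into $(x_n)\in w^{1/2}$.
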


\begin{remarks}\label{continuityremark}(a) If $\sum \vert \zeta_n \vert<\infty$, then
the products in $(b_i)$ converge absolutely and uniformly in $z\in
S^1$, and the limits are $C^0$. However $\sum n\vert
\zeta_n\vert^2<\infty$ does not imply absolute convergence of the
sum of the $\{\zeta_n\}$ and vice versa; similarly $C^0$ does not
imply $W^{1/2}$ and vice versa. It is for this reason that the
weak notion of convergence in Lemma \ref{weaklimit} is used in
$(b_i)$.

(b) In connection with $(b_i)$, note that $z^n$ converges to zero
uniformly on compact subsets of $\Delta$, but $\vert z^n \vert=1$,
for all $n$, on $S^1$. Thus it is not evident in $(b_i)$ that
$k_2$ is unitary; this is the problem which we could not resolve
in Lemma \ref{weaklimit}.
\end{remarks}

\begin{proof} The two sets of conditions are intertwined by $\sigma$.
We will first show $(a_2)$ is equivalent to $(c_2)$; we will then
show these conditions are equivalent to $(b_2)$.

Suppose that $k_2$ satisfies the conditions in $(a_2)$, except
that at the outset we only assume $k_2$ is measurable. In the
course of proving Theorem \ref{SU(2)theorem1smooth}, we showed
that $k_2$ has a triangular factorization as in $(c_2)$, where
\begin{equation}\label{ktox}\left(\begin{array}{c}
x^{*}\\
0\end{array} \right)=D(k_2^{*})^{-1}\left(\begin{array}{c}
(d^*)_{-}\\
-c^{*}\end{array} \right)\end{equation} (and the other factors are
given explicitly by (a) of Theorem \ref{factorization}). In
particular $x^*\in L^2$.

For the Birkhoff factorization of $k_2$,
$$(k_2)_-=\left(\begin{matrix}1&x^*\\0&1\end{matrix}\right).$$
Because $M_{k_2}$ is unitary,
\begin{equation}\label{append1}A(k_2)A(k_2)^*=(1+Z(k_2)^*Z(k_2))^{-1},\end{equation}
where $Z(k_2):=C(k_2)A(k_2)^{-1}$. A matrix calculation (see
(5.13) and (5.14) of \cite{P}, and note that in \cite{P}, $g=k_2$,
and $x$ is written in place of $x^*$) shows that
\begin{equation}\label{append2}Z(k_2)=Z((k_2)_-)=C((k_2)_-),\end{equation}
and relative to the basis (\ref{basis}), $C((k_2)_-)$ is
represented by the matrix
\begin{equation}\label{6.73}\left(\begin{matrix} .&0&x_n&.&0&x_3&0&x_2&0&x_1\\
.&0&0&.&0&0&0&0&0&0\\
.&.&.&&&x_4&0&x_3&0&x_2\\
&&&&&0&0&0&0&0\\
.&&&&&&&&0&x_3\\
.&&.&&&.&.\\
.&&&&&.&&&0&x_n\\
0&0&0&0&..&&&0&0&0\end{matrix} \right). \end{equation}

Now suppose that $k_2\in W^{1/2}$. In this case $A(k_2)A(k_2)^*$
is the identity plus trace class. By (\ref{append1}) and
(\ref{append2}), $C((k_2)_-)$ is Hilbert-Schmidt. By (\ref{6.73}),
$x^*\in W^{1/2}$.

Conversely, given $x^*\in W^{1/2}$, by Lemma 4 of \cite{P}, we can
explicitly compute $k_2$ and the corresponding triangular
factorization:
\begin{equation}\label{xtok1}\gamma_2
=-((1+\dot C(zx^{*})\dot C(zx^{*})^{*})^{-1}(x^{*}))^{*}, \qquad
\delta^{*}_2=1+\dot C(x^{*})\gamma_2\end{equation}
\begin{equation}\label{xtok2}\alpha_2=a_2^{-2}(1-\dot A(x^*)(\gamma_2)), \qquad
\beta=-a_2^{-2}\dot A(x^*)(\delta_2)\end{equation} and
\begin{equation}\label{xtok3}a_2^2=\frac{det(1+\dot C(x^*)^*\dot C(x^*))}{det(1+\dot C(zx^*)^*\dot C(zx^*))}\end{equation}
In the derivation of the equations (\ref{xtok1}) and (\ref{xtok2})
in Lemma 4 of \cite{P}, the fact that $k_2$ is unimodular is not
used explicitly; the derivation only uses
$(k_2)_{(1,1)}=(k_2)_{(2,2)}^*$ and
$(k_2)_{(1,2)}=-(k_2)_{(2,1)}^*$. However, because $\alpha_2
\delta_2-\beta_2 \gamma_2\in H(\Delta)$, and has real values
$\vert c \vert^2+\vert d \vert^2$ on $S^1$, $\alpha_2
\delta_2-\beta_2 \gamma_2$ extends holomorphically to
$\hat{\mathbb C}$. Since it equals $1$ at $z=0$, it is identically
$1$. This shows that unimodularity follows automatically. This
determines a unitary $k_2$ with measurable coefficients. The
calculations (\ref{append1}), (\ref{append2}), and (\ref{6.73})
imply that $k_2\in W^{1/2}$. Thus $(a_2)$ is equivalent to
$(c_2)$.

Lemma \ref{weaklimit} implies that if $(\zeta_n)\in l^2$, then
$k_2$ defined as in $(b_2)$ is in $Meas(S^1,GL(2,\mathbb C))$.
Now suppose that $\zeta\in w^{1/2}$. By Theorem \ref{det1}
\begin{equation}\label{det6}det \vert A(k_2^{(N)})\vert^2=det(1+\dot
B(x^{(N)})\dot B(x^{(N)})^*)^{-1} =\prod_{n=1}^N (1+\vert \zeta_n
\vert^2)^{-n},\end{equation} and this converges to a positive
number as $N\to \infty$.

First suppose that $\zeta\ge 0$. Proposition \ref{positivity} of
the Appendix implies that the coefficients of $x(\zeta)^{(N)}$ are
nonnegative and converge up to the coefficients of $x(\zeta)$.
This implies that the matrix entries of $B(x^{(N)}\dot
B(x^{(N)})^*$ will be nonnegative and  converge in a monotone way
to those for $\dot B(x)\dot B(x)^*$. Thus the sequence
$tr(B(x^{(N)}\dot B(x^{(N)})^*)$, which is bounded because
(\ref{det6}) converges, will converge to $tr(\dot B(x)\dot
B(x)^*)$. This implies that $(x_n)\in w^{1/2}$. For a general
$\zeta\in w^{1/2}$, since the coefficients for $x(\vert \zeta
\vert)$ dominate those for $x(\zeta)$ we can conclude in the same
way that $(x_n)\in w^{1/2}$. We can now obtain a triangular
factorization for $k_2$ using (\ref{xtok1})-(\ref{xtok3}). As we
argued in the paragraph following (\ref{xtok3}), this
automatically implies that $k_2$ is unitary. The calculations
(\ref{append1}), (\ref{append2}), and (\ref{6.73}) imply that
$k_2\in W^{1/2}$ and $A(k_2)$ is invertible. Since $A(k_2)$ is
$1-1$, this implies that $c$ and $d$ do not simultaneously vanish
in $\Delta$ (see the Note in the second paragraph following
Proposition \ref{gammadelta}). Thus $(b_2)$ implies $(a_2)$.

Suppose that we are given $k_2$ and $x$ as in $(a_2)$ and $(c_2)$.
Let $x^{(N)}=\sum_{n=1}^N x_n z^n$, and let $\zeta^{(N)}$ and
$k_2^{(N)}$ denote the corresponding objects. Theorem \ref{det1}
implies that
\begin{equation}\label{det3eqn}det(1+\dot{B}(x^{(N)})\dot{B}(x^{(N)})^*)
=\prod_{n=1}^N(1+\vert\zeta_n^{(N)}\vert^2)^n.\end{equation}
Because $x\in W^{1/2}$, the sequence of numbers (\ref{det3eqn})
has a limit. Therefore the sequence $\{\zeta^{(N)}\}$ is bounded
in $w^{1/2}$. Because the inclusion $w^{1/2}\to l^2$ is a compact
operator, there are subsequences which converge in $l^2$. By Lemma
\ref{weaklimit} these limiting sequences correspond to $k_2$. Thus
there is a unique limiting sequence, $\{\zeta_n\}\in l^2$. Since
(\ref{det3eqn}) has a limit, $\zeta\in w^{1/2}$. Thus $(a_2)$ and
$(c_2)$ imply $(b_2)$.

This completes the proof.

\end{proof}

\section{Proof of Theorem \ref{U(2)theorem}, and
Generalizations}\label{SU(2)caseII}

Part (a) of Theorem \ref{U(2)theorem} is obvious.  We will deduce
the remaining parts of Theorem \ref{U(2)theorem} from the
following

\begin{theorem}\label{maintheorem}Assume $s>0$ and nonintegral, or
$s=\infty$. For $g\in C^s(S^1,SU(2))$, the following are
equivalent:

(i) $g$ has a triangular factorization $g=lmau$, where $l$ and $u$
have $C^s$ boundary values.

(ii) $g$ has a factorization $g=k_1^*\lambda k_2$, where the
$k_i\in C^s(S^1,SU(2))$ satisfy the equivalent conditions $(a_i)$
and $(c_i)$ of Theorem \ref{SU(2)theorem1smooth}, and $\lambda\in
C^s(S^1,T)_0$.

\end{theorem}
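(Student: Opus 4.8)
The plan is to reduce the factorization $g = k_1^* \lambda k_2$ to the triangular factorization $g = lmau$ via the already-established equivalences in Theorem \ref{SU(2)theorem1smooth}, exploiting the fact that for $k_i$ satisfying $(a_i)$/$(c_i)$ the relevant Toeplitz operators are invertible. First I would prove $(ii)\implies(i)$. Given $g = k_1^*\lambda k_2$, I would write out the triangular factorizations of $k_1$ and $k_2$ from $(c_1)$ and $(c_2)$ of Theorem \ref{SU(2)theorem1smooth}. Taking the adjoint (i.e. applying $f\mapsto f^*$) to the factorization of $k_1$ turns its lower-triangular factor into an upper-triangular $H^0(\Delta)$ factor and its polynomial factor into an $H^0(\Delta^*)$ factor; so $k_1^*$ is (holomorphic-in-$\Delta^*$ factor)$\cdot$(diagonal)$\cdot$(unipotent upper triangular at $z=0$, but NOT holomorphic in $\Delta$ — it is a polynomial in $z^{-1}$... ). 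Here one has to be careful: the cleanest route is to observe that $k_1^*$, being of the form in $(a_1)$ with roles of $\Delta,\Delta^*$ swapped, has its own Birkhoff/triangular factorization of the opposite type, with an $H^0(\Delta)$ $u$-factor that is unipotent \emph{lower} triangular at $0$ — equivalently, apply $\sigma$-type bookkeeping. Then $g = k_1^* \lambda k_2$ is a product (holo in $\Delta^*$)(diagonal)(holo in $\Delta$, upper-unipotent at $0$), and one reads off $l, m, a, u$ after absorbing the constant diagonal matrices and the $S^1$-valued function $\lambda$ into $m$. Since each piece has $C^s$ boundary values (the $k_i$ do by Theorem \ref{SU(2)theorem1smooth}, and $\lambda \in C^s$), so do $l$ and $u$. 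Uniqueness of triangular factorization (the Proposition in Section \ref{triangularfactorization}) makes this assignment unambiguous.

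For $(i)\implies(ii)$ — the substantive direction — I would start from $g = lmau$ with $l, u$ having $C^s$ boundary values, and construct $k_1, k_2$ by a ``completing to unitary'' argument. The idea: from the Birkhoff data $g_- = l$, $g_0 = ma$, $g_+ = u$, I want to split $g$ multiplicatively into a piece absorbing the $\Delta^*$-holomorphy (to become $k_1^*$) and a piece absorbing the $\Delta$-holomorphy (to become $k_2$), with a diagonal $S^1$-valued (actually $C^s(S^1,T)_0$-valued) correction $\lambda$ in between. Concretely, since $g$ is unitary, $g^{-1} = g^*$, and one can try $k_2 := $ the normalized unitary whose first column (in $H^0(\Delta)$) is built from $u$ suitably rescaled so that $\det = 1$ and the $(2,1)$ entry vanishes at $0$ — this is exactly the recipe ensuring $k_2$ satisfies $(a_2)$. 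The non-simultaneous-vanishing condition in $(a_2)$ should follow from invertibility of $A_1(g)$ (equivalently that the $(1,1)$-entry of $g_0$ is nonzero), which is part of having a triangular factorization. Symmetrically $k_1$ comes from $l$, and one checks $g (k_2)^{-1} (k_1^*)^{-1}$ is diagonal and $S^1$-valued; unitarity forces it into $T$, and $C^s$ regularity is inherited since all the operations (Corollary \ref{decomposing}, products, inverses of $C^s$ unitary loops) preserve $C^s$. The $s=\infty$ case follows by applying the $C^s$ statement for all finite non-integral $s$.

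The main obstacle I anticipate is the \emph{existence and correct normalization} of the ``completion to a unitary'' $k_2$ — i.e. showing that from the upper-triangular Birkhoff factor $u$ (or rather from $g$ itself together with the invertibility of $A(g)$ and $A_1(g)$ guaranteed by triangular factorization) one can genuinely produce a \emph{unitary} $k_2 \in C^s(S^1,SU(2))$ of the form $(a_2)$ with $c(0)=0$, $d(0)>0$, and no common zero of $c,d$ in $\Delta$. This is the analogue of the delicate point already flagged in Lemma \ref{weaklimit} (unitarity is not automatic) and in Theorem \ref{SU(2)theorem1W}. Here, however, we are in the favorable $C^s$ (or $C^\infty$) setting where $A(g), A_1(g)$ are invertible, so the construction should go through: one defines $(d, -c)$ as a normalized solution of a Toeplitz equation — mirroring formula (\ref{factorformula}) and the use of $D(k_2)^{-1}$ in the proof of Theorem \ref{SU(2)theorem1smooth} — and then the unimodularity/unitarity argument of the paragraph following (\ref{xtok3}) (holomorphic extension to $\hat{\mathbb C}$ of $|c|^2+|d|^2$, value $1$ at $0$) closes the gap. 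Verifying that the leftover diagonal factor $\lambda$ lands in the \emph{identity component} $C^s(S^1,T)_0$ (rather than a nontrivial winding class) requires tracking degrees through the factorization, using $\det g = 1$ and the normalizations $a(0)>0$, $d(0)>0$.
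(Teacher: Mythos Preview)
Your plan is in the right direction and broadly parallels the paper's argument, but you are vague precisely where the paper is explicit, and your ``main obstacle'' is misdiagnosed.

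For $(ii)\Rightarrow(i)$ the paper does exactly what you suggest: it writes $\lambda=\exp(-\chi^*+\chi_0+\chi)$ and multiplies out the triangular factorizations of $k_1^*$ and $k_2$ to read off $l,m,a,u$ explicitly (formulas (\ref{lmatrix}),(\ref{umatrix})). So this direction is fine.

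For $(i)\Rightarrow(ii)$, the substantive step is not unitarity of $k_2$ via the argument following (\ref{xtok3}); that argument starts from $x^*$ and is not the right tool here. The paper's key move is an \emph{outer-function extraction}: from (\ref{lmatrix}),(\ref{umatrix}) one sees that $u_{21}=\gamma_2 e^{-\chi}$, $u_{22}=\delta_2 e^{-\chi}$, hence on $S^1$
\[
|u_{21}|^2+|u_{22}|^2=a_2^2\,e^{-2\mathrm{Re}\,\chi},
\]
and similarly for $l_{11},l_{21}$. This \emph{determines} $\mathrm{Re}\,\chi$ (and then $\chi$, $a_1$, $a_2$) directly from $l,u$; unitarity of $k_2$ is then automatic, since $c=a_2^{-1}e^{\chi}u_{21}$, $d=a_2^{-1}e^{\chi}u_{22}$ satisfy $|c|^2+|d|^2=1$ by construction. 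Your phrase ``suitably rescaled'' hides exactly this step. Moreover, the two modulus equations (from $l$ and from $u$) must be \emph{consistent}; the paper checks this using $g^*g=1$ (equation (\ref{1,1})), a point you do not mention.

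Finally, your ``one checks $g(k_2)^{-1}(k_1^*)^{-1}$ is diagonal'' is not a routine check. By construction, $g$ and $k_1^*\lambda k_2$ share the same $m,a,l_{11},l_{21},u_{21},u_{22}$, but one must show these data determine $g$. The paper proves this as a separate Proposition, expressing $l_{12},l_{22},u_{11},u_{12}$ explicitly in terms of $l_{11},l_{21},u_{21},u_{22}$ via the unitarity relations (\ref{1,1})--(\ref{2,2}). Without this, your argument has a genuine gap: you have not shown the ``leftover'' factor is diagonal. Once $\lambda=e^{-\chi^*+\chi_0+\chi}$ is exhibited as an exponential of a $C^s$ function, membership in the identity component is immediate and needs no degree-tracking.
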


\begin{proof}We will use the notation in (\ref{factorization}) for $g$,
and the notation in Theorem \ref{SU(2)theorem1smooth} for the
entries of the $k_i$ and their triangular factorizations. Without
much comment, we will use the fact that $C^s$ is a decomposing
algebra, so that factors in various decompositions will remain in
$C^s$.

We proved that (ii) implies (i) in \cite{P} (see the proof of
Theorem 7); we briefly recall the calculation. Suppose that $g\in
C^s(S^1,SU(2))$ can be factored as
$g=k_1^*\left(\begin{matrix}\lambda&0\\0&\lambda^{-1}
\end{matrix}\right) k_2$, as in (ii). We can write
$\lambda=exp(-\chi^*+\chi_0+\chi)$, where $\chi_0 \in i\mathbb R$
and $\chi\in H^0(\Delta)$, $\chi(0)=0$, with $C^s$ boundary
values. Then $g$ has triangular factorization of the form
\begin{equation}g=l(g)\left(\begin{matrix}e^{\chi_0}a_1a_2&0\\0&(e^{\chi_0}a_1a_2)^{-1}\end{matrix}\right)u(g),\end{equation}
where $m_0=e^{\chi_0}\in S^1$, $a_0=a_1a_2>0$,
\begin{equation}\label{lmatrix}l(g):=\left(\begin{matrix}l_{11}&l_{12}\\l_{21}&l_{22}\end{matrix}\right)
=\left(\begin{matrix} \alpha_1^{*}&\gamma_1^{*}\\
\beta_1^{*}&\delta_1^{*}\end{matrix}
\right)\left(\begin{matrix}e^{-\chi^{*}}&0\\0&e^{\chi^*}\end{matrix}\right)\left(\begin{matrix}
1&a_1^2e^{2\chi_0}P_-(ye^{2\chi^{*}}+x^*e^{2\chi})\\
0&1\end{matrix} \right)\end{equation} and
\begin{equation}\label{umatrix}u(g):=\left(\begin{matrix}u_{11}&u_{12}\\u_{21}&u_{22}\end{matrix}\right)
= \left(\begin{matrix}
1&a_2^{-2}e^{-2\chi_0}P_+(ye^{2\chi}+x^*e^{2\chi^*})\\0&1\end{matrix}\right)
\left(\begin{matrix}e^{\chi}&0\\0&e^{-\chi}\end{matrix}\right)
\left(\begin{matrix} \alpha_2&\beta_2\\
\gamma_2&\delta_2\end{matrix}\right)\end{equation} Thus (i) is
implied by (ii).

Now suppose that $g$ has triangular factorization $g=lmau$ as in
(i). We must solve for $k_1$, $\chi$, and $k_2$. The equation
(\ref{lmatrix}) implies
\begin{equation}\label{1}l_{11}=\alpha_1^*exp(-\chi^*),\quad l_{21}=\beta_1^*exp(-\chi^*)\end{equation}
and (\ref{umatrix}) implies
\begin{equation}\label{2}u_{21}=\gamma_2exp(-\chi),\quad u_{22}=\delta_2exp(-\chi)\end{equation}
The special forms of $k_1$ and $k_2$ imply that on $S^1$,
\begin{equation}\vert \alpha_1 \vert ^2+\vert \beta_1 \vert ^2=a_1^{-2}.\end{equation}
\begin{equation}\vert \delta_2 \vert ^2+\vert \gamma_2 \vert ^2=a_2^{2}.\end{equation}
Therefore on $S^1$
\begin{equation}\vert l_{11} \vert ^2+ \vert l_{21} \vert ^2= a_1^{-2} exp(-2Re(\chi))\end{equation}
\begin{equation}\vert u_{21} \vert ^2+\vert u_{22} \vert^2=a_2^2exp(-2Re(\chi))\end{equation}
This implies that on $S^1$ we must have
\begin{equation}Re(\chi)=log(a_1^{-1})+log((\vert l_{11} \vert ^2+ \vert l_{21} \vert
^2)^{-1/2})=log(a_2)+log((\vert u_{21} \vert ^2+\vert u_{22} \vert
^2)^{-1/2}).\end{equation} Assuming that the obvious consistency
condition is satisfied, this pair of equations determines $\chi$
and the $a_i$: because $\chi$ must be holomorphic in the disk and
vanish at $z=0$, the average of $Re(\chi)$ around $S^1$ must
vanish, hence
\begin{equation}a_1=exp(-\frac1{4\pi}\int_{S^1}log(\vert l_{11} \vert
^2+ \vert l_{21} \vert ^2)d\theta),\end{equation}
\begin{equation}
a_2=exp(\frac1{4\pi}\int_{S^1}log(\vert u_{21} \vert ^2+\vert
u_{22} \vert ^2)d\theta),\end{equation} and
\begin{equation}Im(\chi)=iRe(\chi)_{-}-iRe(\chi)_+.\end{equation}

To see that $\chi$ and the $a_i$ are well-defined, we must check
that
\begin{equation}\label{consistency}\vert l_{11} \vert ^2+ \vert l_{21} \vert ^2=
(a_1a_2)^{-2}(\vert u_{21} \vert ^2+\vert u_{22}
\vert^2),\end{equation} as functions on $S^1$. Because $g^*g=1$,
$l^*l=(a(g)u)^{-*}(a(g)u)^{-1}$, on $S^1$. This implies three
independent equations

\begin{equation}\label{1,1}\vert l_{11}\vert^2+\vert l_{21}\vert^2=a_0^{-2}(\vert
u_{22}\vert^ 2+\vert u_{21}\vert^2)\end{equation}
\begin{equation}\label{2,1}l_{11}^{*}l_{12}+l_{21}^{*}l_{22}=-(u_{22}^{*}u_{12}+u_{21}^{*}
u_{11})\end{equation}
\begin{equation}\label{2,2}\vert
l_{12}\vert^2+\vert l_{22}\vert^2=a_0^2(\vert u_{12}\vert^2 +\vert
u_{11}\vert^2)\end{equation} for the $(1,1)$, $(1,2)$ (or
$(2,1)$), and $(2,2)$ entries, respectively. The $(1,1)$ entry
implies the consistency condition (\ref{consistency}).

Together with (\ref{1}) and (\ref{2}), this completely determines
the $k_i$: \begin{equation}a(z)=a_1exp(\chi)l_{11}^*,\quad
b(z)=a_1exp(\chi)l_{21}^*,\end{equation}
\begin{equation}
c(z)=a_2^{-1}exp(\chi)u_{21},\quad d(z)=a_2^{-1}exp(\chi)u_{22}
\end{equation} Because $l^*$ is invertible at all points of
$\Delta$, the entries $a$ and $b$ of $k_1$ do not simultaneously
vanish. Similarly, because $u$ is invertible, the entries $c$ and
$d$ do not simultaneously vanish. The fact that these are $C^s$ in
the appropriate sense follows from the continuity of the
projections $P_{\pm}$ on $C^s$. Thus by Theorem
\ref{SU(2)theorem1smooth} (and the ensuing Remark (b)) the $k_i$
have appropriate triangular factorizations.

We have now solved for $k_i$ and $\chi$. We have also observed
that the diagonal term of $g$ determines $exp(\chi_0)$, so
$\lambda$ is determined as well.

We now must show that $g=k_1^{-1}\lambda k_2$. From the
definitions of $k_i$ and $\lambda$, both sides of this equation
have the same $m$, $a$, $l_{11}$, $l_{21}$, $ u_{21}$, and
$u_{22}$ coordinates. The proof is completed by the next
Proposition, which is of intrinsic interest.
\end{proof}

\begin{proposition}\ Suppose that $g$ has a triangular
factorization as in (\ref{factorization}). Then
\[l_{12}=-l_{11}P_{-}(\frac {l_{21}^{*}+u_{21}^{*}}{\vert l_{11}\vert^
2+\vert l_{21}\vert^2})\]
\[l_{22}=1-l_{21}P_{-}(\frac {l_{21}^{*}+u_{21}^{*}}{\vert l_{11}
\vert^2+\vert l_{21}\vert^2})\]
\[u_{12}=-a_0^{-2}u_{22}P_{+}(\frac {l_{21}^{*}+u_{21}^{*}}{\vert
l_{11}\vert^2+\vert l_{21}\vert^2})\]
\[u_{11}=1-a_0^{-2}u_{21}P_{+}(\frac {l_{21}^{*}+u_{21}^{*}}{\vert
l_{11}\vert^2+\vert l_{21}\vert^2})\] In particular $g$ is
determined by $m$, $a$, $l_{11}$, $l_{21}$, $ u_{21}$, and
$u_{22}$.
\end{proposition}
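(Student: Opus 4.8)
The plan is to extract the four missing entries from the unitarity of $g$ together with the Hardy‐space (holomorphy) structure of the factors. As in the proof of Theorem~\ref{maintheorem}, the relation $g^{*}g=1$ on $S^{1}$ is equivalent to the three scalar identities $(\ref{1,1})$–$(\ref{2,2})$ relating the entries of $l$ and $u$; I would also use $gg^{*}=1$ (equivalently, apply the same identities to $g^{-1}=g^{*}$) and the determinant conditions $\det l=\det u=1$. The point is that $(\ref{2,1})$ together with $\det l=1$ is a linear system in the pair $(l_{12},l_{22})$ whose coefficient matrix has determinant $|l_{11}|^{2}+|l_{21}|^{2}$, so it determines $l_{12},l_{22}$ rationally in terms of $l_{11},l_{21}$ and the right‐hand side $-(u_{22}^{*}u_{12}+u_{21}^{*}u_{11})$; symmetrically, the ``mirror'' identity for $uu^{*}$ together with $\det u=1$ determines $u_{11},u_{12}$ in terms of $u_{21},u_{22}$ and an $l$‐expression. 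These two determinations are coupled, and the Hardy decomposition is what uncouples them.

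Concretely, I would first record that the denominator $|l_{11}|^{2}+|l_{21}|^{2}$ is a strictly positive function on $S^{1}$: it equals $a_{0}^{-2}(|u_{21}|^{2}+|u_{22}|^{2})$ by $(\ref{1,1})$, and $u_{21},u_{22}$ cannot vanish simultaneously since $\det u=1$. Hence $h:=(l_{21}^{*}+u_{21}^{*})/(|l_{11}|^{2}+|l_{21}|^{2})$ is a well‐defined function on $S^{1}$. Its key feature is the Fourier structure of the numerator: $l_{21}^{*}\in H^{0}(\Delta)$ because $l_{21}\in H^{0}(\Delta^{*})$, while $u_{21}^{*}\in H^{0}(\Delta^{*})$ with $u_{21}^{*}(\infty)=\overline{u_{21}(0)}=0$ because $u_{21}(0)=0$; so the two summands of the numerator lie in complementary Hardy subspaces. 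Splitting $h=P_{+}h+P_{-}h$, one expects $P_{-}h$ to carry the $u_{21}^{*}$–contribution and $P_{+}h$ the $l_{21}^{*}$–contribution, up to the mixing introduced by the positive denominator.

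The decisive step is to show that $l_{12}+l_{11}P_{-}(h)$ and $l_{22}-1+l_{21}P_{-}(h)$ both extend holomorphically across $S^{1}$. Substituting the rational expressions for $l_{12},l_{22}$ from the linear system, clearing the denominator by $(\ref{1,1})$, and using $(\ref{2,1})$–$(\ref{2,2})$ together with the fact that $g$ is $SU(2)$‐valued (so $g_{22}=g_{11}^{*}$ and $g_{21}=-g_{12}^{*}$), the only obstruction to holomorphy in $\Delta^{*}$ should be a ``wrong‐sided'' Fourier tail, which is precisely what $P_{-}$ removes; and one checks separately, using $l_{12}\in H^{0}(\Delta^{*})$ with $l_{12}(\infty)=0$ and $l_{22}\in H^{0}(\Delta^{*})$ with $l_{22}(\infty)=1$, that the same combination also lies in $H^{0}(\Delta)$. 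A function holomorphic on $\mathbb C\setminus S^{1}$, integrable across $S^{1}$, and vanishing at $\infty$ is identically $0$ (the removable‐singularity argument used to prove uniqueness of the Birkhoff factorization), which gives the first two formulas. The formulas for $u_{11},u_{12}$ follow either by running the same argument with the roles of $l$ and $u$ (equivalently $z\leftrightarrow z^{-1}$, $\Delta\leftrightarrow\Delta^{*}$) interchanged, or by applying the outer involution $\sigma$ of $(\ref{outer})$. The final assertion is then automatic: $m,a,l_{11},l_{21},u_{21},u_{22}$ determine $h$, hence $P_{\pm}(h)$, hence $l_{12},l_{22},u_{12},u_{11}$, and hence $g=lmau$.

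The step I expect to be the main obstacle is this holomorphic‐extension claim — verifying that $l_{12}+l_{11}P_{-}(h)$ genuinely has no Fourier tail on the ``wrong'' side. This requires careful bookkeeping of the normalizations ($l(\infty)$ unipotent lower‐triangular, $u(0)$ unipotent upper‐triangular), of the determinant conditions $\det l=\det u=1$ (which is what fixes the constants ``$1$'' and ``$0$'' in the four formulas and must be checked for consistency with them), and — unless one first reduces to $m_{0}=1$ — of the scalar factors $m_{0}^{\pm2}$ that appear off the diagonal in $(\ref{2,1})$. A convenient setting is $g\in C^{s}$, where all factors are $C^{s}$ by Corollary~\ref{decomposing} and the products of boundary values are unambiguously integrable; the identities then persist in the $W^{1/2}$ setting by approximation.
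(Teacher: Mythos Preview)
Your plan uses exactly the right ingredients --- the unitarity relations $(\ref{1,1})$--$(\ref{2,2})$, the determinant conditions $\det l=\det u=1$, and the Hardy splitting --- and your target identity $l_{12}+l_{11}P_{-}(h)=0$ is precisely what must be shown. But you have made the decoupling step harder than it is, and the ``main obstacle'' you flag is in fact a one--line computation once the equation is organized correctly.

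The paper does not try to show holomorphic extension of $l_{12}+l_{11}P_{-}(h)$ directly. Instead it uses the determinants to \emph{eliminate} $l_{22}$ and $u_{11}$ first (rather than treating $(\ref{2,1})$ and $\det l=1$ as a coupled linear system in $(l_{12},l_{22})$): writing $l_{22}=(1+l_{12}l_{21})/l_{11}$ and $u_{11}=(1+u_{12}u_{21})/u_{22}$ (under the standing assumption that $l_{11},u_{22}$ are nonvanishing) and substituting into $(\ref{2,1})$, then using $(\ref{1,1})$ to replace $|u_{21}|^{2}+|u_{22}|^{2}$ by $a_{0}^{2}(|l_{11}|^{2}+|l_{21}|^{2})$, collapses everything to the single scalar identity
\[
\frac{l_{12}}{l_{11}}\;+\;a_{0}^{2}\,\frac{u_{12}}{u_{22}}\;=\;-\,\frac{l_{21}^{*}+u_{21}^{*}}{|l_{11}|^{2}+|l_{21}|^{2}}\;=\;-h.
\]
Now the decoupling is automatic: $l_{12}/l_{11}$ is holomorphic in $\Delta^{*}$ and vanishes at $\infty$ (so lies in $H^{-}$), while $u_{12}/u_{22}$ is holomorphic in $\Delta$ (so lies in $H^{+}$). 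Applying $P_{-}$ and $P_{+}$ to the displayed equation therefore gives $l_{12}/l_{11}=-P_{-}(h)$ and $a_{0}^{2}\,u_{12}/u_{22}=-P_{+}(h)$ immediately, with no Liouville/removable--singularity argument needed; the remaining two formulas then come from the determinant relations. In other words, the ``holomorphic extension'' you were bracing for is replaced by the observation that the left--hand side of the displayed equation is already its own Hardy decomposition. Your route via holomorphic extension of $l_{12}+l_{11}P_{-}(h)$ would ultimately reduce to the same thing, but only after you had rediscovered this identity; as written, the proposal stops short of the step that actually closes the argument.
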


\begin{proof} We initially suppose that $l_{11}$ and $u_{22}$ are
nonvanishing. We can use the unimodularity of $l$ and $u$ to solve
for $l_{22}$ and $u_{11}$ in terms of $l_{12}$ and $u_{12}$.

The equation (\ref{2,1}) can be rewritten as
\[l_{11}^{*}l_{12}+l_{21}^{*}l_{22}+u_{22}^{*}u_{12}+u_{21}^{*}u_{
11}=\]
\[l_{11}^{*}l_{12}+l_{21}^{*}(1+\frac {l_{12}l_{21}}{l_{11}})+u_{
22}^{*}u_{12}+u_{21}^{*}(1+\frac {u_{12}u_{21}}{u_{22}})=0\] Using
(\ref{1,1}) this can be rewritten as
\[\frac {l_{12}}{l_{11}}+a^2_0\frac {u_{12}}{u_{22}}=-\frac {l_{2
1}^{*}+u_{21}^{*}}{\vert l_{11}\vert^2+\vert l_{21}\vert^2}\] by
applying $P_{\pm}$ to this equation, and solving, we obtain the
equations in the proposition.
\end{proof}

Suppose that $g\in C^s(S^1,SU(2))$, $s>1/2$, and $g$ has a
triangular factorization. By Theorem 7 of \cite{P},
\begin{equation}\label{det2}det(A^*A(g))=det(A^*A(k_1^{-1}))det(A^*A(\lambda))det(A^*A(k_2))\end{equation}
$$=\prod_{i=1}^{\infty}(1+\vert\eta_i\vert^2)^{-i}exp(-2\sum_{j=1}^{\infty}j\vert
\chi_j\vert^2)\prod_{k=1}^{\infty}(1+\vert \zeta_k
\vert^2)^{-k}.$$ These expressions make sense because $C^s \subset
W^{1/2}$ for $s>1/2$. In the remainder of this section, our goal
is to use these equalities to obtain a $W^{1/2}$ analogue of
Theorem \ref{maintheorem}, which also incorporates the condition
$(b_i)$. This involves some subtleties, because $W^{1/2}$
functions are not necessarily continuous.

Because $SU(2)$ is compact, $W^{1/2}(S^1,SU(2))$ is a separable
topological group. In contrast to the function spaces $C^s$,
$s>0$, $W^s$, $s>1/2$, and $L^{\infty}\cap W^{1/2}$, for the
function space $W^{1/2}$, the loop group $W^{1/2}(S^1,SU(2))$ is
not a Lie group, because $W^{1/2}(S^1,su(2))$ is not a Lie algebra
(whereas, e.g. $L^{\infty}\cap W^{1/2}(S^1,su(2))$ has a Lie
algebra structure). Moreover the inclusion
$C^{\infty}(S^1,SU(2))\subset W^{1/2}(S^1,SU(2))$ is dense and
presumably a homotopy equivalence (whereas this is false for the
$L^{\infty}\cap W^{1/2}$ topology). With respect to the $W^{1/2}$
topology, the operator-valued function
$$g \to\left(\begin{matrix}A(g)&B(g)\\C(g)&D(g)\end{matrix}\right)$$ is
continuous, provided the diagonal is equipped with the strong
operator topology, and the off-diagonal with the Hilbert-Schmidt
topology.

In reference to the following Lemma, we recall that the notion of
degree (or winding number) can be extended from $C^0$ to
$VMO(S^1,S^1)$, hence degree is well-defined for
$W^{1/2}(S^1,S^1)$ (see Section 3 of \cite{Br} for an amazing
variety of formulas, and further references, or pages 98-100 of
\cite{Pe}). Also given $\lambda \in W^{1/2}(S^1,S^1)$, we view
$\lambda$ as a multiplication operator on $H=L^2(S^1)$, with the
Hardy polarization. We write $\dot A(\lambda)$ for the Toeplitz
operator, and so on (with the dot), to avoid confusion with the
matrix case.

\begin{lemma}\label{abelian} There is an exact sequence of topological groups
$$0 \to 2\pi i\mathbb Z \to W^{1/2}(S^1,i\mathbb R)
\stackrel{exp}{\rightarrow}
W^{1/2}(S^1,S^1)\stackrel{degree}{\rightarrow} \mathbb Z \to 0.$$
Moreover $degree(\lambda)=-index(\dot A(\lambda))$.
\end{lemma}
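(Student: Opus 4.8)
The plan is to treat the three asserted facts in order: exactness of the sequence, and then the identity $\mathrm{degree}(\lambda) = -\mathrm{index}(\dot A(\lambda))$. Exactness at $2\pi i\mathbb Z$ is just injectivity of the inclusion. Exactness at $W^{1/2}(S^1,i\mathbb R)$ amounts to $\ker(\exp)=2\pi i\mathbb Z$, which holds pointwise a.e.\ (if $\exp(f)\equiv 1$ a.e.\ then $f$ takes values in $2\pi i\mathbb Z$ a.e., and since $f\in W^{1/2}\subset VMO$ is connected in the appropriate sense — or more simply, $f/(2\pi i)$ is an integer-valued $VMO$ function, hence constant). Exactness at the right, i.e.\ surjectivity of $\mathrm{degree}$ onto $\mathbb Z$, follows by exhibiting $z\mapsto z^n$ as a smooth (hence $W^{1/2}$) loop of degree $n$. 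The only substantive point is exactness in the middle, at $W^{1/2}(S^1,S^1)$: one must show that a loop $\lambda\in W^{1/2}(S^1,S^1)$ of degree $0$ has a logarithm in $W^{1/2}(S^1,i\mathbb R)$. First I would note that degree $0$ means $\lambda$ lifts to a real-valued function $\theta$ (i.e.\ $\lambda = e^{\theta}$) at the level of $VMO$ or even measurable functions; the issue is the regularity of $\theta$. I would invoke the characterization of the $W^{1/2}$ seminorm via the double integral $\iint |\lambda(s)-\lambda(t)|^2/|s-t|^2$ and compare it with the same expression for $\theta$: where $\lambda$ has degree $0$ one can choose the continuous-in-$VMO$ branch of $\theta$ so that $|\theta(s)-\theta(t)|$ is comparable to $|\lambda(s)-\lambda(t)|$ away from the diagonal (locally the map $\exp$ is bi-Lipschitz, and the degree-$0$ condition is exactly what rules out the global obstruction), giving $\theta\in W^{1/2}$. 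The continuity of all the maps in the relevant topologies is routine once the algebraic statements are in hand, using that $\exp$ is continuous $W^{1/2}\to W^{1/2}$ because $SU(2)$ (here $S^1$) is compact, as already noted in the text preceding the Lemma.

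For the final identity $\mathrm{degree}(\lambda) = -\mathrm{index}(\dot A(\lambda))$, I would first verify it on the dense, well-behaved subclass and then pass to the limit. On $\lambda(z)=z^n$ the Toeplitz operator $\dot A(z^n)$ is (for $n\ge 0$) the $n$-fold shift on $H^+$, which is injective with cokernel of dimension $n$, so $\mathrm{index}=-n=-\mathrm{degree}$; for $n<0$ one gets the adjoint statement. Next, for a general $\lambda \in W^{1/2}(S^1,S^1)$, I would use Theorem \ref{hartmann}: since $W^{1/2}\subset VMO$ and $\lambda\in L^\infty\cap VMO = QC$, the Hankel operators $\dot B(\lambda)$, $\dot C(\lambda)$ are compact and $\dot A(\lambda)$ is Fredholm of some index. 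The index is a homotopy invariant, constant on connected components of $QC(S^1,S^1)$; and $\mathrm{degree}$ is likewise locally constant on $W^{1/2}(S^1,S^1)$ by Lemma \ref{abelian}'s exact sequence (two loops of equal degree differ by something in the image of $\exp$, which is connected). So both $\lambda\mapsto \mathrm{degree}(\lambda)$ and $\lambda\mapsto -\mathrm{index}(\dot A(\lambda))$ are locally constant homomorphisms $W^{1/2}(S^1,S^1)\to\mathbb Z$ agreeing on the generators $z^n$; hence they agree. Alternatively, and perhaps cleaner, I would factor: writing $\lambda = z^n e^{f}$ with $f\in W^{1/2}(S^1,i\mathbb R)$ (exactness of the sequence), decompose $e^f = e^{f_+}e^{f_0}e^{f_-}$ using that $W^{1/2}$ functions split via $P_\pm$ into Hardy pieces up to the failure that $P_\pm$ is unbounded on $W^{1/2}$ — so I'd instead approximate $f$ by smooth $f^{(k)}$, for which $e^{f^{(k)}} = (\text{outer invertible})$, hence $\dot A(e^{f^{(k)}})$ is invertible of index $0$, and then use continuity of $g\mapsto \dot A(g)$ plus stability of the Fredholm index under the operator-norm (or even strong/compact) perturbation guaranteed by the continuity statement just before the Lemma.

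The main obstacle is the middle exactness of the sequence, i.e.\ producing a $W^{1/2}$ logarithm for a degree-$0$ loop; the subtlety is precisely that $W^{1/2}$ functions need not be continuous (this is flagged in the remarks around Theorem \ref{SU(2)theorem1W}), so one cannot simply lift along a covering map. The fix is the double-integral comparison sketched above, together with the fact — which is the whole content of the degree being $0$ — that there is no topological obstruction to the global choice of branch; I would cite \cite{Br} for the relevant $VMO$/$W^{1/2}$ degree theory and the existence of liftings. Once that is secured, the index identity is soft: it reduces to the invariance of both sides under connected deformations and their agreement on $z^n$, using Theorem \ref{hartmann} to know $\dot A(\lambda)$ is Fredholm in the first place.
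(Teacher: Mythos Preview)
Your route to the key middle-exactness step (a degree-zero $\lambda \in W^{1/2}(S^1,S^1)$ admits a $W^{1/2}$ logarithm) is genuinely different from the paper's. The paper argues operator-theoretically: for $\lambda$ in the identity component one has $\dot A(\lambda)$ Fredholm of index zero (by continuity of the index along a path to $1$, using the continuity statement recorded just before the Lemma), hence invertible by Coburn's lemma; invertibility gives a scalar Birkhoff factorization $\lambda=\lambda_-\lambda_0\lambda_+$, and taking logarithms on the two disks yields $\lambda=\exp(-\chi^*+\chi_0+\chi)$. The punchline is the Szeg\H{o}--Widom identity
\[
\det(\dot A^*\dot A(\lambda))=\exp\bigl(-2\sum_{j\ge 1} j|\chi_j|^2\bigr),
\]
whose left side depends continuously on $\lambda$ in the $W^{1/2}$ topology; positivity of the determinant then forces $\sum j|\chi_j|^2<\infty$, i.e.\ $\chi\in W^{1/2}$. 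This stays entirely inside the Toeplitz/determinant machinery already in use and delivers exactly the factored form $\exp(-\chi^*+\chi_0+\chi)$ needed in Theorem~\ref{maintheoremW}.

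Your own sketch of the lifting step has a real gap. The comparison $|\theta(s)-\theta(t)|\asymp|\lambda(s)-\lambda(t)|$ holds only where $|\theta(s)-\theta(t)|<\pi$; for a generic degree-zero $\lambda$ the lift $\theta$ may oscillate by far more than $\pi$, and on that part of $S^1\times S^1$ the Besov integrand for $\lambda$ gives no control over that for $\theta$. The degree-zero hypothesis removes the \emph{global} monodromy obstruction to lifting, not this local failure of bi-Lipschitzness. The statement you need --- that every degree-zero $\lambda\in W^{1/2}(S^1,S^1)$ lifts to $W^{1/2}(S^1,\mathbb R)$ --- is a theorem of Bourgain--Brezis--Mironescu, and if \cite{Br} records it then quoting it as a black box repairs your argument; but the bi-Lipschitz heuristic does not. (Incidentally, $P_\pm$ \emph{is} bounded on $W^{1/2}$, since in Fourier coordinates it is truncation; the genuine obstruction to splitting $e^f=e^{f_+}e^{f_0}e^{f_-}$ is that $W^{1/2}$ is not an algebra.) Your treatment of the index identity via homotopy invariance and checking on $z^n$ is correct and in fact more explicit than the paper, which essentially defers this to \cite{Pe}.
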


There is a more general version of this involving $VMO$, which is
implicit on pages 100-101 of \cite{Pe}.

\begin{proof}Suppose that $f\in W^{1/2}(S^1,i\mathbb R)$. It is convenient to use
the equivalent Besov form of the $W^{1/2}$ norm,
$$\vert f \vert^2_{W^{1/2}}=\int \int \frac{\vert f(\theta_1)-f(\theta_2)\vert^2}{\vert
e^{i\theta_1}-e^{i\theta_2}\vert^2}d\theta_1 d\theta_2.$$ Because
$\vert e^{i\theta}-1\vert\le\vert\theta\vert$,
$$\int \int
\frac{\vert e^{f(\theta_1)}-e^{f(\theta_2)}\vert^2}{\vert
e^{i\theta_1}-e^{i\theta_2}\vert^2}d\theta_1 d\theta_2\le\vert
f\vert^2_{W^{1/2}}.$$ Thus $exp(f)$ is also $W^{1/2}$. This
inequality also shows that $exp$ is continuous at $0$. Since $exp$
is a homomorphism, this implies $exp$ is globally continuous.

Continuity implies that the image of $exp$ is contained in the
identity component. Conversely suppose that $\lambda \in
W^{1/2}(S^1,S^1)_0$. Then $\dot A(\lambda)$ is invertible. This
implies the existence of a Birkhoff factorization
$\lambda=\lambda_-\lambda_0\lambda_+$, where for example
$\lambda_+\in H^0(\Delta,0;\mathbb C^*,1)$ and has $L^2$ boundary
values. By taking logarithms on the disks, we can write
$\lambda=exp(-\chi^*+\chi_0+\chi)$. By a formula of Szego and
Widom (Theorem 7.1 of \cite{W}),
\begin{equation}\label{det3}det(\dot A^*\dot A(\lambda))=det(1-\dot C^*\dot C(\lambda))
=exp(-2\sum_{j=1}^{\infty}j\vert \chi_j\vert^2)\end{equation} The
determinant depends continuously on $\lambda$ in the $W^{1/2}$
topology. Therefore $\chi \in W^{1/2}$. This shows the sequence is
exact at $W^{1/2}(S^1,S^1)$.

A $W^{1/2}$ function cannot have jump discontinuities. This
implies that the kernel of $exp$ is $2 \pi i \mathbb Z$. Thus the
sequence in the statement of the Lemma is continuous and exact.
\end{proof}

\begin{theorem}\label{maintheoremW}For $g\in W^{1/2}(S^1,SU(2))$, the following are
equivalent:

(i) $g$ has a triangular factorization $g=lmau$.

(ii) $g$ has a factorization $g=k_1^*\lambda k_2$, where the
$k_i\in W^{1/2}(S^1,SU(2))$ satisfy the equivalent conditions of
Theorem \ref{SU(2)theorem1W}, and $\lambda\in W^{1/2}(S^1,T)_0$.

In both cases the factorization is unique.
\end{theorem}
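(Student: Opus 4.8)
The plan is to deduce Theorem \ref{maintheoremW} from the $C^s$ version (Theorem \ref{maintheorem}) together with the $W^{1/2}$ refinements already in place, namely Theorem \ref{SU(2)theorem1W} for the unitary factors $k_i$ and Lemma \ref{abelian} for the abelian factor $\lambda$. The structure mirrors the proof of Theorem \ref{maintheorem}, but every step must be checked to survive the drop in regularity from $C^s$ to $W^{1/2}$, where one no longer has a decomposing algebra or a Lie group structure.

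First I would treat $(ii)\implies(i)$. Given $g = k_1^*\lambda k_2$ with $k_i$ as in Theorem \ref{SU(2)theorem1W} and $\lambda \in W^{1/2}(S^1,T)_0$, use Lemma \ref{abelian} to write $\lambda = \exp(-\chi^*+\chi_0+\chi)$ with $\chi \in H^0(\Delta)$, $\chi(0)=0$, having $W^{1/2}$ boundary values. Then one writes down exactly the same formulas \eqref{lmatrix} and \eqref{umatrix} for $l(g)$ and $u(g)$. The only point requiring care is that the products $ye^{2\chi^*}$, $x^*e^{2\chi}$, etc., and their projections $P_\pm$ of them, lie in the right Hardy space with $L^2$ (indeed $W^{1/2}$) boundary values. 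Here I would invoke the fact that $x,y \in W^{1/2}$ by Theorem \ref{SU(2)theorem1W}, that $e^{\pm\chi}$ is bounded (since $\chi$ takes values in $i\mathbb{R}$ on $S^1$... no: $\chi$ is holomorphic, so $e^{\pm\chi}$ is only in $H^0(\Delta)$ with $W^{1/2}$ boundary values and need not be bounded). This is the first genuine obstacle: $W^{1/2}$ is not an algebra, so the products appearing in \eqref{lmatrix}–\eqref{umatrix} are not automatically in $W^{1/2}$ or even in $L^2$. I would resolve this by working directly with the Birkhoff/triangular factorization characterization: $A(g)$ factors as a product $A(k_1^*)\cdot(\text{invertible diagonal piece from }\lambda)\cdot A(k_2)$ up to triangular corrections, exactly as in Section \ref{triangularfactorization}, and invertibility of $A(g)$ and $A_1(g)$ follows from invertibility of the corresponding operators for $k_1$, $\lambda$, $k_2$ (Theorem \ref{SU(2)theorem1W} gives invertibility of $A(k_i)$ and $A_1(k_i)$; Lemma \ref{abelian} gives invertibility of $\dot A(\lambda)$). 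Then Theorem \ref{factorization}(b) produces a triangular factorization $g=lmau$, and the factors $l,u$ then have $L^2$ boundary values by definition. (That they are $W^{1/2}$ is part of the assertion; I would get this from the trace-class/Hilbert–Schmidt computations \eqref{append1}–\eqref{6.73} applied to $g$, since $C(M_g)$ is Hilbert–Schmidt iff $g\in W^{1/2}$, and the off-diagonal blocks of the factors inherit Hilbert–Schmidt control.)

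Next, $(i)\implies(ii)$: given a triangular factorization $g=lmau$ with $l,u$ having $W^{1/2}$ (hence $L^2$) boundary values, I follow the solution procedure of the proof of Theorem \ref{maintheorem} verbatim — set $\mathrm{Re}(\chi) = \log(a_1^{-1}) + \log((|l_{11}|^2+|l_{21}|^2)^{-1/2})$, define $a_1, a_2$ by the averaging formulas, recover $\mathrm{Im}(\chi)$ by $\mathrm{Im}(\chi)=i\,\mathrm{Re}(\chi)_- - i\,\mathrm{Re}(\chi)_+$, check the consistency condition \eqref{consistency} from $g^*g=1$, and then define the entries of $k_1,k_2$ by $a=a_1 e^\chi l_{11}^*$, $b=a_1 e^\chi l_{21}^*$, $c=a_2^{-1}e^\chi u_{21}$, $d=a_2^{-1}e^\chi u_{22}$, with the remaining entries of $l,u$ reconstructed by the Proposition following Theorem \ref{maintheorem}. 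The regularity bookkeeping is again the delicate part: one must argue that $\mathrm{Re}(\chi) \in W^{1/2}$ (from $l_{11},l_{21}\in W^{1/2}$ and the fact that $|l_{11}|^2+|l_{21}|^2$ is bounded below — because $l$ is invertible on $\Delta$ with unimodular determinant, so this combination is $\ge$ some positive constant a.e. on $S^1$, making $\log$ of it $W^{1/2}$), then that $\chi\in H^0(\Delta)$ with $W^{1/2}$ boundary values by boundedness of $P_\pm$ on $W^{1/2}(S^1,\mathbb{R})$ in the Besov norm, then $\lambda = \exp(-\chi^*+\chi_0+\chi) \in W^{1/2}(S^1,S^1)_0$ by Lemma \ref{abelian}, and finally that $k_1,k_2$ are genuinely $SU(2)$-valued and $W^{1/2}$. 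For the latter I would note $k_1$ has the special form in $(a_1)$ of Theorem \ref{SU(2)theorem1W} with $a,b$ holomorphic and non-simultaneously-vanishing (the latter from invertibility of $l^*$ on $\Delta$), and similarly for $k_2$ from invertibility of $u$; unimodularity of the reconstructed matrices follows by the same holomorphic-extension argument as in the proof of Theorem \ref{SU(2)theorem1W} (namely $\alpha\delta-\beta\gamma \in H^0(\Delta)$ has real boundary values and equals $1$ at $0$). Then Theorem \ref{SU(2)theorem1W} applies to give $k_i \in W^{1/2}(S^1,SU(2))$ with the required factorizations. The identity $g = k_1^*\lambda k_2$ itself follows from the Proposition after Theorem \ref{maintheorem}, exactly as before, since both sides share the six coordinates $m,a,l_{11},l_{21},u_{21},u_{22}$.

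Finally, uniqueness. In case (i), uniqueness of triangular factorization is Proposition \ref{decomposing}... rather the uniqueness Proposition in Section \ref{triangularfactorization}. In case (ii), uniqueness follows because the above reconstruction is forced at every step: $a_0, m_0$ are determined by $m(g)a(g)$; then $l_{11},l_{21},u_{21},u_{22}$ are determined by the triangular factorization of $g$ (which is unique); $\chi$ and $a_1,a_2$ are then forced by the displayed formulas; and $k_1,k_2,\lambda$ are then determined. I expect the main obstacle throughout to be the regularity arguments around the non-algebra $W^{1/2}$ — specifically verifying that $\log(|l_{11}|^2+|l_{21}|^2) \in W^{1/2}$ and that the various products and compositions stay in $W^{1/2}$. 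The key enabling facts are: boundedness below of $|l_{11}|^2+|l_{21}|^2$ (so $\log$ is a smooth function of a $W^{1/2}$ function with values in a compact set, and $W^{1/2}\cap L^\infty$ is closed under such compositions by the Besov estimate used in the proof of Lemma \ref{abelian}), and the Hilbert–Schmidt characterization of $W^{1/2}$ via the off-diagonal Toeplitz blocks, which lets one bypass pointwise product estimates entirely and argue at the level of operators on $\mathcal H$.
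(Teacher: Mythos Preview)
Your $(i)\Rightarrow(ii)$ argument has a genuine gap, and the paper uses a completely different mechanism to close it.

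You assume from the outset that $l,u$ have $W^{1/2}$ boundary values, but the hypothesis (i) says only that $g$ has a triangular factorization, which by the definition in Section~\ref{triangularfactorization} gives merely $L^2$ radial limits for $l$ and $u$. So $l_{11},l_{21}$ are a priori only in $L^2(S^1)$, not $W^{1/2}$, and certainly not $L^\infty$. Your claim that $|l_{11}|^2+|l_{21}|^2$ is bounded below ``because $l$ is invertible on $\Delta$ with unimodular determinant'' is also false: unimodularity of a $2\times 2$ matrix imposes no lower bound on the norm of its first column (take $l=\mathrm{diag}(\epsilon,\epsilon^{-1})$). Without both a lower bound and $L^\infty$ control on $l_{11},l_{21}$, the composition argument for $\log(|l_{11}|^2+|l_{21}|^2)\in W^{1/2}$ collapses, and with it the construction of $\chi$ and the $k_i$ in $W^{1/2}$.

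The paper bypasses all of this pointwise regularity bookkeeping. After formally solving for $k_i$ and $\chi$ as in Theorem~\ref{maintheorem}, it applies the determinant factorization \eqref{det2},
\[
\det(A^*A(g))=\prod_i(1+|\eta_i|^2)^{-i}\,\exp\!\bigl(-2\textstyle\sum_j j|\chi_j|^2\bigr)\,\prod_k(1+|\zeta_k|^2)^{-k},
\]
to the truncations $g^{(N)}$ obtained by setting $\eta_n,\chi_n,\zeta_n=0$ for $n>N$. Each factor on the right is $\le 1$ and monotone in $N$; since $g\in W^{1/2}$ forces $\det(A^*A(g))>0$, the product stays bounded away from zero, which pins $\eta,\chi,\zeta$ in $w^{1/2}$. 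Then Theorem~\ref{SU(2)theorem1W} (and Lemma~\ref{abelian}) deliver $k_i\in W^{1/2}$ and $\lambda\in W^{1/2}(S^1,T)_0$ directly, with no need to control $\log(|l_{11}|^2+|l_{21}|^2)$ or any products in the non-algebra $W^{1/2}$. This determinant/monotonicity step is the missing idea in your proposal.
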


\begin{proof} Given Lemma \ref{abelian}, the proof that (ii) implies (i) is
the same as in the proof of Theorem \ref{maintheorem}.

Now assume (i). We can again solve for $k_i$ and $\chi$, as in the
proof of Theorem \ref{maintheorem}. The determinant formulas
(\ref{det2}) can be applied to
$g^{(N)}=k_1^{(N)}exp(\chi^{(N)})k_2^{(N)}$, where the subscript
indicates that $\zeta_n$, $\chi_n$, $\eta_n$ are set equal to $0$,
for $n>N$. In (\ref{det2}), applied to $g^{(N)}$, all of the
individual factors in (\ref{det2}) are bounded above by $1$, and
are tending monotonically down. Since $g\in W^{1/2}$,
$det(A(g)A(g)^*)$ is positive, and $det(A(g^{(N)})A(g{(N)})^*$
will remain bounded away from zero. This implies that all of the
factors in (\ref{det2}), applied to $g{(N)}$, will be bounded away
from $0$. Thus $\zeta$, $\chi$ and $\eta$ are in $w^{1/2}$. By
Theorem \ref{SU(2)theorem1W}, $k_i \in W^{1/2}$. This implies
(ii).

\end{proof}

\begin{corollary}\label{surprise} The dense open set of $g\in W^{1/2}(S^1,SU(2))$
having triangular factorization is parameterized by $y$,
$\chi_0\in i\mathbb R$ $mod 2\pi i \mathbb Z$, $\chi$, and $x$,
where $y$, $\chi$ and $x$ are holomorphic functions in $\Delta$
with $W^{1/2}$ boundary values, and $x(0)=\chi(0)=0$.

\end{corollary}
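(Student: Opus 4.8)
The plan is to read off Corollary~\ref{surprise} as a direct repackaging of Theorem~\ref{maintheoremW} together with Theorem~\ref{SU(2)theorem1W} and Lemma~\ref{abelian}, so no new analysis is required; the work is purely bookkeeping of the parameter spaces. First I would recall from Theorem~\ref{maintheoremW} that the set of $g\in W^{1/2}(S^1,SU(2))$ admitting a triangular factorization is precisely the set of $g$ admitting a factorization $g=k_1^*\lambda k_2$ with $k_i\in W^{1/2}(S^1,SU(2))$ satisfying the conditions of Theorem~\ref{SU(2)theorem1W} and $\lambda\in W^{1/2}(S^1,T)_0$, and that this factorization is unique. Hence it suffices to parameterize each of the three factors.

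Next I would parameterize the $k_i$. By Theorem~\ref{SU(2)theorem1W}, condition $(c_1)$ shows $k_1$ is uniquely determined by its lower-triangular factor, i.e.\ by the holomorphic function $y=\sum_{j\ge0}y_jz^j$ with $W^{1/2}$ boundary values (no constraint at $z=0$); and condition $(c_2)$ shows $k_2$ is uniquely determined by $x^*=\sum_{j\ge1}\bar x_jz^{-j}$, equivalently by the holomorphic function $x=\sum_{j\ge1}x_jz^j$ with $W^{1/2}$ boundary values and $x(0)=0$. For the diagonal factor, Lemma~\ref{abelian} gives the exact sequence identifying $W^{1/2}(S^1,S^1)_0$ with $W^{1/2}(S^1,i\mathbb R)/2\pi i\mathbb Z$; writing $\lambda=\exp(-\chi^*+\chi_0+\chi)$ with $\chi_0\in i\mathbb R \bmod 2\pi i\mathbb Z$ and $\chi\in H^0(\Delta)$, $\chi(0)=0$, having $W^{1/2}$ boundary values (as in the proof of Theorem~\ref{maintheorem}), shows $\lambda$ is parameterized exactly by the pair $(\chi_0,\chi)$. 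Assembling these, $g$ is parameterized by $(y,\chi_0,\chi,x)$ as claimed.

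Finally I would address the topological assertion: that the locus of factorizable $g$ is dense and open in $W^{1/2}(S^1,SU(2))$. Openness follows because, by Theorem~\ref{factorization} and Remarks~\ref{perspective}, $g$ has a triangular factorization iff $A(g)$ and $A_1(g)$ are both invertible, and invertibility of operators is an open condition in the relevant operator topology; here one uses that $g\mapsto A(g)$ is continuous in the $W^{1/2}$ topology with the strong operator topology on the diagonal (noted before Lemma~\ref{abelian}) — more carefully, $A(g)A(g)^*=1-C(g)C(g)^*$ with $C(g)$ Hilbert--Schmidt and depending continuously in Hilbert--Schmidt norm on $g$, so invertibility of $A(g)$ (equivalently positivity of $A(g)A(g)^*$) is open, and likewise for $A_1(g)$ via $\sigma$. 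Density follows since $C^\infty(S^1,SU(2))$ is dense in $W^{1/2}(S^1,SU(2))$ and $L_{fin}SU(2)$ is dense in $C^\infty$, while a generic element of $L_{fin}SU(2)$ has both Toeplitz operators invertible; alternatively, density of the factorizable locus follows from the density of elements of the form $k_1^*\lambda k_2$ with all of $x,y,\chi$ finite. The main (and only real) obstacle is making the openness argument precise in the $W^{1/2}$ topology, since $W^{1/2}(S^1,SU(2))$ is not a Lie group and one must be careful that the map $g\mapsto(A(g),A_1(g))$ is continuous into the appropriate product of operator topologies; but this continuity statement is exactly the one recorded just before Lemma~\ref{abelian}, so the corollary follows.
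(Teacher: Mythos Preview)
Your proposal is correct and follows exactly the route the paper intends: the corollary is stated without proof immediately after Theorem~\ref{maintheoremW}, and your argument simply unpacks that theorem together with Theorem~\ref{SU(2)theorem1W} and Lemma~\ref{abelian} to exhibit the parameters $(y,\chi_0,\chi,x)$. One small slip: from unitarity of $M_g$ you get $A(g)^*A(g)=1-C(g)^*C(g)$ (or equivalently $A(g)A(g)^*=1-B(g)B(g)^*$), not $A(g)A(g)^*=1-C(g)C(g)^*$; this does not affect your openness argument, and your treatment of the dense--open claim is more than the paper itself supplies.
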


\begin{remark}This implies that an open neighborhood of $1\in W^{1/2}(S^1,SU(2))$ is
parameterized by a Hilbert space. This should be compared to the
finite dimensional situation, where a topological group locally
homeomorphic to $\mathbb R^n$ is automatically a $C^{\omega}$ Lie
group.
\end{remark}

\section{A Conjectural $L^2$ Generalization}\label{Summary}

Suppose that $\zeta \in l^2$. By Lemma \ref{weaklimit} there is a
unique limit $k_2\in Meas(S^1,GL(2,\mathbb C))$ for the product in
(\ref{append3}) below. When $A(k_2)$ is invertible, e.g. if $\zeta
\in w^{1/2}$ (by Theorem \ref{SU(2)theorem1W}), there are three
different expressions for $k_2$,

\begin{equation}\label{append3}\prod^{\leftarrow}a(\zeta_n)\left(\begin{matrix} 1&\zeta_nz^{-n}\\
-\bar{\zeta}_nz^n&1\end{matrix} \right)
=\left(\prod a(\zeta_n)\right)\left(\begin{matrix} \delta_2^{*}&-\gamma_2^{*}\\
\gamma_2&\delta_2\end{matrix} \right)=\left(\begin{matrix} 1&x^{*}\\
0&1\end{matrix} \right)\left(\begin{matrix} a_2&0\\
0&a_2^{-1}\end{matrix} \right)\left(\begin{matrix} \alpha_2&\beta_2\\
\gamma_2&\delta_2\end{matrix} \right),\end{equation} where
$a_2=\prod a(\zeta_j)^{-1}$, and $\gamma_2$ and $\delta_2$ are
determined by the formulas in Proposition \ref{gammadelta}. The
existence of the triangular factorization implies that $k_2$ has
values in $SU(2)$ on $S^1$.

Since the expression for $a_2$ is convergent for all $\zeta \in
l^2$, it is plausible that the triangular factorization in
(\ref{append3}) is valid for all $\zeta \in l^2$. A further leap
of faith suggests the following

\begin{conjecture}\label{SU(2)theorem1conjecture} Suppose that
$k_2\in Meas(S^1,SU(2))$. The following are equivalent:

($a_2$) $k_2$ is of the form
$$k_2(z)=\left(\begin{matrix} d^{*}&-c^{*}\\
c(z)&d(z)\end{matrix} \right),\quad z\in S^1,$$ where $c,d\in
H^0(\Delta)$, $c(0)=0$, $d(0)>0$, and $c$ and $d$ do not
simultaneously vanish at a point in $\Delta$.

($b_2$) $k_2$ has a factorization of the form
$$k_2(z)=\lim_{n\to\infty}a(\zeta_n)\left(\begin{matrix} 1&\zeta_nz^{-n}\\
-\bar{\zeta}_nz^n&1\end{matrix}
\right)..a(\zeta_1)\left(\begin{matrix} 1&
\zeta_1z^{-1}\\
-\bar{\zeta}_1z&1\end{matrix} \right),$$ where $\zeta\in l^2$, and
the limit is understood as in Lemma \ref{weaklimit}.

($c_2$) $k_2$ has triangular factorization of the form
$$\left(\begin{matrix} 1&\sum_{j=1}^nx^*_jz^{-j}\\
0&1\end{matrix} \right)\left(\begin{matrix} a_2&0\\
0&a_2^{-1}\end{matrix} \right)\left(\begin{matrix} \alpha_2 (z)&\beta_2 (z)\\
\gamma_2 (z)&\delta_2 (z)\end{matrix} \right).$$

Moreover this defines a bijective correspondence between $\zeta\in
l^2$ and $(x_n)\in l^2$.

\end{conjecture}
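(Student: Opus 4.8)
The plan is to reduce the conjecture to the two facts already in hand: Theorem~\ref{SU(2)theorem1W}, which is precisely these equivalences under the stronger hypothesis $\zeta\in w^{1/2}$ (together with the bijection $\zeta\leftrightarrow x$ there), and Lemma~\ref{weaklimit}, which for every $\zeta\in l^2$ produces a limit $k_2\in Meas(S^1,GL(2,\mathbb C))$ with $c,d\in H^0(\Delta)$, $|c|,|d|\le 1$, $c(0)=0$, $d(0)=\prod a(\zeta_j)>0$, and $\det k_2\le 1$ a.e. One first disposes of $(a_2)\Leftrightarrow(c_2)$, essentially as in Theorem~\ref{SU(2)theorem1smooth}: $(c_2)\Rightarrow(a_2)$ is immediate, since multiplying out gives $c=a_2^{-1}\gamma_2$, $d=a_2^{-1}\delta_2\in H^0(\Delta)$ with $c(0)=0$, $d(0)=a_2^{-1}>0$, and, the third factor being unimodular and hence invertible throughout $\Delta$, its bottom row — so the pair $(c,d)$ — has no common zero. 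For $(a_2)\Rightarrow(c_2)$ one reruns the operator $T\colon H^-\to H^-\oplus H^-$, $x^*\mapsto((cx^*)_-,(dx^*)_-)$; its two needed properties, namely that $T$ is the restriction of the (block-)unitary $M_{k_2^*}$ and that $(c,d)$ is common-zero-free, are exactly the hypotheses of $(a_2)$, and they deliver $x^*\in H^-$ (so $(x_n)\in l^2$) and hence $(c_2)$. The one point needing care: in the $C^s$ and $W^{1/2}$ settings the closed range of $T$ (equivalently the invertibility of $D(k_2)$) was guaranteed by $k_2\in QC$, via Fredholmness of index $0$ (Theorem~\ref{hartmann}); for merely measurable $k_2$ this is unavailable, and the closed range must instead be extracted directly from the block-unitarity of $M_{k_2}$ and the common-zero-free hypothesis.

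The substance is the equivalence with $(b_2)$, and its crux is the \emph{unitarity} of the limit $k_2$ attached to an arbitrary $\zeta\in l^2$ — which is why the statement is only a conjecture. By Lemma~\ref{weaklimit}, $\det k_2\le 1$ a.e., so $k_2\in Meas(S^1,SU(2))$ is equivalent to $\frac{1}{2\pi}\int_{S^1}(|c|^2+|d|^2)\,d\theta=1$; since the Taylor coefficients satisfy $c_n^{(N)}\to c_n$ and $d_n^{(N)}\to d_n$ while $\sum_n(|c_n^{(N)}|^2+|d_n^{(N)}|^2)=1$ for every $N$ (each $k_2^{(N)}$ being unitary), this is exactly the absence of escape of $l^2$-mass to high frequencies, i.e.
$$\sup_N\ \sum_{n>M}\left(|c_n^{(N)}|^2+|d_n^{(N)}|^2\right)\ \longrightarrow\ 0\qquad\text{as }M\to\infty.$$
Granting this bound, $c^{(N)}\to c$ and $d^{(N)}\to d$ in $L^2(S^1)$, hence $k_2^{(N)}\to k_2$ in $L^2(S^1,M_2(\mathbb C))$ and, after passing to a subsequence, pointwise a.e.; using that each $k_2^{(N)}(z)$ is unitary — so $|k_2^{(N)}f|=|f|$ pointwise, providing the fixed $L^2$ dominating function $|f|$ — dominated convergence gives $k_2^{(N)}f\to k_2f$ in $L^2$ for every $f$, whence $\|M_{k_2}f\|=\lim_N\|M_{k_2^{(N)}}f\|=\|f\|$. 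Thus $M_{k_2}$ is an isometry, $M_{k_2}^*M_{k_2}=M_{k_2^*k_2}=I$ forces $k_2^*k_2=I$ a.e., and since $\det k_2=|c|^2+|d|^2$ is real, positive a.e., and $\le 1$, it is $\equiv 1$; so $k_2\in Meas(S^1,SU(2))$. Then $A(k_2)$ is injective; granting that it is in fact invertible — which must again be obtained without Fredholm theory — a Birkhoff factorization exists (via $(\ref{factorformula})$), the common-zero-free condition follows from the Note after Proposition~\ref{gammadelta}, and $(a_2)$ holds, after which the first paragraph supplies $(c_2)$. The reverse passage $(c_2)\Rightarrow(b_2)$ would proceed by truncating $x^{(N)}=\sum_{n\le N}x_nz^n\in w^{1/2}$ and invoking Theorem~\ref{SU(2)theorem1W} to get $\zeta^{(N)}\in w^{1/2}$; but the two devices that made this work in the $W^{1/2}$ case — the identity $\det(1+\dot B(x^{(N)})\dot B(x^{(N)})^*)=\prod_{n\le N}(1+|\zeta_n^{(N)}|^2)^n$ with its $n$-weighted product, and the compact embedding $w^{1/2}\hookrightarrow l^2$ — have no $l^2$ analogue, so this direction is likewise stuck pending better control of the map $x\mapsto\zeta$.

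Thus everything hinges on the uniform tail bound above (equivalently, on a homeomorphic extension of the $x\leftrightarrow\zeta$ correspondence of the Appendix to $l^2$). The crude estimate used in Lemma~\ref{weaklimit}, $|\delta_{2,n}|\le\sum_{\mathcal P(n)}|\zeta|_{l^2}^{2\ell}$, gives such a bound only when $|\zeta|_{l^2}$ is small enough for the partition generating function to converge; for a general $\zeta\in l^2$ one must use the finer combinatorial structure of the coefficients of $c$ and $d$. The natural line of attack, paralleling the proof of Theorem~\ref{SU(2)theorem1W}, is to treat $\zeta\ge 0$ first, where Proposition~\ref{positivity} makes the coefficients of $c^{(N)}$ and $d^{(N)}$ nonnegative and monotone increasing in $N$, so that the tail bound becomes a monotone-convergence statement controlled by $\sum_n(|c_n|^2+|d_n|^2)\le 1$, and then to dominate an arbitrary $\zeta$ by $|\zeta|\in l^2$. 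Carrying this through — showing that the coefficients of $c(\zeta)$ and $d(\zeta)$ remain, uniformly in the approximating scheme, controlled in $l^2$ for every $\zeta\in l^2$ — is the main obstacle, and the point at which I do not see how to proceed; it is precisely the $L^2$ question alluded to in the introduction.
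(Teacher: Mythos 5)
The statement you are attempting is labeled a \emph{Conjecture} in the paper; the paper offers no proof of it, only the discussion in Section \ref{Summary} of where the known arguments break down, so there is nothing to compare your attempt against except that discussion --- and your attempt, as you candidly say, does not close the conjecture either. Your identification of the central difficulty on the $(b_2)$ side (unitarity of the weak limit, equivalently no escape of $l^2$-mass in the coefficients of $c^{(N)},d^{(N)}$, equivalently control of $\zeta\mapsto x$ on all of $l^2$) agrees with the paper's: the author states explicitly that $(b_2)\Rightarrow(a_2)\Rightarrow(c_2)$ hinges on whether $\zeta\in l^2$ implies $(x_n)\in l^2$, and that unitarity, left open in Lemma \ref{weaklimit}, would follow once a triangular factorization is in hand. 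Your proposed route to unitarity (prove the uniform tail bound first, deduce $L^2$ convergence, conclude $M_{k_2}$ is an isometry) is a reasonable alternative to the paper's intended route (establish $x$ in a usable class and read off unitarity from (\ref{xtok1})--(\ref{xtok3}) as in the proof of Theorem \ref{SU(2)theorem1W}), but both hit the same combinatorial wall, namely the absence of estimates on the coefficients of $c$, $d$, or $x$ in terms of $\vert\zeta\vert_{l^2}$ alone; your suggestion to exploit Proposition \ref{positivity} and monotonicity for $\zeta\ge 0$ is exactly the device the paper uses in the $w^{1/2}$ case, and it is not known to survive the loss of the $n$-weighted determinant identity.

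Where your proposal is genuinely too optimistic is the first paragraph: $(a_2)\Leftrightarrow(c_2)$ cannot be ``disposed of'' in the measurable setting. For $(a_2)\Rightarrow(c_2)$, the common-zero-free hypothesis yields only \emph{injectivity} of $A(k_2)$ and of the operator $T$; in the $C^s$ and $W^{1/2}$ cases closed range (indeed invertibility) came from Theorem \ref{hartmann}, i.e.\ from $k_2\in QC$ and Fredholmness of index $0$, and block-unitarity of $M_{k_2}$ gives only $\Vert Tf\Vert^2=\Vert f\Vert^2-\Vert B(k_2^*)(f,0)\Vert^2$, which is no lower bound at all. The paper records precisely this obstruction in Section \ref{Summary}: for measurable $k_2$ satisfying $(a_2)$, $A(k_2)$ is $1$--$1$, but its invertibility is equivalent (via Nehari--Fefferman) to $x$ having $BMO$ boundary values, and that is not a known consequence of $(a_2)$. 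Likewise $(c_2)\Rightarrow(a_2)$ requires making sense of (\ref{xtok1})--(\ref{xtok3}) for $x$ merely in $l^2$; the formula (\ref{xtok3}) for $a_2$ involves determinants that a priori require $\dot C(x^*)$ to be Hilbert--Schmidt, i.e.\ $x\in w^{1/2}$. So all three implications, not only the ones you flag, are open, and your attempt should be read as a largely accurate map of the obstructions rather than a partial proof.
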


In reference to this Conjecture, recall that the condition $(a_2)$
implies that $A(k_2)$ is $1-1$. This entails invertibility when
$k_2\in QC$ (see Theorem \ref{hartmann}), but not in general. When
$k_2$ is expressed as in $(c_2)$, the third paragraph of the proof
of Theorem \ref{SU(2)theorem1W}, together with results of Nehari
and Fefferman (pages 3-5 of \cite{Pe}), implies that $A(k_2)$ is
invertible precisely when $x$ has $BMO$ boundary values. Thus the
implications $(b_2) \implies (a_2) \implies (c_2)$ hinge on the
question of whether $\zeta \in l^2 \implies (x_n)\in l^2$, and
this is different from the question of when $A(k_2)$ is
invertible.

The implication $(c_2) \implies (a_2)$ hinges on the formulas
(\ref{xtok1})-(\ref{xtok3}) for $k_2$ in terms of $x$. The first
two formulas make sense for $x\in BMO$, as in the preceding
paragraph, but it is not clear that this is the natural domain for
$x$. Regarding the formula for $a_2$, which a priori depends on
$(x_n)\in w^{1/2}$, the second order term in the expansion at
$x=0$ is
$$ tr(C(x^*)C(x^*)^*)-tr(C(zx^*)C(zx^*)^*)=\sum \vert x_n\vert^2,$$
the $l^2$ norm. This is at least consistent with the Conjecture.

\section{Appendix. The Relation Between $x^*$ and $\zeta$}\label{appendix}

In this Appendix, we consider the relation between $x^*$ and
$(\zeta_j)$, in Theorem \ref{SU(2)theorem1}, at the level of
combinatorial formulas.

\subsection{$x^*$ as a function of $\zeta$}

\begin{proposition}\label{positivity} $x^{*}$ has the form
$$x^{*}=\sum_{j=1}^{\infty}x_1^{*}(\zeta_j,..)z^{-j},$$
where
$$x_1^{*}(\zeta_1,..)=\sum_{n=1}^{\infty}\zeta_n\left(\prod_{k=n+1}^{\infty}
(1+\vert\zeta_k\vert^2)\right)s_n(\zeta_n,\zeta_{n+1},\bar{\zeta}_{n+1},
..),$$ $s_1=1$ and for $n>1$,
$$s_n=\sum_{r=1}^{n-1}s_{n,r},\quad s_{n,r}=\sum c_{i,j}\zeta_{i_1}\bar{\zeta}_{j_1}\zeta_{
i_2}\bar{\zeta}_{j_2}..\zeta_{i_r}\bar{\zeta}_{j_r}$$ where the
sum is over multiindices satisfying the constraints
\begin{equation}\label{index}\begin{matrix} &&j_1&\le&..&\le j_r&\\
&&\lor&&&\lor\\
n&\le&i_1&\le&..&i_r&\end{matrix}
,\quad\sum_{l=1}^r(j_l-i_l)=n-1,\end{equation} and $c_{i,j}$ is a
positive integer.
\end{proposition}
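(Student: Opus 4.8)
The plan is to derive the formula for $x^*$ directly from the recursive structure of the factorization in $(b_2)$, using Proposition \ref{gammadelta} as the starting point. Recall from the proof of Theorem \ref{SU(2)theorem1} that in the finite case $x^* = -(c^*/d)_-$, where $c = a_2^{-1}\gamma_2$ and $d = a_2^{-1}\delta_2$ with $a_2 = \prod a(\zeta_j)^{-1}$; equivalently $x^* = -(\gamma_2^*/\delta_2)_-$. So the first step is to expand $\delta_2^{-1}$ as a geometric series $\delta_2^{-1} = \sum_{m\ge 0}(1-\delta_2)^m$ (valid since $\delta_2 = 1 + O(z)$ in the finite case, and termwise in the formal case), multiply by $-\gamma_2^*$, and extract the strictly-negative Fourier part. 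Combining the combinatorial sums for $\gamma_{2,n}$ and $\delta_{2,n}$ from Proposition \ref{gammadelta} should produce a sum over certain alternating words in $\zeta_i, \bar\zeta_j$ subject to a chain of inequalities; the main task is to reorganize this into the stated form.

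The second step is to identify the ``leading'' variable $\zeta_n$ and factor out the product $\prod_{k>n}(1+|\zeta_k|^2)$. Here the idea is that the coefficient $x_j^*$ (the coefficient of $z^{-j}$) only involves $\zeta_i$ with $i \ge j$ — this is already visible from the shift-covariance remark $\zeta_n(x_1,x_2,\dots) = \zeta_1(x_n,x_{n+1},\dots)$ in the main text, so it suffices to treat $x_1^*$. The term in $x_1^*$ linear in a single $\zeta_n$ (with everything else appearing in conjugate-balanced pairs) should come from the portion of the word $\gamma_2^*/\delta_2$ where $\zeta_n$ is the unique unpaired index; the factors $(1+|\zeta_k|^2)$ for $k > n$ arise by resumming the contributions in which indices larger than $n$ appear in $\bar\zeta_k\zeta_k$ pairs at the ``outer'' positions. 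What remains after factoring these out is, by definition, $s_n(\zeta_n,\zeta_{n+1},\bar\zeta_{n+1},\dots)$.

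The third step is to establish the recursive description of $s_n$: that $s_1 = 1$ (the word consisting of $\zeta_1$ alone), and for $n > 1$ that $s_n = \sum_{r\ge 1} s_{n,r}$ where $s_{n,r}$ is a sum of monomials $\zeta_{i_1}\bar\zeta_{j_1}\cdots\zeta_{i_r}\bar\zeta_{j_r}$ over the index set \eqref{index}, with positive integer coefficients $c_{i,j}$. The inequality pattern in \eqref{index} — $n \le i_1 \le \dots \le i_r$, $j_1 \le \dots \le j_r$, and $i_l \le j_l$ columnwise — should be read off from the nesting constraints $0 < i_1 < j_1 < \dots$ in Proposition \ref{gammadelta} after the resummation collapses the strict inequalities among the $i$'s (and among the $j$'s) into weak ones, absorbing the diagonal terms into the $(1+|\zeta_k|^2)$ factors; the balance condition $\sum(j_l - i_l) = n-1$ is exactly what survives of $\sum i_* - \sum j_* = \pm(\text{power of }z)$ once the unpaired $\zeta_n$ is removed. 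Positivity of $c_{i,j}$ is immediate because every contribution to $\gamma_{2,n}$ and to $(1-\delta_2)^m$ contributes monomials with coefficient $\pm 1$ in a way that, after collecting $\zeta\bar\zeta$ into $|\zeta|^2 \ge 0$, leaves only additions.

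The hard part will be the bookkeeping in the resummation: showing rigorously that the alternating-sign contributions from the geometric series for $\delta_2^{-1}$ reorganize — without cancellation beyond what is absorbed into the manifestly positive factors $(1+|\zeta_k|^2)$ — into a sum with positive integer coefficients over precisely the index set \eqref{index}. I expect this to require setting up a bijection between (a) lattice paths / words contributing to $-(\gamma_2^*/\delta_2)_-$ and (b) pairs consisting of an admissible multiindex in \eqref{index} together with a choice of which larger indices get promoted into the $(1+|\zeta_k|^2)$ factors; verifying that this bijection is weight-preserving and that the fibers have the claimed positive integer cardinalities $c_{i,j}$ is the real content. Everything else — convergence in the finite/formal setting, the reduction from $x_j^*$ to $x_1^*$ via the shift, the extraction of Fourier parts — is routine given the earlier results.
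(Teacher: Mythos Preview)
Your approach via $x^* = -(\gamma_2^*/\delta_2)_-$ and the geometric series for $\delta_2^{-1}$ is \emph{not} the paper's proof; the paper instead uses a recursion for $x_1^*(\zeta_1,\ldots,\zeta_{N+1})$ in terms of the $x_1^*(\zeta_i,\ldots,\zeta_N)$ (imported from \cite{P}), from which nonnegativity of the coefficients is visible immediately, and then verifies the claimed structural form by induction on $N$.

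More importantly, the paper actually carries out your approach \emph{after} proving the Proposition, precisely in order to get a closed formula for generic $c_{i,j}$, and it explicitly flags the gap in your plan. The expansion of $-\gamma_2^*\delta_2^{-1}$ produces a sum $\sum(-1)^{s+L}$ over ``cluster decompositions'' of each multiindex, and the signs do \emph{not} all disappear into the factors $(1+|\zeta_k|^2)$. The paper gives the concrete example of the index pattern
\[
\begin{matrix} &2&2\\ 1&1&3\end{matrix}
\]
which violates the required inequality $\mathbf i_l\le\mathbf j_l$ yet admits two cluster decompositions with opposite signs; the author states outright that he does not know how to see this cancellation directly, and instead invokes the already-proved Proposition~\ref{positivity} to conclude that such terms must cancel. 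So your sentence ``Positivity of $c_{i,j}$ is immediate because \ldots leaves only additions'' is exactly where the argument breaks: the bijection you anticipate in your final paragraph is the real content, it is not routine bookkeeping, and the paper's author was unable to produce it. Absent that bijection, your route does not establish positivity and hence does not prove the Proposition.
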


\begin{remark} The main features of the formula for $x^*_1$ are (i)
the appearance of the infinite products, which isolates the part
of the expression which has to be "renormalized" in probabilistic
applications, and (ii) the positivity of the coefficients. For
example (ii) implies that if $\zeta\ge 0$, then the coefficients
for $x(\zeta_1,..,\zeta_N,0,..)$ converge monotonically up to the
coefficients for $x(\zeta)$ as $N\to\infty$.
\end{remark}

\begin{proof} The fact that $x^*$ is completely determined by its residue $x^*_1$ is
(b) of Theorem 5 of \cite{P}. We will show that $x^*_1$ has the
form claimed in the Lemma (I stated this without proof in
\cite{P}).

Clearly $x_1^{*}(\zeta_1)=\zeta_1$. The proof hinges on the
following recursion (see Lemma 2 and (5.12) of \cite{P})
$$x^*_1(\zeta_1,..\zeta_{N+1})=$$
$$(1+\vert \zeta_{N+1}\vert^2)\{x_1(\zeta_1,..,\zeta_N)+\sum_{i+j=N+2}
x_1(\zeta_i,..,\zeta_N)x_1(\zeta_j,..,\zeta_N))\bar{\zeta}_{N+1}$$
$$+\sum_{i+j+k=2N+3}x_1(\zeta_i,..,\zeta_N)x_1(\zeta_j,..,\zeta_N
)x_1(\zeta_k,..,\zeta_N))\bar{\zeta}_{N+1}^2$$
$$+\sum_{i+j+k+l=3N+4}x_1(\zeta_i,..,\zeta_N)x_1(\zeta_j,..,\zeta_
N)x_1(\zeta_k,..,\zeta_N)x_1(\zeta_l,..\zeta_N)\bar{\zeta}_{N+1}^
3+..\}$$ From this recursion one can immediately see that
coefficients will be nonnegative.

We assume that
$$x_1^{*}(\zeta_1,..,\zeta_N)=\sum_{n=1}^N\zeta_n\prod_{k=n+1}^N(
1+\vert\zeta_k\vert^2)s_n(\zeta_n,..,\zeta_N),$$ where $s_1=1$ and
for $n>1$
$$s_n(\zeta_n,..,\zeta_N)=\sum c_{i,j}\zeta_{i_1}\bar{
\zeta}_{j_1}\zeta_{i_2}\bar{\zeta}_{j_2}..\zeta_{i_r}\bar{\zeta}_{
j_r},$$ the sum is over multiindices as in (\ref{index}), with
$j_r\le N$, and $c_{i,j}$ is a positive integer (for $N>1$, $s_
N(\zeta_N)=0$).

This implies
$$x_1^{*}(\zeta_I,..,\zeta_N)=\sum_{n=1}^{N-(I-1)}\zeta_{n+(I-1)}
\prod_{k=n+1}^{N-(I-1)}(1+\vert\zeta_{k+(I-1)}\vert^2)s_n(\zeta_{
n+(I-1)},..)$$
$$=\sum_{m=I}^N\zeta_m\prod_{k=m+1}^N(1+\vert\zeta_k\vert^2)s_{m-
(I-1)}(\zeta_m,..,\zeta_N)$$ where
$$s_{m-(I-1)}(\zeta_m,..,\zeta_N)=\sum c_{i-(I-1)\vec {1},
j-(I-1)\vec {1}}\zeta_{i_1}\bar{\zeta}_{j_1}..\zeta_{i_L}\bar{
\zeta}_{j_L},$$ the sum is over multiindices satisfying
$$\begin{matrix} &&j_1&\le&..&\le j_L&\le&N\\
&&\lor&&&\lor\\
m&\le&i_1&\le&..&i_L&\end{matrix}
,\quad\sum_{l=1}^L(j_l-i_l)=m-I,$$ and in the notation for the
coefficient, $i-(I-1)\vec {1}$ means that we subtract $I-1$ from
each of the components of $i$.

We now plug this into the recursion relation, and rewrite the
expression so that it has the same form as the sum involving $N$
variables:
\begin{equation}\label{sum1}x_1(\zeta_1,..\zeta_{N+1})=(1+\vert\zeta_{N+1}\vert^2)\sum_{s\ge
0}\{\sum_{\sum_{l=1}^{s+1}
I_l=s(N+1)+1}\prod_{I_l}x_1(\zeta_{I_l},..,\zeta_N)\}\bar{\zeta}_{
N+1}^s\end{equation}
$$=(1+\vert\zeta_{N+1}\vert^2)\sum_{s\ge 0}\sum_{\sum I_l=s(N+1)+
1}\prod_{I_l}(\sum_{m_l=I_l}^N\zeta_{m_l}\prod_{k=m_l+1}^N(1+\vert
\zeta_k\vert^2)s_{m_l-(I_l-1)}(\zeta_{m_l},..,\zeta_N))\bar{\zeta}_{
N+1}^s$$
$$=(1+\vert\zeta_{N+1}\vert^2)\sum_{s\ge 0}\sum_{\sum I_l=s(N+1)+
1}\sum_{m_1=I_1}^N..\sum_{m_{s+1}=I_{s+1}}^N$$
$$\prod_{I_l}[\zeta_{m_l}\prod_{k=m_l+1}^N(1+\vert\zeta_k\vert^2)
\sum c_{\vec i_l-(I_l-1)\vec {1},\vec {j}_l-(I_l-1)\vec
{1}}\zeta_{i_{l,1}}\bar{\zeta}_{j_{l,1}}..\zeta_{i_{l,L_l}}\bar{\zeta}_{j_{l
,L_l}}]\bar{\zeta}_{N+1}^s,$$
$$=(1+\vert\zeta_{N+1}\vert^2)\sum_{s\ge 0}\sum_{\sum I_l=s(N+1)+
1}\sum_{m_1=I_1}^N..\sum_{m_{s+1}=I_{s+1}}^N$$
$$\sum_1..\sum_{s+1}\prod_{I_l}[\zeta_{m_l}\prod_{k=m_l+1}^N(1+\vert
\zeta_k\vert^2)c_{\vec {i}_l-(I_l-1)\vec {1},\vec
{j}_l-(I_l-1)\vec {1}}\zeta_{
i_{l,1}}\bar{\zeta}_{j_{l,1}}..\zeta_{i_{l,L_l}}\bar{\zeta}_{j_{l
,L_l}}]\bar{\zeta}_{N+1}^s,$$ where for each $1\le l\le s+1$, the
sum $\sum_l$ is over multiindices satisfying
$$\begin{matrix} &&j_{l,1}&\le&..&\le j_{l,L_l}&\le&N\\
&&\lor&&&\lor\\
m_l&\le&i_{l,1}&\le&..&i_{l,L_l}&\end{matrix} ,\quad\sum_{\tau
=1}^{ L_l}(j_{l,\tau}-i_{l,\tau})=m_l-I_l,$$ Consider a term in
this sum of the form
\begin{equation}\label{term}\prod_{I_l}[\zeta_{m_l}\prod_{k=m_l+1}^N(1+\vert\zeta_k\vert^2)
\zeta_{i_{l,1}}\bar{\zeta}_{j_{l,1}}..\zeta_{i_{l,L_l}}\bar{\zeta}_{
j_{l,L_l}}]\bar{\zeta}_{N+1}^s,\end{equation} where $m_l\le
i_{l,1}$ for each $l$. Let $n=min\{m_l:1\le l\le s+1\}$, and
factor out
$$\zeta_n\prod_{k=n+1}^{N}(1+\vert\zeta_k\vert^2)$$ in
(\ref{term}). What remains can be expressed as a positive integral
combination of monomials
$$\zeta_{i_1}\bar{\zeta}_{j_1}\zeta_{i_2}\bar{\zeta}_{j_2}..\zeta_{
i_r}\bar{\zeta}_{j_L},$$ where
$$\begin{matrix} &&j_1&\le&..&\le j_L&\le&N+1\\
&&\lor&&&\lor\\
n&\le&i_1&\le&..&i_L&\end{matrix}
,\quad\sum_{l=1}^L(j_l-i_l)=n-1.$$ Multiplicities arise when the
factors with $m_l\ne m$,
$$\prod_{k=m_l+1}^N(1+\vert\zeta_k\vert^2)$$ are expanded.
Thus the entire sum can be written as
$$\sum_{n=1}^N\zeta_n\prod_{k=n+1}^{N+1}(1+\vert\zeta_k\vert^2)s_n(
\zeta_n,..,\zeta_{N+1})$$ with
$$s_n(\zeta_n,..,\zeta_{N+1})=\sum c_{\vec {i},\vec {j}}^{(N+1)}\zeta_{
i_1}\bar{\zeta}_{j_1}\zeta_{i_2}\bar{\zeta}_{j_2}..\zeta_{i_r}\bar{
\zeta}_{j_L},$$ the sum is over multiindices satisfying
$$\begin{matrix} &&j_1&\le&..&\le j_L&\le&N+1\\
&&\lor&&&\lor\\
n&\le&i_1&\le&..&i_L&\end{matrix}
,\quad\sum_{l=1}^L(j_l-i_l)=n-1,$$ and $c_{\vec {i},\vec
{j}}^{(N+1)}$  can be computed, in principle, recursively. If
$j_L\le N$, then $c_{i,j}^{(N+1)}=c_{i,j}^{(N)}$. Otherwise the
index $(i,j)$ has the form
$$\begin{array}{ccccccccccc}&&j_1&\le&..&j_r&<&N+1&..&.&N+1\\
&&\lor&&&\lor&&\lor&&&\lor\\i_0&\le&i_1&\le&..&i_r&\le&i_{r+1}&..&\le&i_L\end{array}$$
where $r+s=L$. The corresponding terms will all originate from the
term involving the index $s$ in the last expression for
(\ref{sum1}). There are many ways that terms could arise, and at
best we obtain a formula for $c^{(N+1)}$ in terms of coefficients
$c^{(N)}$. So at this point we can only see that these
coefficients are positive.
\end{proof}

Our aim now is to consider another approach which yields a closed
formula for "generic" $c_{i,j}$. This formula a priori involves
signs, and we will make use of Proposition \ref{positivity} to
identify cancellations.

The matrix
$$\left(\begin{matrix} 1&\sum_{j=1}^nx^{*}_jz^{-j}\\
0&1\end{matrix} \right)\left(\begin{matrix} a_2&0\\
0&a_2^{-1}\end{matrix} \right)\left(\begin{matrix}\alpha (z)&\beta (z)\\
\gamma (z)&\delta (z)\end{matrix} \right)$$
$$=\left(\begin{matrix} a_2\alpha+x^{*}a_2^{-1}\gamma&a_2\beta +x^{
*}a_2^{-1}\delta\\
a_2^{-1}\gamma&a_2^{-1}\delta\end{matrix} \right)$$ is special
unitary, for all $z\in S^1$. Therefore $-\gamma^{*}=a_2^2\beta
+x^{*}\delta,$ and initially assuming $\delta$ is nonvanishing,
this implies $x^{*}=P_{-}(-\gamma^{*}\delta^{-1}).$ In particular
$$x^{*}_1=Residue(-\gamma^{*}\delta^{-1})$$
$$=-\gamma_1^{*}+(\gamma^{*}_2\delta_1+\gamma_3^{*}\delta_2+..)-(
\gamma_3^{*}(\delta^2)_2+..)$$
$$=-\sum_{m\ge 1}\gamma_m^{*}\sum (-1)^s\delta_{n_1}..\delta_{n_s}$$
where the second sum is over tuples $n_1,..,n_s\ge 1$ satisfying
$\sum n_l=m-1$. Using the formulas for $\gamma^{*}$ and $ \delta$
in Proposition \ref{gammadelta},
$$x_1^{*}=\sum (-1)^{s+1}((-1)^{r_m+1}\sum\zeta_{i_{m,1}}\bar{\zeta}_{
j_{m,1}}...\zeta_{i_{m,r_m}}\bar{\zeta}_{j_{m,r_m}}\zeta_{i_{m,r_
m+1}})$$
$$(-1)^{r_{n_1}}(\sum\zeta_{i_{n_1,1}}\bar{\zeta}_{j_{n_1,1}}...\zeta_{
i_{n_1,r_{n_1}}}\bar{\zeta}_{j_{n_1,r_{n_1}}})$$
$$...(-1)^{r_{n_s}}(\sum\zeta_{i_{n_s,1}}\bar{\zeta}_{j_{n_s,1}}.
..\zeta_{i_{n_s,r_{n_s}}}\bar{\zeta}_{j_{n_s,r_{n_s}}})$$ where
the indexing can be described in the following way: the first sum
is over $m,n_1,.,n_s\ge 1$ satisfying $\sum_ln_l=m-1$, the first
internal sum, or cluster indexed by $m$, is over indices
satisfying
$$0<i_{m,1}<j_{m,1}<..<j_{m,r}<i_{m,r_m+1},\quad\sum_{k=1}^{r_m+1}
i_{m,k}-\sum_{k=1}^{r_m}j_{m,k}=m$$ and the cluster indexed by
$n_l$ is over indices satisfying
$$0<i_{n_l,1}<j_{n_l,1}<..<j_{n_l,r_{n_l}},\quad\sum_{k=1}^{r_{n_
l}}(j_{n_l,k}-i_{n_l,k})=n_l.$$

We now write this as a single sum and consider one of the terms.
We can put the $i$-indices (which are organized in clusters)
$$i_{m,1},..,i_{m,r_m+1};i_{n_1,1},..,i_{n_1,r_{n_1}};..;i_{n_s,1}
,..,i_{n_s,r_{n_s}}$$ and the $j$-indices
$$j_{m,1},..,j_{m,r_m};j_{n_1,1},..,j_{n_1,r_{n_1}};..;j_{n_s,1},
..,j_{n_s,r_{n_s}}$$ in nondecreasing order, which we write as
$$\mathbf i_0\le \mathbf i_1\le ..\le \mathbf i_L\quad and\quad \mathbf j_
1\le ...\le \mathbf j_L,$$ respectively.

\begin{lemma}\label{strict} In addition to being nondecreasing,
the indices $\mathbf i_l$, $\mathbf j_l$ satisfy $\mathbf
i_{l-1}<\mathbf j_l,$ for $l=1,..,L$.
\end{lemma}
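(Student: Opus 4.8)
The plan is to strip the statement of its combinatorial packaging and reduce it to a one‑line counting inequality that can be checked separately on each cluster directly from the displayed constraints. Write $A$ for the multiset of all $i$‑indices occurring in the chosen term (there are $L+1$ of them, namely $i_{m,1},\dots,i_{m,r_m+1}$ together with all the $i_{n_l,\ast}$) and $B$ for the multiset of all $j$‑indices (there are $L$ of them), so that $\mathbf i_0\le\dots\le\mathbf i_L$ and $\mathbf j_1\le\dots\le\mathbf j_L$ are the sorted enumerations of $A$ and $B$. I would first record the elementary order‑statistics fact that for integer multisets of these sizes one has $\mathbf i_{l-1}<\mathbf j_l$ for every $l=1,\dots,L$ if and only if
\[
\#\{b\in B:\ b\le v\}\ \le\ \#\{a\in A:\ a<v\}\qquad\text{for every integer }v .
\]
The ``only if'' direction takes $v=\mathbf j_l$ at an offending $l$; the ``if'' direction, from a failure at some $v$, sets $l=\#\{a\in A:a<v\}+1$ and checks $\mathbf i_{l-1}\ge v\ge\mathbf j_l$. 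The value of this reduction is that the right‑hand condition is \emph{additive over the clusters}, which the original statement about sorted sequences is not.

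Next I would prove the displayed inequality by summing its analogue over the single cluster indexed by $m$ and the clusters indexed by $n_1,\dots,n_s$. Inside the $m$‑cluster, the constraint $0<i_{m,1}<j_{m,1}<\dots<j_{m,r_m}<i_{m,r_m+1}$ exhibits the $i$'s and $j$'s strictly interleaved with an $i$ first, so in particular $i_{m,k}<j_{m,k}$ for $k=1,\dots,r_m$; thus $k\mapsto i_{m,k}$ is an injection of the $j$‑indices of this cluster into its $i$‑indices that strictly decreases each value, leaving only $i_{m,r_m+1}$ unpaired. Inside a cluster indexed by $n_l$, the constraint $0<i_{n_l,1}<j_{n_l,1}<\dots<j_{n_l,r_{n_l}}$ likewise gives $i_{n_l,k}<j_{n_l,k}$ for all $k$, an injection with no leftover. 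Hence for each cluster $c$, $\#\{j\text{-indices of }c\le v\}\le\#\{i\text{-indices of }c<v\}$; summing over all clusters yields exactly the inequality needed, and the first step then delivers the lemma. (Equivalently, one can keep the injections: patching the per‑cluster maps produces a single injection $\phi\colon B\hookrightarrow A$ with $\phi(b)<b$, and then $\mathbf i_{l-1}$, being $\le$ the $l$‑th smallest element of the size‑$L$ sub‑multiset $\phi(B)\subseteq A$, is strictly below $\mathbf j_l$ because $\phi(\mathbf j_1),\dots,\phi(\mathbf j_l)$ are $l$ members of $\phi(B)$ all $<\mathbf j_l$.)

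I do not expect a genuine obstacle here; the only points needing care are bookkeeping the off‑by‑one between $\#A=L+1$ and $\#B=L$ — the surplus being precisely the unpaired index $i_{m,r_m+1}$ from the $m$‑cluster — and getting the strict‑versus‑nonstrict inequalities right in the order‑statistics reduction, since across different clusters an $i$‑index and a $j$‑index may take the same value (they are still distinct slots of the multisets) while within a single cluster all indices are distinct. Everything else is immediate from the two alternating chains displayed just before the statement of the lemma.
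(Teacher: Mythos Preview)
Your argument is correct and essentially the same as the paper's: both exploit the per-cluster interleaving $i_{*,k}<j_{*,k}$ to build a bijection between the $L$ $j$-indices and $L$ of the $L{+}1$ $i$-indices (with $i_{m,r_m+1}$ the unpaired one), and then deduce $\mathbf i_{l-1}<\mathbf j_l$ by an order-statistics count---the paper runs a short descending induction, while you phrase it as the counting characterization or, equivalently, via the injection $\phi$. One cosmetic slip: the hints you give for the ``if'' and ``only if'' directions of the order-statistics equivalence are interchanged (taking $v=\mathbf j_l$ at an offending $l$ proves the \emph{if} direction by contraposition), though only that direction is needed for the lemma and both sketches are in themselves correct.
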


\begin{proof} With the possible
exception of $i_{m,r+1}$, for any given $\mathbf i$-index, it is
possible to find a $\mathbf j$-index with greater value, so that
the map from these $\mathbf i$-indices to $\mathbf j$-indices is $
1-1$ (simply map $i_{n,l}$ to $j_{n,l}$). One of $\mathbf i_{L-1}$
or $\mathbf i_ L$ must be strictly less than $\mathbf j_L$, hence
$\mathbf i_{L-1}$ must be strictly less than $\mathbf j_L$.
Similarly one of $\mathbf i_{L-2}$ or $\mathbf i_{L -1}$ or
$\mathbf i_L$ must be strictly less than $\mathbf j_{L-1}$, hence
$\mathbf i_{L-2}$ must be strictly less than $\mathbf j_{L-1}$.
Continuing in this way, this implies the strict inequalities in
the Lemma.
\end{proof}

We claim that we can additionally assume that
\begin{equation}\label{inequal}\mathbf i_l \le \mathbf j_l,\quad
l=1,..,L.\end{equation} This is not implied by cluster
decomposition considerations. For example the index set
$$\begin{matrix} &2&2\\
1&1&3\end{matrix} $$ violates (\ref{inequal}), yet there are two
cluster decompositions: $1<2<3;1<2$ (with
$(-1)^{s+L}=(-1)^{1+2}=-1$) and $3;1<2;1<2$ (with
$(-1)^{s+L}=(-1)^{2+2}=1$). This claim is justified by Proposition
\ref{positivity}, which implies that terms corresponding to
indices not satisfying (\ref{inequal}) will cancel out (It would
clearly be desirable to see this cancellation directly, but I do
not know how to do this). This implies the following formula.

\begin{lemma}\label{formula1}$x_1^{*}=\sum \mathbf c_{\mathbf i,\mathbf j}\zeta_{\mathbf i_0}\zeta_{\mathbf i_
1}\bar{\zeta}_{\mathbf j_1}..\zeta_{\mathbf
i_L}\bar{\zeta}_{\mathbf j_ L},$ where the indices satisfy the
constraints \begin{equation}\label{index2}0<\mathbf i_0\le \mathbf
i_1\le ..\le \mathbf i_L,\quad \mathbf j_1\le ...\le \mathbf
j_L,\quad \mathbf i_1\le \mathbf j_1,..,\mathbf i_L\le \mathbf j_
L,\end{equation}
$$\mathbf i_0<\mathbf j_1,..,\mathbf
i_{L-1}<\mathbf j_L,\quad\sum \mathbf i-\sum \mathbf j=1,$$ and
\begin{equation}\label{csum}\mathbf c_{\mathbf i,\mathbf j}=\sum (-1)^{s+L},\end{equation}
where the sum is over all possible ways in which the indices can
be partitioned as
$$i_{m,1},..,i_{m,r_{m+1}};i_{n_1,1},..,i_{n_1,r_{n_1}};..;i_{n_s
,1},..,i_{n_s,r_{n_s}}$$
$$j_{m,1},..,j_{m,r_m};j_{n_1,1},..,j_{n_1,r_{n_1}};..;j_{n_s,1},
..,j_{n_s,r_{n_s}}$$ so that the strict interlacing inequalities
$$0<i_{m,1}<j_{m,1}<..<j_{m,r}<i_{m,r+1},\quad\sum_ki_{m,k}-\sum_
kj_{m,k}=m$$
 and
$$0<i_{n_l,1}<j_{n_l,1}<..<j_{n_l,r},\quad\sum_k(j_{n_l,k}-i_{n_l
,k})=n_l$$ hold for $l=1,..,s$.
\end{lemma}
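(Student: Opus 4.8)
The plan is to assemble the expansion carried out in the paragraphs preceding the statement, supply the one family of inequalities $\mathbf i_l\le\mathbf j_l$ that the cluster structure does not by itself force, and then read off the coefficient. The only non-routine ingredient is that this last family may be imposed at no cost, and for it I would invoke Proposition \ref{positivity}.

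First I would work entirely with formal power series in the variables $\zeta_j,\bar\zeta_j$. Since $\delta_2=1+\sum_{n\ge1}\delta_{2,n}z^n$ with each $\delta_{2,n}$ a polynomial in the $\zeta$'s having no constant term, $\delta_2^{-1}=\sum_{s\ge0}(-1)^s(\delta_2-1)^s$ is a well-defined formal Laurent series in $z$, so the identity $x_1^{*}=\mathrm{Res}(-\gamma_2^{*}\delta_2^{-1})$, derived above from the special unitary relation $-\gamma_2^{*}=a_2^2\beta_2+x^{*}\delta_2$, makes sense with no hypothesis on the zeros of $\delta_2$. Substituting the explicit series for $\gamma_2$ (hence $\gamma_2^{*}$) and $\delta_2$ from Proposition \ref{gammadelta} and extracting the coefficient of $z^{-1}$ yields exactly the multi-cluster sum written out above: one ``$m$-cluster'' from $\gamma_2^{*}$ with one more unbarred than barred index and strict interlacing $0<i_{m,1}<j_{m,1}<\dots<j_{m,r_m}<i_{m,r_m+1}$, $\sum i-\sum j=m$, together with $s$ clusters from the factors of $\delta_2$, each with equal numbers of unbarred and barred indices, strict interlacing $0<i_{n_l,1}<j_{n_l,1}<\dots<j_{n_l,r_{n_l}}$, $\sum j-\sum i=n_l$, subject to $\sum_l n_l=m-1$. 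Tracking the four sources of signs --- the leading minus, $(-1)^{r_m+1}$ from the $\gamma_2^{*}$-cluster, $(-1)^s$ from the geometric series, and $(-1)^{r_{n_l}}$ from each $\delta_2$-cluster --- collapses them to the single global factor $(-1)^{s+L}$, where $L=r_m+\sum_l r_{n_l}$ is the total number of barred indices; adding the cluster balances gives $\sum\mathbf i-\sum\mathbf j=m-\sum_l n_l=1$; and after sorting the unbarred indices into $\mathbf i_0\le\dots\le\mathbf i_L$ and the barred ones into $\mathbf j_1\le\dots\le\mathbf j_L$, Lemma \ref{strict} supplies $\mathbf i_{l-1}<\mathbf j_l$. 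This gives the asserted formula, except that so far the sum ranges over index data satisfying (\ref{index2}) \emph{without} the weak inequalities $\mathbf i_l\le\mathbf j_l$, with $\mathbf c_{\mathbf i,\mathbf j}$ given by (\ref{csum}).

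The substance is then the vanishing of $\sum(-1)^{s+L}$ for every such configuration that violates some $\mathbf i_l\le\mathbf j_l$. Here I would fully expand the infinite products $\prod_{k>n}(1+\vert\zeta_k\vert^2)$ in the formula of Proposition \ref{positivity}. This exhibits $x_1^{*}$ as a series in the $\zeta$'s and $\bar\zeta$'s with all coefficients nonnegative, and every monomial occurring there has the shape $\zeta_n\cdot(\zeta_{i_1}\bar\zeta_{j_1}\cdots\zeta_{i_r}\bar\zeta_{j_r})\cdot(\vert\zeta_{k_1}\vert^2\cdots\vert\zeta_{k_t}\vert^2)$ with $n\le i_1$, each $k_a>n$, and $i_l\le j_l$ by the constraints (\ref{index}). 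So $n$ is a smallest unbarred index; deleting that factor leaves $r+t$ unbarred indices matched bijectively to the $r+t$ barred indices (via $i_l\mapsto j_l$ and $k_a\mapsto k_a$) with each image no smaller than its preimage. A one-line fact --- if a finite multiset is matched to another of the same size with each partner no smaller than its preimage, then after sorting both increasingly the $l$th entry of the former is $\le$ the $l$th entry of the latter, since otherwise the largest $L-l+1$ elements of the former would have to match into the largest $L-l$ of the latter --- then shows every monomial of the Proposition \ref{positivity} expansion satisfies $\mathbf i_l\le\mathbf j_l$ for $l\ge1$. Since the signed cluster expansion and the nonnegative expansion are two expansions of the same function, they agree monomial by monomial; hence any monomial whose sorted index data violates some $\mathbf i_l\le\mathbf j_l$ has coefficient $0$ in the cluster expansion --- which is exactly the claimed vanishing --- while for configurations obeying (\ref{index2}) the coefficient is unchanged and equals $\sum(-1)^{s+L}$. (This argument also re-derives the positivity of $\mathbf c_{\mathbf i,\mathbf j}$, matching Proposition \ref{positivity}.)

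The main obstacle is precisely this cancellation: I do not see a direct combinatorial proof that $\sum(-1)^{s+L}=0$ over the cluster decompositions of a configuration violating $\mathbf i_l\le\mathbf j_l$, so the argument must route through the already-established Proposition \ref{positivity} rather than cancel terms in place. The only other delicate point is the one dealt with at the start: the step $x_1^{*}=\mathrm{Res}(-\gamma_2^{*}\delta_2^{-1})$ is to be read as an identity of formal power series in the $\zeta$'s, for which $\delta_2$ is automatically a unit; this is harmless since the lemma concerns exactly those formal series.
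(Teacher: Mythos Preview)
Your proposal is correct and follows essentially the same route as the paper: expand $x_1^{*}=\mathrm{Res}(-\gamma_2^{*}\delta_2^{-1})$ via Proposition~\ref{gammadelta}, collapse the signs to $(-1)^{s+L}$, invoke Lemma~\ref{strict} for the strict inequalities $\mathbf i_{l-1}<\mathbf j_l$, and then appeal to Proposition~\ref{positivity} to justify restricting to indices with $\mathbf i_l\le\mathbf j_l$, since the paper likewise admits it does not know a direct combinatorial cancellation. Your formal-power-series remark and the explicit matching/sorting argument (that the monomials in the expanded form of Proposition~\ref{positivity} automatically satisfy $\mathbf i_l\le\mathbf j_l$) are useful clarifications the paper leaves implicit, but they do not change the strategy.
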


To compare with the formula in Proposition \ref{positivity}, we
first sum over $n=\mathbf i_0$, and write
\begin{equation}\label{formula2}x_1^{*}=\sum_{n=1}^{\infty}\zeta_n\sum \mathbf
c_{(n,\mathbf i),\mathbf j}\zeta_{ \mathbf
i_1}\bar{\zeta}_{\mathbf j_1}..\zeta_{\mathbf i_L}\bar{\zeta}_{
\mathbf j_L}\end{equation} where $(n,\mathbf i)$ now stands for
$n\le \mathbf i_1\le ..\le \mathbf i_L$. This implies

\begin{equation}\label{formula3}\sum \mathbf
c_{(n,\mathbf i),\mathbf j}\zeta_{ \mathbf
i_1}\bar{\zeta}_{\mathbf j_1}..\zeta_{\mathbf i_L}\bar{\zeta}_{
\mathbf j_L} =\left(\prod_{k=n+1}^{\infty}
(1+\vert\zeta_k\vert^2)\right) \sum
c_{i,j}\zeta_{i_1}\bar{\zeta}_{j_1}\zeta_{
i_2}\bar{\zeta}_{j_2}..\zeta_{i_r}\bar{\zeta}_{j_r}
\end{equation}
where the indexing set for the latter sum satisfies the
constraints in Proposition \ref{positivity}. To directly compare
the coefficients we expand the product of factors $(1+\vert
\zeta_j \vert^2)$ and distribute the pairs $\zeta_j$ and
$\bar{\zeta_j}$. This implies the following

\begin{lemma}\label{formula5} Consider an index as in (\ref{index2}), with $n=\mathbf
i_0$.

(a) If $\{\mathbf i_l\}\cap\{\mathbf j_{l^{\prime}}\}$ is null,
then $\mathbf c_{(n,\mathbf i),\mathbf j}=c_{\mathbf i,\mathbf
j}$.

(b) In general
$$\mathbf c_{(n,\mathbf i),\mathbf j}=\sum c_{i,j},$$
where the sum is over all subindexing sets of $(n,\mathbf
i,\mathbf j)$, resulting from cancellation of pairs $\mathbf
i_l=\mathbf j_{l^{\prime}}$, which satisfy the constraints in
Proposition \ref{positivity}.

(c) In particular for any indexing set $(i,j)$ as in Proposition
\ref{positivity}, $c_{i,j}\le \mathbf c_{(n,i),j}$.
\end{lemma}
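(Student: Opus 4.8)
The plan is to derive Lemma~\ref{formula5} by matching the two competing formulas for $x_1^*$: the ``signed'' formula of Lemma~\ref{formula1} (equivalently, its residue-of-$-\gamma^*\delta^{-1}$ origin) and the ``positive'' formula of Proposition~\ref{positivity}, after both have been organized by the leading index $n = \mathbf{i}_0$. Concretely, equation~(\ref{formula3}) asserts the identity of two polynomials in the $\zeta_k, \bar\zeta_k$ (for $k \ge n$), one side being $\sum \mathbf{c}_{(n,\mathbf{i}),\mathbf{j}} \zeta_{\mathbf{i}_1}\bar\zeta_{\mathbf{j}_1}\cdots$ and the other being $\bigl(\prod_{k>n}(1+|\zeta_k|^2)\bigr)\sum c_{i,j}\zeta_{i_1}\bar\zeta_{j_1}\cdots$. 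The strategy is to expand the product $\prod_{k>n}(1+|\zeta_k|^2)$ on the right, which inserts some number of matched pairs $\zeta_k\bar\zeta_k$ into each monomial $\zeta_{i_1}\bar\zeta_{j_1}\cdots\zeta_{i_r}\bar\zeta_{j_r}$ indexed as in Proposition~\ref{positivity}, and then to collect, on the left, all monomials $\zeta_{\mathbf{i}_1}\bar\zeta_{\mathbf{j}_1}\cdots\zeta_{\mathbf{i}_L}\bar\zeta_{\mathbf{j}_L}$ that equal a given right-hand monomial. Comparing coefficients of each such monomial yields exactly the claimed formula $\mathbf{c}_{(n,\mathbf{i}),\mathbf{j}} = \sum c_{i,j}$.

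The key steps, in order, are as follows. First I would make precise the bookkeeping: given a monomial on the left, indexed by $n \le \mathbf{i}_1 \le \cdots \le \mathbf{i}_L$ and $\mathbf{j}_1 \le \cdots \le \mathbf{j}_L$ subject to (\ref{index2}), a ``subindexing set'' is what remains after deleting some collection of positions $\ell, \ell'$ with $\mathbf{i}_\ell = \mathbf{j}_{\ell'}$ in matched pairs; each such deletion corresponds to one factor $|\zeta_k|^2$ pulled out of the product. Second, I would observe that on the right-hand side of (\ref{formula3}), the coefficient of a fixed monomial is precisely $\sum c_{i,j}$ over those sub-monomials $(i,j)$ (satisfying the Proposition~\ref{positivity} constraints, in particular $\sum i - \sum j = n$ and $n \le i_1$) whose complement in the fixed monomial consists of pairs $\zeta_k\bar\zeta_k$ with $k > n$; this is just the multinomial expansion of $\prod_{k>n}(1+|\zeta_k|^2)$. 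Third, since (\ref{formula3}) is a genuine polynomial identity — it is (\ref{formula2}) read off from (\ref{formula1}), both of which have already been established — the coefficient of that monomial on the left, namely $\mathbf{c}_{(n,\mathbf{i}),\mathbf{j}}$, must equal this sum. This gives part (b) directly, part (a) as the special case where the $\mathbf{i}$'s and $\mathbf{j}$'s are disjoint (no pairs to cancel, so the only sub-monomial is the whole thing), and part (c) because the full index $(i,j) = (\mathbf{i},\mathbf{j})$ itself always appears among the sub-monomials when it satisfies the Proposition~\ref{positivity} constraints, and all $c_{i,j}$ are nonnegative (indeed positive integers) by Proposition~\ref{positivity}.

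One technical point deserving care is checking that the constraint $\mathbf{i}_1 \le \mathbf{j}_1, \ldots, \mathbf{i}_L \le \mathbf{j}_L$ assumed in (\ref{index2}) — which, as the text emphasizes, is \emph{not} a consequence of the cluster-decomposition combinatorics and is justified only via Proposition~\ref{positivity} — is compatible with the pair-cancellation operation: deleting a matched pair $\mathbf{i}_\ell = \mathbf{j}_{\ell'}$ from an index satisfying (\ref{index2}) and the strict inequalities $\mathbf{i}_{l-1} < \mathbf{j}_l$ must again yield an index satisfying the (renumbered) Proposition~\ref{positivity} constraints, and conversely every sub-monomial arising in the multinomial expansion on the right must be of this form. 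I would verify this directly from the interlacing inequalities: removing an equal pair cannot destroy the non-strict inequalities $i_k \le j_k$ after renumbering, and the parity/degree condition $\sum i - \sum j = n$ is preserved because a deleted pair contributes $k - k = 0$.

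The main obstacle I anticipate is precisely this last compatibility check — ensuring that the correspondence ``monomial on the left $\leftrightarrow$ (sub-monomial, choice of cancelled pairs)'' is a genuine bijection onto the index sets of Proposition~\ref{positivity}, with no spurious contributions and no omissions, after the renumbering forced by deletion. Everything else is a direct term-by-term comparison of coefficients in an already-proved polynomial identity. Since the strict-inequality structure (Lemma~\ref{strict}) and the delicate inequality (\ref{inequal}) have both been installed before this point, the comparison should go through, but the renumbering after deletion of interior pairs is the step where one must be most careful not to conflate the constraint $\mathbf{i}_l \le \mathbf{j}_l$ with the strict constraint $\mathbf{i}_{l-1} < \mathbf{j}_l$.
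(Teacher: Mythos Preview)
Your proposal is correct and follows essentially the same approach as the paper: the lemma is deduced directly from the polynomial identity (\ref{formula3}) by expanding the product $\prod_{k>n}(1+\vert\zeta_k\vert^2)$ and comparing coefficients monomial by monomial, which is exactly what the paper does in the sentence preceding the lemma statement. Your additional care about verifying that pair-cancellation preserves the index constraints is more thorough than the paper, which simply asserts the conclusion without this check.
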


\begin{example} To clarify (b), given an indexing set such as
$$\begin{matrix} &5&6&7\\
3&4&5&6\end{matrix} $$ there are three proper subindexing sets,
$$\begin{matrix} &6&7\\
3&4&6\end{matrix} \quad\quad \begin{matrix} &5&7\\
3&4&5\end{matrix} \quad\quad \begin{matrix} &7\\
3&4\end{matrix} $$
\end{example}

Part (a) of Lemma \ref{formula5}, and Lemma \ref{formula1}, yield
an expression for a generic $c_{i,j}$, where generic is defined by
the null intersection condition in (a). Using this formula it is
possible to write ``most'' of the terms in $s_{n,r}$ in
Proposition \ref{positivity} in terms of products  of the
Hermitian expressions
$$b_n(m)=\zeta_n \bar{\zeta}_{n+m}+\zeta_{n+1}
\bar{\zeta}_{n+1+m}+..$$ These expressions can be estimated using
Cauchy-Schwarz, and they are also easy to understand in
probabilistic contexts. Unfortunately I do not know how to
systematically estimate nongeneric terms.

\begin{example}
$$s_2=s_{2,1}=b_2(1)+b_3(1)$$
and in general
$$s_{n,1}=b_n(n-1)+b_{n+1}(n-1)$$
$s_{3,2}$ is a quadratic expression in terms of the variables
$\zeta_3\bar{\zeta_4}$, $\zeta_4\bar{\zeta_5}$,.. The matrix is
$$\begin{matrix}1&3&2&2&2&..\\
&3&6&4&4&4&..\\&&3&6&4&4&4&..\\&&&3&6&4&4&4&..\end{matrix}$$
Therefore $$s_{3,2}=b_3(1)^2+b_4(1)^2+\sum_{i\ge 4}\zeta_i
\bar{\zeta}_{i+1}\zeta_i \bar{\zeta}_{i+1}+\zeta_3
\bar{\zeta}_{4}\zeta_4 \bar{\zeta}_{5}+2\sum_{i\ge 4}\zeta_i
\bar{\zeta}_{i+1}\zeta_{i+1} \bar{\zeta}_{i+2}$$ Thus ``most" of
$s_{3,2}$ can be written in terms of powers of Hermitian
expressions, and two ``diagonal" sums near the boundary of the
cone that we are adding over.
\end{example}

\subsection{$\zeta$ in terms of $x$}

We have $\zeta_n=\zeta_1(x_n,x_{n+1},..)$, and for a finite number
of variables, one can generate formulas for $\zeta_1$. For
example, if $p_n=\prod_{j>n}(1+\vert \zeta_j \vert^2)$, then
$$\zeta_1(x_1,x_2,x_3,x_4)=
=\frac 1{p_1}x_1-\frac 1{p_1p_2p_3}x_2^2\bar {x}_3+2\frac 1{p_1
p_2p_3^2p_4}x_2x_3^2\bar {x}_3\bar {x}_4-2\frac 1{p_1p_3p_4}x_2x_
3\bar {x}_4$$
$$-\frac 1{p_1p_2p_3^3p_4^2}x_3^4\bar {x}_3\bar {x}_4^2+\frac 1{p_
1p_3^2p_4^2}x_3^3\bar {x}_4^2,$$ where the $p_i$ can be expressed
in terms of $x$ using the displayed line following (6.10) in
\cite{P}. But I have not made any progress toward finding a
general formula.


\begin{thebibliography}{99}

\bibitem{Br} Brezis, H., New questions related to the topological
degree, in The Unity of Mathematics, in Honor of the Ninetieth
Birthday of I.M. Gelfand, Birkhauser (2006) 137-154.

\bibitem{CG} Clancey, K., and Gohberg, I., Factorization of Matrix
Functions and Singular Integral Operators, Birkhauser (1981).

\bibitem{Pe} Peller, V., Hankel Operators and Their Applications,
Springer (2003).

\bibitem{P} Pickrell, D., Homogeneous Poisson structures on
loop spaces of symmetric spaces, Symmetry Integrability Geom.
Methods Appls. 4 (2008) Paper 069.

\bibitem{PP} ------, and Polletta, B., Unitary loop groups
and factorization, in preparation.

\bibitem{PS} Pressley, A., Segal, G., Loop groups, Oxford Mathematical Monographs,
Oxford Science Publications, Oxford University Press, New York
(1986).

\bibitem{W} Widom, H., Asymptotic behavior of block Toeplitz
matrices and determinants. II, Adv. Math. 21 (1976) 1-29.

\end{thebibliography}
\end{document}